\newtheorem{proposition}{Proposition}[section]
\newtheorem{lemma}[proposition]{Lemma}
\newtheorem{corollary}[proposition]{Corollary}
\newtheorem{theorem}[proposition]{Theorem}
\theoremstyle{definition}
\newcommand{\selabel}[1]{\label{se:#1}}
\newcommand{\seref}[1]{Section~\ref{se:#1}}
\def\<{\leqslant}
\def\>{\geqslant}
\def\a{\alpha}
\def\b{\beta}
\def\d{\delta}
\def\ol{\overline}
\def\t{\triangle}
\def\e{\varepsilon}
\def\s{\sigma}
\def\ot{\otimes}
\def\ra{\rightarrow}
\date{}
\begin{document}
\title[The Projective Class Rings of a family of pointed Hopf algebras]
{The Projective Class Rings of a family of pointed Hopf algebras of Rank two}
\author{Hui-Xiang Chen}
\address{School of Mathematical Science, Yangzhou University,
Yangzhou 225002, China}
\email{hxchen@yzu.edu.cn}
\author{Hassan Suleman Esmael Mohammed}
\address{School of Mathematical Science, Yangzhou University,
Yangzhou 225002, China}
\email{esmailhassan313@yahoo.com}
\author{Weijun Lin}
\address{School of Mathematical Science, Yangzhou University,
Yangzhou 225002, China}
\email{wjlin@yzu.edu.cn}
\author{Hua Sun}
\address{School of Mathematical Science, Yangzhou University,
Yangzhou 225002, China}
\email{997749901@qq.com}
\thanks{2010 {\it Mathematics Subject Classification}. 16G60, 16T05}
\keywords{Green ring, indecomposable module, Taft algebra}
\begin{abstract}
In this paper, we compute the projective class rings of the tensor product
$\mathcal{H}_n(q)=A_n(q)\ot A_n(q^{-1})$ of Taft algebras $A_n(q)$ and $A_n(q^{-1})$, and its
cocycle deformations $H_n(0,q)$ and $H_n(1,q)$, where $n>2$ is a positive integer
and $q$ is a primitive $n$-th root of unity. It is shown that the projective class rings $r_p(\mathcal{H}_n(q))$,
$r_p(H_n(0,q))$ and $r_p(H_n(1,q))$ are commutative rings generated by three elements, three elements and
two elements subject to some relations, respectively.
It turns out that even $\mathcal{H}_n(q)$, $H_n(0,q)$ and $H_n(1,q)$
are cocycle twist-equivalent to each other, they are of different representation types: wild, wild and tame,
respectively.
\end{abstract}
\maketitle

\section{\bf Introduction}\selabel{1}

Let $H$ be a Hopf algebra over a field $\mathbb K$. Doi \cite{Doi} introduced a cocycle twisted Hopf
algebra $H^{\s}$ for a convolution invertible 2-cocycle $\s$ on $H$. It is shown in \cite{DoiTak, Maj92} that
the Drinfeld double $D(H)$ is a cocycle twisting of the tensor product Hopf algebra
$H^{*cop}\ot H$. The 2-cocycle twisting is extensively employed in various
researches. For instance, Andruskiewitsch et al. \cite{AndrFanGarVen} considered the twists of Nichols algebras
associated to racks and cocycles. Guillot, Kassel and Masuoka \cite{GuiKasMas} obtained some examples
by twisting comodule algebras by 2-cocycles.
It is well known that the monoidal category $\mathcal{M}^H$ of right
$H$-comodules is equivalent to the monoidal category $\mathcal{M}^{H^{\s}}$ of right
$H^{\s}$-comodules. On the other hand, we know that the braided monoidal category
$_H{\mathcal YD}^H$ of Yetter-Drinfeld $H$-modules is the center of the monoidal
category $\mathcal{M}^H$ for any Hopf algebra $H$ (e.g., see \cite{Ka}).
Hence the monoidal equivalence from $\mathcal{M}^H$ to $\mathcal{M}^{H^{\s}}$
gives rise to a braided monoidal equivalence
from $_H{\mathcal YD}^H$ to $_{H^{\s}}{\mathcal YD}^{H^{\s}}$.
Chen and Zhang \cite{ChenZhang} described a braided monoidal equivalent functor from
$_H{\mathcal YD}^H$ to $_{H^{\s}}{\mathcal YD}^{H^{\s}}$.
Benkart et al. \cite{BenkPerWith} used a result of Majid and Oeckl \cite{MajOec}
to give a category equivalence between Yetter-Drinfeld modules for a finite-dimensional
pointed Hopf algebra $H$ and those for its cocycle twisting $H^{\s}$.
However, the Yetter-Drinfeld module category $_H{\mathcal YD}^H$
is also the center of the monoidal category $_H\mathcal M$ of left $H$-modules.
This gives rise to a natural question:

Is there any relations between the two monoidal
categories $_H\mathcal M$ and $_{H^{\s}}\mathcal M$ of left modules over two cocycle twist-equivalent Hopf algebras
$H$ and $H^{\s}$? or how to detect the two monoidal categories $_H\mathcal M$ and $_{H^{\s}}\mathcal M$?

This article seeks to address this question through investigating the representation types and
projective class rings of a family of pointed Hopf algebras of rank 2, the tensor products of two Taft
algebras, and their two cocycle deformations.

In the investigation of the monoidal category of modules over a Hopf algebra $H$,
the decomposition problem of tensor products of indecomposables is of most importance
and has received enormous attentions. Our approach is to explore the representation type
of $H$ and the projective class ring of $H$, which is a subring of
the representation ring (or Green ring) of $H$. Originally, the concept of the Green ring $r(H)$
stems from the modular representations of finite groups (see \cite{Green}, etc.)
Since then, there have been plenty of works on the Green rings.
For finite-dimensional group algebras, one can refer to \cite{Archer, BenCar, BenPar, BrJoh, HTW}.
For Hopf algebras and quantum groups, one can see \cite{ChVOZh, Chin, Cib, LiZhang, Wakui, With}.

The $n^4$-dimensional Hopf algebra $H_n(p, q)$ was introduced in \cite{Ch1},
where $n\>2$ is an integer, $q\in\mathbb K$ is a primitive $n$-th root of unity and $p\in\mathbb K$.
If $p\neq 0$, then $H_n(p, q)$ is isomorphic to the Drinfeld double $D(A_n(q^{-1}))$ of the Taft
algebra $A_n(q^{-1})$. In particular, we have $H_n(p, q)\cong H_n(1, q)\cong D(A_n(q^{-1}))$ for any $p\neq 0$.
Moreover, $H_n(p,q)$ is a cocycle deformation of $A_n(q)\ot A_n(q^{-1})$.
For the details, the reader is directed to \cite{Ch1, Ch2}.
When $n=2$ ($q=-1$), $A_2(-1)$ is exactly the Sweedler 4-dimensional Hopf algebra $H_4$.
Chen studied the finite dimensional representations of $H_n(1,q)$ in \cite{Ch2, Ch4},
and the Green ring $r(D(H_4))$ in \cite{Ch5}.
Using a different method, Li and Hu \cite{LiHu} also studied the finite dimensional representations of
the Drinfeld double $D(H_4)$, the Green ring $r(D(H_4))$ and the projective class ring $p(D(H_4))$.
They also studied two Hopf algebras which are cocycle deformations of $D(H_4)$.
By \cite{Ch4}, one knows that $D(H_4)$ is of tame representation type.
By \cite{LiHu}, the two cocycle deformations of $D(H_4)$ are also of tame representation type.

In this paper, we study the three cocycle twist-equivalent Hopf algebras
$\mathcal{H}_n(q)=A_n(q)\ot A_n(q^{-1})$, $H_n(0,q)$ and $H_n(1,q)$ by investigating their
representation types and projective class rings, where $n\>3$.
In \seref{2}, we introduce the Taft algebras $A_n(q)$, the tensor product
$\mathcal{H}_n(q)=A_n(q)\ot A_n(q^{-1})$ and the Hopf algebras $H_n(p,q)$.
In \seref{3}, we first show that $\mathcal{H}_n(q)$ is of wild representation type.
With a complete set of orthogonal primitive idempotents, we classify the simple modules and indecomposable
projective modules over $\mathcal{H}_n(q)$, and decompose the tensor products of these modules.
This leads the description of the projective class ring $r_p(\mathcal{H}_n(q))$,
the Jacobson radical $J(R_p(\mathcal{H}_n(q)))$ of the projective class algebra $R_p(\mathcal{H}_n(q))$
and the quotient algebra $R_p(\mathcal{H}_n(q))/J(R_p(\mathcal{H}_n(q)))$.
In \seref{4}, we first show that $H_n(0,q)$ is a symmetric algebra of wild representation type.
Then we give a complete set of orthogonal primitive idempotents with the Gabriel quiver,
and classify the simple modules and indecomposable projective modules over $H_n(0,q)$.
We also describe the projective class ring $r_p(H_n(0,q))$, the Jacobson radical $J(R_p(H_n(0,q)))$
of the projective class algebra $R_p(H_n(0,q))$ and the quotient algebra  $R_p(H_n(0,q))/J(R_p(H_n(0,q)))$.
In \seref{5}, using the decompositions of tensor products of indecomposables over $H_n(1,q)$ given
in \cite{ChenHassenSun}, we describe the structure of the projective class ring $r_p(H_n(1,q))$.
It is interesting to notice that even the Hopf algebras $\mathcal{H}_n(q)$, $H_n(0,q)$ and $H_n(1,q)$
are cocycle twist-equivalent to each other, they own the different number of blocks
with 1, $n$ and $\frac{n(n+1)}{2}$, respectively  (see \cite[Corollary 2.7]{Ch4} for $H_n(1,q)$).
$\mathcal{H}_n(q)$ and $H_n(0,q)$ are basic algebras of wild representation type, but $H_n(1,q)$ is not
basic and is of tame representation type. $H_n(0,q)$ and $H_n(1,q)$ are symmetric algebras, but $\mathcal{H}_n(q)$
is not.

\section{\bf Preliminaries}\selabel{2}

Throughout, we work over an algebraically closed field $\mathbb K$.
Unless otherwise stated, all algebras, Hopf algebras and modules are
defined over $\mathbb K$; all modules are left modules and finite dimensional;
all maps are $\mathbb K$-linear; dim and $\otimes$ stand for ${\rm dim}_{\mathbb K}$ and
$\ot_{\mathbb K}$, respectively. Given an algebra $A$, $A$-mod denotes the category of
finite-dimensional $A$-modules. For any $A$-module $M$ and nonnegative integer $l$,
let $lM$ denote the direct sum of $l$ copies of $M$. For the theory of Hopf algebras and quantum groups, we
refer to \cite{Ka, Maj, Mon, Sw}.
Let $\mathbb Z$ denote all integers, and ${\mathbb Z}_n={\mathbb Z}/n{\mathbb Z}$.

Let $H$ be a Hopf algebra. The Green ring $r(H)$ of $H$
can be defined as follows. $r(H)$ is the abelian group generated by the
isomorphism classes $[M]$ of $M$ in $H$-mod
modulo the relations $[M\oplus V]=[M]+[V]$. The multiplication of $r(H)$
is given by the tensor product of $H$-modules, that is,
$[M][V]=[M\ot V]$. Then $r(H)$ is an associative ring.
The projective class ring $r_p(H)$ of $H$ is the subring of $r(H)$ generated by
projective modules and simple modules (see \cite{Cib99}).
Then the Green algebra $R(H)$ and projective algebra $R_p(H)$ are associative $\mathbb K$-algebras
defined by $R(H):=\mathbb{K}\ot_{\mathbb Z}r(H)$ and $R_p(H):=\mathbb{K}\ot_{\mathbb Z}r_p(H)$, respectively.
Note that $r(H)$ is a free abelian group with a $\mathbb Z$-basis
$\{[V]|V\in{\rm ind}(H)\}$, where ${\rm ind}(H)$ denotes the category
of finite dimensional indecomposable $H$-modules.

The Grothendieck ring $G_0(H)$ of $H$ is defined similarly.
$G_0(H)$ is the abelian group generated by the isomorphism classes $[M]$ of $M$ in $H$-mod
modulo the relations $[M]=[N]+[V]$ for any short exact sequence $0\ra N\ra M\ra V\ra 0$ in $H$-mod.
The multiplication of $G_0(H)$ is given by the tensor product of $H$-modules, that is,
$[M][V]=[M\ot V]$. Then $G_0(H)$ is also an associative ring.
Moreover, there is a canonical ring epimorphism from $r(H)$ onto $G_0(H)$.

Let $n\>2$ be an integer and $q\in\mathbb K$ a primitive $n$-th root of unity.
Then the $n^2$-dimensional Taft Hopf algebra $A_n(q)$ is defined as follows
(see \cite{Ta}): as an algebra, $A_n(q)$ is generated by $g$ and $x$ with relations
$$g^n=1,\ x^n=0, \ xg=qgx.$$
The coalgebra structure and antipode are given by
$$\begin{array}{c}
\triangle (g)=g\otimes g,\ \triangle (x)=x\otimes g+1\otimes x,\
\varepsilon (g)=1,\ \varepsilon (x)=0,\\
S(g)=g^{-1}=g^{n-1},\ S(x)=-xg^{-1}=-q^{-1}g^{n-1}x.\\
\end{array}$$
Since $q^{-1}$ is also a primitive $n$-th root of unity,
one can define another Taft Hopf algebra $A_n(q^{-1})$, which is generated, as an algebra,
by $g_1$ and $x_1$ with relations $g_1^n=1$, $x_1^n=0$ and $x_1g_1=q^{-1}g_1x_1$. The coalgebra structure
and antipode are given similarly to $A_n(q)$.
Then $A_n(q^{-1})\cong A_n(q)^{\rm op}$ as Hopf algebras.

The first author Chen introduced a Hopf algebra $H_n(p,q)$ in \cite{Ch1}, where $p, q\in\mathbb K$ and
$q$ is a primitive $n$-th root of unity. It was shown there that $H_n(p,q)$ is isomorphic to
a cocycle deformation of the tensor product $A_n(q)\ot A_n(q^{-1})$.

The tensor product $A_n(q)\ot A_n(q^{-1})$ can be described as follows.
Let $\mathcal{H}_n(q)$ be the algebra generated by $a, b, c$ and $d$ subject to the relations:
$$\begin{array}{lllll}
ba=qab,& db=bd, & ca=ac,& dc=qcd,& cb=bc,\\
a^n=0, & b^n=1, &c^n=1,& d^n=0, & da=ad.
\end{array}$$
Then $\mathcal{H}_n(q)$ is a Hopf algebra with the coalgebra structure and antipode given by
$$\begin{array}{lll}
\t(a)=a\otimes b+1\otimes a, & \e(a)=0, & S(a)=-ab^{-1}=-ab^{n-1},\\
\t(b)=b\otimes b, & \e(b)=1, & S(b)=b^{-1}=b^{n-1},\\
\t(c)=c\otimes c,& \e(c)=1, & S(c)=c^{-1}=c^{n-1},\\
\t(d)=d\otimes c+1\otimes d,& \e(d)=0, & S(d)=-dc^{-1}=-dc^{n-1}.
\end{array}$$
It is straightforward to verify that there is a Hopf algebra isomorphism from $\mathcal{H}_n(q)$ to
$A_n(q)\ot A_n(q^{-1})$ via $a\mapsto 1\ot x_1$, $b\mapsto 1\ot g_1$, $c\mapsto g\ot 1$ and $d\mapsto x\ot 1$.
Obviously, $\mathcal{H}_n(q)$ is $n^4$-dimensional with a $\mathbb K$-basis $\{a^ib^jc^ld^k|0\<i, j, l, k\<n-1\}$.

Let $p\in\mathbb K$. Then one can define another $n^4$-dimensional Hopf algebra $H_n(p, q)$, which is
generated as an algebra by $a, b, c$ and $d$ subject to the relations:
$$\begin{array}{lllll}
ba=qab,& db=qbd, & ca=qac,& dc=qcd,& bc=cb,\\
a^n=0, & b^n=1, &c^n=1,& d^n=0, & da-qad=p(1-bc).
\end{array}$$
The coalgebra structure and antipode are defined in the same way as $\mathcal{H}_n(q)$ before.
$H_n(p, q)$ has a $\mathbb K$-basis $\{a^ib^jc^ld^k|0\<i, j, l, k\<n-1\}$.
When $p\neq 0$, $H_n(p, q)\cong H_n(1, q)\cong D(A_n(q^{-1}))$ (see \cite{Ch1, Ch2}).
If $n=2$ ($q=-1$), then $H_2(1, -1)\cong D(H_4)$, and
$H_2(0,-1)$ is exactly the Hopf algebra $\ol{\mathcal A}$ in \cite{LiHu}.

By \cite[Lemma 3.2]{Ch1}, there is an invertible skew-pairing
$\tau_p: A_n(q)\ot A_n(q^{-1})\ra \mathbb K$ given by
$\tau_p(g^ix^j, x_1^kg_1^l)=\d_{jk}p^jq^{il}(j)!_q$, $0\<i,j,k,l<n$.
Hence one can form a double crossproduct $A_n(q)\bowtie_{\tau_p}A_n(q^{-1})$.
Moreover, $A_n(q)\bowtie_{\tau_p}A_n(q^{-1})$ is isomorphic to $H_n(p,q)$
as a Hopf algebra (see \cite[Theorem 3.3]{Ch1}). By \cite{DoiTak}, $\tau_p$ induces
an invertible 2-cocycle $[\tau_p]$ on $A_n(q)\ot A_n(q^{-1})$ such that
$A_n(q)\bowtie_{\tau_p}A_n(q^{-1})=(A_n(q)\ot A_n(q^{-1}))^{[\tau_p]}$.
Thus, there is a corresponding invertible 2-cocycle $\s_p$ on $\mathcal{H}_n(q)$
such that $\mathcal{H}_n(q)^{\s_p}\cong H_n(p,q)$ as Hopf algebras.
In particular, we have $\mathcal{H}_n(q)^{\s_0}\cong H_n(0,q)$ and $\mathcal{H}_n(q)^{\s_1}\cong H_n(1,q)$.
In general, if $\s$ is a convolution invertible 2-cocycle on a Hopf algebra $H$,
then $\s^{-1}$ is an invertible 2-cocycle on $H^{\s}$
and $(H^{\s})^{\s^{-1}}=H$ (see \cite[Lemma 1.2]{chen99}).
More generally, if $\s$ is an invertible 2-cocycle on $H$ and $\tau$ is an invertible 2-cocycle on $H^{\s}$,
then $\tau*\s$ is an invertible 2-cocycle on $H$ and $H^{\tau*\s}=(H^{\s})^{\tau}$ (see \cite[Lemma 1.4]{chen99}).
Thus, the Hopf algebras $\mathcal{H}_n(q)$, $H_n(0,q)$ and $H_n(1,q)$ are
cocycle twist-equivalent to each other.

Throughout the following, fix an integer $n>2$ and let $q\in\mathbb K$ be
a primitive $n$-th root of unity. For any $m\in\mathbb Z$, denote still by $m$ the image of $m$ under the canonical
projection $\mathbb{Z}\ra\mathbb{Z}_n=\mathbb{Z}/n\mathbb{Z}$.

\section{\bf The Projective Class Ring of $\mathcal{H}_n(q)$}\selabel{3}

In this section, we investigate the representations and the projective class ring of
$\mathcal{H}_n(q)$, or equivalently, of $A_n(q)\ot A_n(q^{-1})$.

Let $A$ be the subalgebra of $\mathcal {H}_n(q)$ generated by $a$ and $d$.
Then $A$ is isomorphic to the quotient algebra $\mathbb{K}[x, y]/(x^n, y^n)$
of the polynomial algebra $\mathbb{K}[x, y]$ modulo the ideal
$(x^n,y^n)$ generated by $x^n$ and $y^n$.
Let $G=G(\mathcal{H}_n(q))$ be the group of group-like elements of $\mathcal{H}_n(q)$.
Then $G=\{b^ic^j|i,j\in\mathbb{Z}_n\}\cong \mathbb{Z}_n\times\mathbb{Z}_n$,
and $\mathbb{K}G=\mathcal{H}_n(q)_0$, the coradical of $\mathcal{H}_n(q)$.
Clearly, $A$ is a left $\mathbb{K}G$-module algebra with the action given by
$b\cdot a=qa$, $b\cdot d=d$, $c\cdot a=a$ and $c\cdot d=q^{-1}d$.
Hence one can form a smash product algebra $A\#\mathbb{K}G$. It is easy to see that
$\mathcal{H}_n(q)$ is isomorphic to $A\#\mathbb{K}G$ as an algebra.
Since $n\>3$, it follows from \cite[p.295(3.4)]{Ringel} that $A$
is of wild representation type. Since char$(\mathbb{K})\nmid |G|$,
$\mathbb{K}G$ is a semisimple and cosemisimple Hopf algebra. It follows from
\cite[Theorem 4.5]{Liu} that $A\#\mathbb{K}G$ is of wild representation type.
As a consequence, we obtain the following result.

\begin{proposition}\label{3.1}
$\mathcal{H}_n(q)$ is of wild representation type.
\end{proposition}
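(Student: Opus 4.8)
The plan is to forget the coalgebra structure entirely and treat $\mathcal{H}_n(q)$ purely as an algebra, since the representation type is an algebra invariant. The strategy has two parts: realise $\mathcal{H}_n(q)$ as a smash product $A\#\mathbb{K}G$ of a commutative local algebra $A$ with the group algebra of a finite abelian group $G$, and then argue that, because $\mathbb{K}G$ is semisimple, the representation type of $A\#\mathbb{K}G$ agrees with that of $A$. Everything then reduces to the representation type of $A$ itself.

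First I would make the identifications precise. The subalgebra $A=\langle a,d\rangle$ satisfies $a^n=d^n=0$ and $da=ad$, so sending $a\mapsto x$, $d\mapsto y$ gives an algebra isomorphism $A\cong\mathbb{K}[x,y]/(x^n,y^n)$. The group-likes satisfy $b^n=c^n=1$ and $bc=cb$, so $\mathbb{K}G\cong\mathbb{K}[\mathbb{Z}_n\ti\mathbb{Z}_n]$. The remaining relations $ba=qab$, $ca=ac$, $db=bd$, $dc=qcd$ are precisely the commutation rules encoding the $\mathbb{K}G$-module-algebra action $b\cdot a=qa$, $b\cdot d=d$, $c\cdot a=a$, $c\cdot d=q^{-1}d$ recorded above; hence the multiplication of $\mathcal{H}_n(q)$ coincides with the smash-product multiplication, and a dimension count $n^2\cdot n^2=n^4$ confirms $\mathcal{H}_n(q)\cong A\#\mathbb{K}G$ as algebras. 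At this point I would invoke Ringel's classification of the algebras $\mathbb{K}[x,y]/(x^n,y^n)$: for $n\>3$ this algebra is of wild representation type. It is exactly the hypothesis $n>2$ that enters here, since for $n=2$ the algebra is tame; this supplies the wildness of $A$.

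The main step, and the point where the real work lies, is transferring wildness from $A$ up to $A\#\mathbb{K}G$. Since $\mathrm{char}(\mathbb{K})\nmid|G|=n^2$, the Hopf algebra $\mathbb{K}G$ is semisimple, and this is precisely what makes the extension $A\hookrightarrow A\#\mathbb{K}G$ separable: the restriction functor $\mathrm{Res}$ and the induction functor $\mathrm{Ind}=(A\#\mathbb{K}G)\ot_A(-)$ form a pair of separable functors. The key structural fact is that for any $A$-module $M$ one has $\mathrm{Res}\,\mathrm{Ind}(M)\cong\bigoplus_{g\in G}{}^gM$, a direct sum of the $G$-twists of $M$ that in particular contains $M$ as a direct summand. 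Running a Clifford/covering argument on a controlling wild family in $A$-mod, this summand property lets one show that the induced modules remain, up to the finite $G$-action, pairwise non-isomorphic and that their indecomposability is controlled, so that a two-parameter wild family for $A$ produces a wild family for $A\#\mathbb{K}G$. This is exactly the content packaged by Liu's Theorem~4.5, which asserts that $A$ and $A\#\mathbb{K}G$ share the same representation type when $\mathbb{K}G$ is semisimple. The delicate part is ensuring that the controlling family does not collapse under induction, which is precisely what separability of $\mathbb{K}G$ secures. Combining the two parts yields that $\mathcal{H}_n(q)$ is of wild representation type.
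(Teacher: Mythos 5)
Your proof is correct and follows essentially the same route as the paper: the identification $\mathcal{H}_n(q)\cong A\#\mathbb{K}G$ with $A\cong\mathbb{K}[x,y]/(x^n,y^n)$, wildness of $A$ via Ringel, and transfer of representation type through Liu's Theorem 4.5 using semisimplicity of $\mathbb{K}G$. Your added discussion of separable functors and induction merely unpacks what the cited theorem of Liu already provides, so there is nothing to change.
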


$\mathcal{H}_n(q)$ has $n^2$ orthogonal primitive idempotents
$$\begin{array}{c}
e_{i,j}=\frac{1}{n^2}\sum_{k,l\in\mathbb{Z}_n}q^{-ik-jl}b^kc^l
=\frac{1}{n^2}\sum_{k,l=0}^{n-1}q^{-ik-jl}b^kc^l,\ \ i,j\in\mathbb{Z}_n.\\
\end{array}$$

\begin{lemma}\label{3.2}
Let $i,j\in\mathbb{Z}_n$. Then
$$be_{i,j}=q^ie_{i,j}, \ ce_{i,j}=q^je_{i,j},\
ae_{i,j}=e_{i+1,j}a, \ de_{i,j}=e_{i,j-1}d.$$
\end{lemma}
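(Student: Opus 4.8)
The plan is to verify each of the four identities by direct computation, exploiting the fact that each $e_{i,j}$ is an explicit average of the group-like elements $b^kc^l$, together with the defining commutation relations of $\mathcal{H}_n(q)$. The key observation driving everything is that $b$ and $c$ act diagonally on the idempotents: since $e_{i,j}=\frac{1}{n^2}\sum_{k,l}q^{-ik-jl}b^kc^l$, multiplying on the left by $b$ shifts the index $k\mapsto k+1$ in each summand, and reindexing the sum recovers $q^i e_{i,j}$. Concretely, $be_{i,j}=\frac{1}{n^2}\sum_{k,l}q^{-ik-jl}b^{k+1}c^l=\frac{1}{n^2}\sum_{k',l}q^{-i(k'-1)-jl}b^{k'}c^l=q^i e_{i,j}$, where the exponent shift pulls out the factor $q^i$. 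The identity $ce_{i,j}=q^je_{i,j}$ is entirely symmetric, using that $c$ commutes with $b$ (relation $cb=bc$) so that $c$ simply shifts $l\mapsto l+1$.

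For the two remaining identities I would compute $ae_{i,j}$ and $de_{i,j}$ by commuting $a$ (resp.\ $d$) past every $b^kc^l$ in the sum. The relevant relations are $ba=qab$ and $ca=ac$, which give $ab^k=q^{-k}b^ka$ and $ac^l=c^la$, hence $a(b^kc^l)=q^{-k}b^kc^l a$. Substituting into the definition of $e_{i,j}$ produces $ae_{i,j}=\frac{1}{n^2}\sum_{k,l}q^{-ik-jl}q^{-k}b^kc^l a=\frac{1}{n^2}\sum_{k,l}q^{-(i+1)k-jl}b^kc^l a=e_{i+1,j}a$, which is exactly the claimed formula. For $de_{i,j}$, the analogous relations $db=bd$ and $dc=qcd$ yield $d(b^kc^l)=q^{l}b^kc^l d$, so that the factor $q^{l}$ shifts the second index: $de_{i,j}=\frac{1}{n^2}\sum_{k,l}q^{-ik-jl}q^{l}b^kc^l d=\frac{1}{n^2}\sum_{k,l}q^{-ik-(j-1)l}b^kc^l d=e_{i,j-1}d$.

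There is no serious obstacle here; all four identities reduce to a single bookkeeping principle, namely that conjugating or multiplying by a generator reindexes the defining sum by a shift in exponents, and reindexing a complete sum over $\mathbb{Z}_n$ costs only a scalar or an index translation. The only point requiring mild care is the treatment of exponents modulo $n$: one must check that the reindexing substitutions $k\mapsto k\pm 1$ (and likewise for $l$) permute $\mathbb{Z}_n$ and that $q^n=1$ makes the scalar factors well-defined, but since $q$ is a primitive $n$-th root of unity and the sums range over a full set of residues, this is automatic. I would therefore present the computation for $be_{i,j}$ and $ae_{i,j}$ in full and remark that $ce_{i,j}$ and $de_{i,j}$ follow by the symmetric argument.
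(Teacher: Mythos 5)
Your computations are correct and complete: all four identities follow exactly as you describe from the relations $ba=qab$, $ca=ac$, $db=bd$, $dc=qcd$, $cb=bc$ together with reindexing the sums over $\mathbb{Z}_n$. This is precisely the "straightforward verification" the paper invokes, so your proposal matches the paper's (unwritten) argument.
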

\begin{proof}
It follows from a straightforward verification.
\end{proof}

For $i,j\in\mathbb{Z}_n$, let $S_{i,j}$ be the one dimensional $\mathcal{H}_n(q)$-module
defined by $bv=q^iv$, $cv=q^jv$ and $av=dv=0$, $v\in S_{i,j}$.
Let $P_{i,j}=P(S_{i,j})$ be the projective cover of $S_{i,j}$.
Let $J={\rm rad}(\mathcal{H}_n(q))$ be the Jacobson radical of $\mathcal{H}_n(q)$.

\begin{lemma}\label{3.3}
The simple modules $S_{i,j}$, $i,j\in\mathbb{Z}_n$, exhaust all simple modules of $\mathcal{H}_n(q)$,
and consequently, the projective modules $P_{i,j}$, $i,j\in\mathbb{Z}_n$, exhaust all
indecomposable projective modules of $\mathcal{H}_n(q)$. Moreover,
$P_{i,j}\cong\mathcal{H}_n(q)e_{i,j}$ for all $i,j\in\mathbb{Z}_n$.
\end{lemma}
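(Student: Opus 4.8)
The plan is to exploit the explicit smash product structure $\mathcal{H}_n(q)\cong A\#\mathbb{K}G$ together with the idempotent decomposition already set up. First I would verify that $\{e_{i,j}\mid i,j\in\mathbb{Z}_n\}$ is a complete set of orthogonal idempotents summing to $1$; this is routine from the orthogonality relations for characters of $G\cong\mathbb{Z}_n\times\mathbb{Z}_n$, since the $e_{i,j}$ are essentially the primitive idempotents of the group algebra $\mathbb{K}G$ extended into $\mathcal{H}_n(q)$. Because $\sum_{i,j}e_{i,j}=1$, we obtain a decomposition $\mathcal{H}_n(q)=\bigoplus_{i,j}\mathcal{H}_n(q)e_{i,j}$ of the regular module into projective left modules.

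Next I would show each $\mathcal{H}_n(q)e_{i,j}$ is indecomposable with simple top $S_{i,j}$, which identifies it as the projective cover $P_{i,j}=P(S_{i,j})$. The key computational input is \leref{3.2}: the relations $be_{i,j}=q^ie_{i,j}$ and $ce_{i,j}=q^je_{i,j}$ show that the group-likes act on $e_{i,j}$ (equivalently on the top of $\mathcal{H}_n(q)e_{i,j}$) by the scalars $q^i,q^j$, while $ae_{i,j}=e_{i+1,j}a$ and $de_{i,j}=e_{i,j-1}d$ show that $a$ and $d$ shift the idempotent index, hence map into $J\cdot\mathcal{H}_n(q)e_{i,j}$ since $a,d\in J$. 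Concretely, $\mathcal{H}_n(q)e_{i,j}$ has $\mathbb{K}$-basis $\{a^s d^t e_{i,j}\mid 0\le s,t\le n-1\}$ coming from the PBW-type basis $\{a^sb^kc^ld^t\}$ absorbed into the idempotent, so it is $n^2$-dimensional, and one reads off that its radical quotient is the one-dimensional module on which $b,c$ act by $q^i,q^j$ and $a,d$ act by zero — that is exactly $S_{i,j}$. Primitivity of $e_{i,j}$ (already asserted in the text) gives indecomposability of $\mathcal{H}_n(q)e_{i,j}$, whence $\mathcal{H}_n(q)e_{i,j}\cong P_{i,j}$.

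It then remains to argue that the $S_{i,j}$ exhaust the simple modules. Since $\mathcal{H}_n(q)/J$ is semisimple and $J$ is the augmentation-type ideal generated by $a,d$ together with $\{b^k-q^{?},\ldots\}$, I would show $J$ is precisely the ideal generated by $a$ and $d$: indeed $A=\mathbb{K}[x,y]/(x^n,y^n)$ is local with maximal ideal $(a,d)$ and $\mathbb{K}G$ is semisimple, so the radical of the smash product $A\#\mathbb{K}G$ is $\mathrm{rad}(A)\#\mathbb{K}G=(a,d)\#\mathbb{K}G$. Consequently $\mathcal{H}_n(q)/J\cong\mathbb{K}G$ is commutative semisimple of dimension $n^2$, so it has exactly $n^2$ one-dimensional simple modules, indexed by the characters of $G$; these are precisely the $S_{i,j}$ for $i,j\in\mathbb{Z}_n$. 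Counting then forces every simple module to be some $S_{i,j}$, and every indecomposable projective to be some $P_{i,j}$.

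The main obstacle I expect is the clean identification of $J$ with the smash product of $\mathrm{rad}(A)$ and $\mathbb{K}G$, and the accompanying verification that $\mathcal{H}_n(q)e_{i,j}$ genuinely has the claimed basis and hence the right dimension; once these are in hand, indecomposability is immediate from primitivity of $e_{i,j}$ and the rest is character-counting. A subtle point worth checking carefully is that distinct $(i,j)$ give non-isomorphic simples — this follows because $b,c$ act by the distinct scalar pairs $(q^i,q^j)$, and $q$ is a primitive $n$-th root of unity so these pairs are distinct as $(i,j)$ ranges over $\mathbb{Z}_n\times\mathbb{Z}_n$.
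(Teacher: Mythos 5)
Your proposal is correct, and its skeleton coincides with the paper's: everything turns on identifying the Jacobson radical $J$ with the two-sided ideal generated by $a$ and $d$, so that $\mathcal{H}_n(q)/J\cong\mathbb{K}G$ is commutative semisimple and the $n^2$ characters of $G$ are exactly the $S_{i,j}$. Where you diverge is in how this identification is justified and how explicit you make the projective side. The paper argues directly: $a\mathcal{H}_n(q)=\mathcal{H}_n(q)a$ and $d\mathcal{H}_n(q)=\mathcal{H}_n(q)d$, so $\mathcal{H}_n(q)a+\mathcal{H}_n(q)d$ is a two-sided ideal, nilpotent because $a^n=d^n=0$; hence it lies in $J$, and semisimplicity of the quotient $\mathbb{K}G$ gives the reverse inclusion. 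You instead invoke the general fact that $\mathrm{rad}(A\#\mathbb{K}G)=\mathrm{rad}(A)\#\mathbb{K}G$ when $\mathbb{K}G$ is semisimple and $A$ is local --- a legitimate and more conceptual route (the proof of that fact is precisely the paper's two steps: $G$-stability and nilpotency of $\mathrm{rad}(A)\#\mathbb{K}G$, plus semisimplicity of the quotient), but it is a citation you should actually supply. On the projective side you do more than the paper does at this point: the decomposition $\mathcal{H}_n(q)=\oplus_{i,j}\mathcal{H}_n(q)e_{i,j}$, the spanning set $\{a^sd^te_{i,j}\}$ and the computation of the top are exactly the content the paper defers to Corollary \ref{3.5}, its own justification of $P_{i,j}\cong\mathcal{H}_n(q)e_{i,j}$ being the terse ``follows from Lemma \ref{3.2}.'' Two small repairs for a final write-up: your computation of the top of $\mathcal{H}_n(q)e_{i,j}$ uses $a,d\in J$, so the radical identification must logically come first, not after; and you need not quote primitivity of $e_{i,j}$ from the text --- once the top is shown to be the simple module $S_{i,j}$, indecomposability (hence primitivity) is automatic, since a projective module with simple top is the projective cover of that top.
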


\begin{proof}
Obviously, $a\mathcal{H}_n(q)=\mathcal{H}_n(q)a$ and $d\mathcal{H}_n(q)=\mathcal{H}_n(q)d$.
Since $a^n=0$ and $d^n=0$, $\mathcal{H}_n(q)a+\mathcal{H}_n(q)d$ is a nilpotent ideal of
$\mathcal{H}_n(q)$. Hence $\mathcal{H}_n(q)a+\mathcal{H}_n(q)d\subseteq J$.
On the other hand, it is easy to see that the quotient algebra
$\mathcal{H}_n(q)/(\mathcal{H}_n(q)a+\mathcal{H}_n(q)d)$ is isomorphic to the group
algebra $\mathbb{K}G$, where $G=G(\mathcal{H}_n(q))=\{b^ic^j|0\leqslant i, j\leqslant n-1\}$,
the group of all group-like elements of $\mathcal{H}_n(q)$.
Since $\mathbb{K}G$ is semisimple, $J\subseteq \mathcal{H}_n(q)a+\mathcal{H}_n(q)d$.
Thus, $J=\mathcal{H}_n(q)a+\mathcal{H}_n(q)d$.
Therefore, the simple modules $S_{i,j}$ exhaust all simple modules of $\mathcal{H}_n(q)$,
and the projective modules $P_{i,j}$ exhaust all indecomposable projective modules of
$\mathcal{H}_n(q)$, $i,j\in\mathbb{Z}_n$. The last statement of the lemma
follows from Lemma \ref{3.2}.
\end{proof}

\begin{corollary}\label{3.4}
$\mathcal{H}_n(q)$ is a basic algebra. Moreover,
$J$ is a Hopf ideal of $\mathcal{H}_n(q)$, and the Loewy length of $\mathcal{H}_n(q)$ is $2n-1$.

\end{corollary}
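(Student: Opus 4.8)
The statement has three assertions, and I would establish them in turn, all resting on the identity $J=\mathcal{H}_n(q)a+\mathcal{H}_n(q)d$ obtained in the proof of Lemma~\ref{3.3}. For the \emph{basic} claim, recall that Lemma~\ref{3.3} shows every simple $\mathcal{H}_n(q)$-module is one of the one-dimensional modules $S_{i,j}$; equivalently, the proof of Lemma~\ref{3.3} identifies $\mathcal{H}_n(q)/J\cong\mathbb{K}G\cong\mathbb{K}^{n^2}$. Since $\mathbb{K}$ is algebraically closed, a finite-dimensional algebra is basic precisely when its semisimple quotient $\mathcal{H}_n(q)/J$ is a product of copies of $\mathbb{K}$, so this part is immediate.

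For \emph{$J$ a Hopf ideal}, I would verify the coideal and antipode conditions on the generators $a,d$ and then propagate them using that $J$ is a two-sided ideal. Since $\varepsilon$ is an algebra map with $\varepsilon(a)=\varepsilon(d)=0$, every element of $J$ is killed by $\varepsilon$. Skew-primitivity gives $\Delta(a)=a\ot b+1\ot a$ and $\Delta(d)=d\ot c+1\ot d$, both lying in $J\ot\mathcal{H}_n(q)+\mathcal{H}_n(q)\ot J$; as this subspace is a two-sided ideal of $\mathcal{H}_n(q)\ot\mathcal{H}_n(q)$ and $\Delta$ is multiplicative, $\Delta(ha)=\Delta(h)\Delta(a)$ and $\Delta(hd)=\Delta(h)\Delta(d)$ stay inside it for every $h\in\mathcal{H}_n(q)$. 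Finally $S(a)=-ab^{n-1}$ and $S(d)=-dc^{n-1}$ lie in $J$, so since $S$ is an anti-homomorphism and $J$ is a two-sided ideal, $S(ha)=S(a)S(h)$ and $S(hd)=S(d)S(h)$ lie in $J$ too. Hence $J$ is a Hopf ideal, and $\mathcal{H}_n(q)/J\cong\mathbb{K}G$ is the corresponding quotient Hopf algebra.

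For the \emph{Loewy length}, I would use the normalization relations $a\mathcal{H}_n(q)=\mathcal{H}_n(q)a$ and $d\mathcal{H}_n(q)=\mathcal{H}_n(q)d$ from the proof of Lemma~\ref{3.3}, together with $da=ad$, to collect all generators on one side, yielding $J^m=\sum_{i+j=m}\mathcal{H}_n(q)a^id^j$ for every $m\>1$. Since $a^n=d^n=0$, any pair with $i+j=2n-1$ has $\max(i,j)\>n$, so $a^id^j=0$ and therefore $J^{2n-1}=0$; conversely $a^{n-1}d^{n-1}$ is a nonzero member of the basis $\{a^ib^jc^ld^k\}$ and lies in $J^{2n-2}$, so $J^{2n-2}\neq 0$. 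Thus the nilpotency index of $J$, i.e. the Loewy length, is exactly $2n-1$.

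The computations are all short, and the single step needing care is the identity $J^m=\sum_{i+j=m}\mathcal{H}_n(q)a^id^j$, which is the main technical point. It follows by induction from $J=\mathcal{H}_n(q)a+\mathcal{H}_n(q)d$ using the normalizing relations to rewrite every product of generators with all the $a$'s and $d$'s pushed to the right. Alternatively, the same Loewy length drops out of the algebra isomorphism $\mathcal{H}_n(q)\cong A\#\mathbb{K}G$: here $J=\mathrm{rad}(A)\#\mathbb{K}G$ with $\mathrm{rad}(A)=(x,y)$ in $A=\mathbb{K}[x,y]/(x^n,y^n)$, one checks $J^m=\mathrm{rad}(A)^m\#\mathbb{K}G$ since $G$ acts on $A$ by algebra automorphisms preserving $\mathrm{rad}(A)$, and the nilpotency index of $(x,y)$ is visibly $2n-1$ because $x^{n-1}y^{n-1}$ survives in degree $2n-2$ while every monomial of degree $2n-1$ vanishes.
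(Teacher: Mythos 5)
Your proof is correct and follows essentially the same route as the paper's: basicness from Lemma~\ref{3.3}, the Hopf ideal property checked directly on the generators $a,d$ using $J=\mathcal{H}_n(q)a+\mathcal{H}_n(q)d$, and the Loewy length from $a^{n-1}d^{n-1}\neq 0$ together with $a^n=d^n=0$; you merely make explicit the normalization identity $J^m=\sum_{i+j=m}\mathcal{H}_n(q)a^id^j$ that the paper leaves implicit in its ``one can easily check'' and ``it follows'' steps. The alternative you sketch via $\mathcal{H}_n(q)\cong A\#\mathbb{K}G$ with $J=\mathrm{rad}(A)\#\mathbb{K}G$ is a nice cross-check but not a genuinely different argument.
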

\begin{proof}
It follows from Lemma \ref{3.3} that $\mathcal{H}_n(q)$ is a basic algebra.
By $J=\mathcal{H}_n(q)a+\mathcal{H}_n(q)d$, one can easily check that
$J$ is a coideal and $S(J)\subseteq J$. Hence $J$ is a Hopf ideal.
By $a^{n-1}\neq 0$ and $d^{n-1}\neq 0$, one gets $(\mathcal{H}_n(q)a+\mathcal{H}_n(q)d)^{2n-2}\neq0$.
By $a^n=d^n=0$, one gets $(\mathcal{H}_n(q)a+\mathcal{H}_n(q)d)^{2n-1}=0$.
It follows that the Loewy length of $\mathcal{H}_n(q)$ is $2n-1$.
\end{proof}

In the rest of this section, we regard that $P_{i,j}=\mathcal{H}_n(q)e_{i,j}$ for all $i,j\in\mathbb{Z}_n$.

\begin{corollary}\label{3.5}
$P_{i,j}$ is $n^2$-dimensional with a $\mathbb K$-basis $\{a^kd^le_{i,j}|0\leqslant k, l\leqslant n-1\}$,
$i,j\in\mathbb{Z}_n$. Consequently, $\mathcal{H}_n(q)$ is an
indecomposable algebra.
\end{corollary}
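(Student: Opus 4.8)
The plan is to first pin down a spanning set for $P_{i,j}=\mathcal{H}_n(q)e_{i,j}$ and then promote it to a basis by a global dimension count. Since $\{a^sb^tc^ud^v\mid 0\<s,t,u,v\<n-1\}$ is a $\mathbb K$-basis of $\mathcal{H}_n(q)$, the module $P_{i,j}$ is spanned by the vectors $a^sb^tc^ud^ve_{i,j}$. Using Lemma~\ref{3.2} to push the group-like part to the right, I would compute $d^ve_{i,j}=e_{i,j-v}d^v$, then $c^ue_{i,j-v}=q^{u(j-v)}e_{i,j-v}$ and $b^te_{i,j-v}=q^{ti}e_{i,j-v}$, and finally $a^se_{i,j-v}=e_{i+s,j-v}a^s$. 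Applying the same reduction to $a^sd^ve_{i,j}=e_{i+s,j-v}a^sd^v$ and comparing yields $a^sb^tc^ud^ve_{i,j}=q^{u(j-v)+ti}\,a^sd^ve_{i,j}$. Hence every spanning vector is a nonzero scalar multiple of some $a^kd^le_{i,j}$, so $\{a^kd^le_{i,j}\mid 0\<k,l\<n-1\}$ spans $P_{i,j}$ and in particular $\dim P_{i,j}\<n^2$.

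To obtain equality I would use that the $e_{i,j}$ are orthogonal idempotents with $\sum_{i,j}e_{i,j}=1$ (which follows from $\sum_{i}q^{-ik}=n\delta_{k,0}$), so that $\mathcal{H}_n(q)=\bigoplus_{i,j}\mathcal{H}_n(q)e_{i,j}=\bigoplus_{i,j}P_{i,j}$ as left modules. Counting dimensions gives $\sum_{i,j}\dim P_{i,j}=\dim\mathcal{H}_n(q)=n^4$; since there are exactly $n^2$ summands, each of dimension at most $n^2$, every $\dim P_{i,j}$ must equal $n^2$ and the spanning set above is therefore a basis. This settles the first assertion.

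For the indecomposability of $\mathcal{H}_n(q)$ I would read off the module structure from Lemma~\ref{3.2} and the defining relations: on the basis $\{a^kd^le_{i,j}\}$ one has $b\cdot a^kd^le_{i,j}=q^{i+k}a^kd^le_{i,j}$ and $c\cdot a^kd^le_{i,j}=q^{j-l}a^kd^le_{i,j}$, while $a$ raises the exponent $k$ and $d$ raises the exponent $l$. In particular $ae_{i,j}=e_{i+1,j}a$ and $de_{i,j}=e_{i,j-1}d$ are nonzero basis elements, so $e_{i+1,j}\mathcal{H}_n(q)e_{i,j}\neq0$ and $e_{i,j-1}\mathcal{H}_n(q)e_{i,j}\neq0$; equivalently ${\rm rad}(P_{i,j})/{\rm rad}^2(P_{i,j})$ has $S_{i+1,j}$ and $S_{i,j-1}$ among its summands. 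Thus $e_{i,j}$ is linked to $e_{i+1,j}$ and to $e_{i,j-1}$, and since the moves $(i,j)\mapsto(i+1,j)$ and $(i,j)\mapsto(i,j-1)$ generate all of $\mathbb{Z}_n\times\mathbb{Z}_n$, the linking graph on $\{e_{i,j}\}$ is connected. A finite-dimensional algebra whose complete set of primitive orthogonal idempotents has a connected linking graph has no nontrivial central idempotent, so $\mathcal{H}_n(q)$ is indecomposable. The first two steps are routine bookkeeping with Lemma~\ref{3.2} and the dimension count; the only genuine content of the last step is the observation that the two families of arrows already connect the entire index set $\mathbb{Z}_n\times\mathbb{Z}_n$, which is the point I would emphasize.
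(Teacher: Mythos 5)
Your treatment of the basis statement is essentially the paper's own proof: spanning via Lemma \ref{3.2} gives $\dim P_{i,j}\leqslant n^2$, and the decomposition $\mathcal{H}_n(q)=\oplus_{i,j\in\mathbb{Z}_n}\mathcal{H}_n(q)e_{i,j}$ together with $\dim\mathcal{H}_n(q)=n^4$ forces equality; you merely make explicit the scalar $q^{u(j-v)+ti}$ and the identity $\sum_{i,j}e_{i,j}=1$, which the paper leaves implicit. Where you genuinely diverge is the indecomposability claim. The paper deduces it from the fact that every simple module occurs as a composition factor of $P_{i,j}$, with multiplicity one (read off from the weights $b\cdot a^kd^le_{i,j}=q^{i+k}a^kd^le_{i,j}$ and $c\cdot a^kd^le_{i,j}=q^{j-l}a^kd^le_{i,j}$, which show the subquotient spanned by $a^kd^le_{i,j}$ is $S_{i+k,j-l}$), so all simples lie in a single block. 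You instead run a block-theoretic linking argument: $ae_{i,j}=e_{i+1,j}ae_{i,j}\neq 0$ and $de_{i,j}=e_{i,j-1}de_{i,j}\neq 0$ link $(i,j)$ to $(i+1,j)$ and $(i,j-1)$, these moves generate $\mathbb{Z}_n\times\mathbb{Z}_n$, hence the linking graph on the complete set of primitive orthogonal idempotents is connected and no nontrivial central idempotent can exist. This is correct (primitivity of the $e_{i,j}$ is supplied by Lemma \ref{3.3}, completeness by your computation of $\sum_{i,j}e_{i,j}$), and it is more economical than the paper's route, since it needs only two nonzero elements per idempotent rather than the whole composition series. What it buys less of is reusable information: the multiplicity-one statement that the paper establishes here is exactly what is invoked again in the proof of Proposition \ref{3.7} to get $[P_{0,0}]=\sum_{r,t\in\mathbb{Z}_n}[S_{r,t}]$ in $G_0(\mathcal{H}_n(q))$; note that your own weight computation already exhibits all $n^2$ composition factors, each once, so you are one sentence away from that stronger conclusion.
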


\begin{proof}
By Lemma \ref{3.2}, $P_{i,j}={\rm span}\{a^kd^le_{i,j}|0\leqslant k, l\leqslant n-1\}$, and hence
dim$P_{i,j}\leqslant n^2$. Now it follows from
$\mathcal{H}_n(q)=\oplus_{i,j\in\mathbb{Z}_n}\mathcal{H}_n(q)e_{i,j}$
and dim$\mathcal{H}_n(q)=n^4$ that $P_{i,j}$ is $n^2$-dimensional over $\mathbb{K}$ with a basis
$\{a^kd^le_{i,j}|0\leqslant k, l\leqslant n-1\}$. Then by Lemmas \ref{3.2}-\ref{3.3}, one knows that
every simple module is a simple factor of $P_{i,j}$ with the multiplicity one.
Consequently, $\mathcal{H}_n(q)$ is an indecomposable algebra.
\end{proof}

Given $M\in\mathcal{H}_n(q)$-mod, for any $\a\in\mathbb{K}$ and $u,v\in M$,
we use $u\xrightarrow{\a}v$ (resp. $u\stackrel{\a}{\dashrightarrow}v$) to represent $a\cdot u=\a v$
(resp. $d\cdot u=\a v$). Moreover, we omit the decoration of the arrow if $\a=1$.

For $i, j\in\mathbb{Z}_n$, let $e_{i,j}^{k,l}=a^kd^le_{i,j}$ in $P_{i,j}$, $0\<k,l\<n-1$.
Then the structure of $P_{i,j}$ can be described as follows:
$$\begin{tikzpicture}[scale=1]
\path (0,0) node(e) {$e_{i,j}^{0,0}$};
\path (-1,-1) node(a) {$e_{i,j}^{1,0}$} (1,-1) node(d) {$e_{i,j}^{0,1}$};
\path (-2,-1.95) node(a2) {$\vdots$}
(0,-2) node(ad) {$e_{i,j}^{1,1}$} (2,-2) node(d2) {$\ddots$};
\path (-1.5, -2.35) node(1) {$\ddots$} (1.6, -2.4) node(1c) {$\adots$};
\path (-3,-3) node(an-2) {$e_{i,j}^{n-2,0}$}
(-1.14,-3.23) node(2l) {$\cdot$} (-1,-3) node(2) {$\ddots$} (-0.87,-3.01) node(2r) {$\cdot$}
(0.85,-3.235) node(3l) {$\cdot$} (1,-3) node(3) {$\ddots$}
(1.15,-3) node(3r) {$\cdot$} (3,-3) node (dn-2) {$e_{i,j}^{0,n-2}$};
\path (-0.5,-3.5) node(3.5l) {$\ddots$} (0.5,-3.5) node(3.5c) {$\adots$};
\path (-4,-4) node(an-1) {$e_{i,j}^{n-1,0}$} (-2, -4) node(an-2d) {$e_{i,j}^{n-2,1}$}
(-0.15,-4.22) node(4l) {$\cdot$} (0,-4) node(4) {$\ddots$} (0.15,-4) node(4r) {$\cdot$}
(2,-4) node(adn-2) {$e_{i,j}^{1,n-2}$} (4, -4) node(dn-1) {$e_{i,j}^{0,n-1}$};
\path (-0.5,-4.45) node(4.5l) {$\adots$} (0.5,-4.5) node(4.5r) {$\ddots$};
\path (-3,-5) node(an-1d) {$e_{i,j}^{n-1,1}$}
(-1.15,-5.235) node(5l) {$\cdot$} (-1,-5) node(5) {$\ddots$} (-0.85,-5) node(5r) {$\cdot$}
(0.85,-5.235) node(6l) {$\cdot$} (1,-5) node(6) {$\ddots$}
(1.15, -5) node(6r) {$\cdot$} (3,-5) node (adn-1) {$e_{i,j}^{1,n-1}$};
\path (-1.65,-5.5) node(5.5l) {$\adots$} (1.5, -5.5) node(5.5r) {$\ddots$};
\path (-2,-6) node(7) {$\ddots$} (0,-6) node(an-2dn-2) {$e_{i,j}^{n-2,n-2}$}
(2,-6) node (9) {$\adots$};
\path (-1,-7) node(an-1dn-2) {$e_{i,j}^{n-1,n-2}$\ \ }
(1,-7) node(an-2dn-1) {\ \ $e_{i,j}^{n-2,n-1}$};
\path (0,-8) node(an-1dn-1) {$e_{i,j}^{n-1,n-1}$};
\draw[->] (e) --(a);
\draw[->,dashed] (e) --(d);
\draw[->] (a) --(a2);
\draw[->,dashed] (a) --(ad);
\draw[->] (d) --(ad);
\draw[->,dashed] (d) --(d2);
\draw[->] (a2) --(an-2);
\draw[->] (ad) --(2);
\draw[->,dashed] (ad) --(3);
\draw[->,dashed] (d2) --(dn-2);
\draw[->] (an-2) --(an-1);
\draw[->,dashed] (an-2) --(an-2d);
\draw[->] (2) --(an-2d);
\draw[->,dashed] (3) --(adn-2);
\draw[->] (dn-2) --(adn-2);
\draw[->,dashed] (dn-2) --(dn-1);
\draw[->,dashed] (an-1) --(an-1d);
\draw[->] (an-2d) --(an-1d);
\draw[->,dashed] (an-2d) --(5);
\draw[->] (adn-2) --(6);
\draw[->,dashed] (adn-2) --(adn-1);
\draw[->] (dn-1) --(adn-1);
\draw[->,dashed] (an-1d) --(7);
\draw[->,dashed] (5) --(an-2dn-2);
\draw[->] (6) --(an-2dn-2);
\draw[->] (adn-1) --(9);
\draw[->,dashed] (7) --(an-1dn-2);
\draw[->] (an-2dn-2) --(an-1dn-2);
\draw[->,dashed] (an-2dn-2) --(an-2dn-1);
\draw[->] (9) --(an-2dn-1);
\draw[->,dashed] (an-1dn-2) --(an-1dn-1);
\draw[->] (an-2dn-1) --(an-1dn-1);
\end{tikzpicture}$$

\begin{proposition}\label{3.6}
$S_{i,j}\ot S_{k,l}\cong S_{i+k,j+l}$ and
$S_{i,j}\ot P_{k,l}\cong P_{k,l}\ot S_{i,j}\cong P_{i+k,j+l}$
for all $i,j,k,l\in\mathbb{Z}_n$.
\end{proposition}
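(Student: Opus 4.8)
The plan is to prove the three isomorphisms directly by exhibiting explicit module maps, exploiting the one-dimensionality of the simple modules $S_{i,j}$ together with the idempotent description $P_{k,l}=\mathcal{H}_n(q)e_{k,l}$ from \leref{3.3}. For the first isomorphism $S_{i,j}\ot S_{k,l}\cong S_{i+k,j+l}$, I would pick generators $v\in S_{i,j}$ and $w\in S_{k,l}$ and compute the action of the algebra generators $a,b,c,d$ on the one-dimensional space spanned by $v\ot w$, using the comultiplication $\t(a)=a\ot b+1\ot a$, $\t(b)=b\ot b$, $\t(c)=c\ot c$, $\t(d)=d\ot c+1\ot d$. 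Since $av=dv=0$ and $aw=dw=0$, the primitive-type generators $a,d$ annihilate $v\ot w$, while $b$ and $c$ act diagonally: $b(v\ot w)=bv\ot bw=q^{i+k}(v\ot w)$ and similarly $c(v\ot w)=q^{j+l}(v\ot w)$. This shows $v\ot w$ spans a submodule isomorphic to $S_{i+k,j+l}$, and since the tensor product is one-dimensional the map is an isomorphism.

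For the second and third isomorphisms the key tool is that tensoring with a one-dimensional module is an exact functor sending indecomposables to indecomposables and preserving dimension, so $S_{i,j}\ot P_{k,l}$ is an $n^2$-dimensional indecomposable projective module (projectivity is preserved because $S_{i,j}$, being one-dimensional, is invertible in the tensor category with inverse $S_{-i,-j}$, so $S_{i,j}\ot(-)$ is an equivalence taking projectives to projectives). It then suffices to identify the top of $S_{i,j}\ot P_{k,l}$. The module $P_{k,l}$ has simple top $S_{k,l}$, realized by the image of $e_{k,l}$ modulo $J P_{k,l}$; tensoring with $S_{i,j}$ and applying the first isomorphism identifies the top as $S_{i,j}\ot S_{k,l}\cong S_{i+k,j+l}$. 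Since an indecomposable projective is determined up to isomorphism by its top, this forces $S_{i,j}\ot P_{k,l}\cong P_{i+k,j+l}$. The same argument applied on the other side, or the observation that one can equally tensor on the right, gives $P_{k,l}\ot S_{i,j}\cong P_{i+k,j+l}$ as well.

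Alternatively — and this may be the cleaner route to write out — I would construct the isomorphism $S_{i,j}\ot P_{k,l}\to P_{i+k,j+l}$ explicitly on basis elements. Using the basis $\{a^rd^se_{k,l}\}$ of $P_{k,l}$ from \corref{3.5} and a generator $v$ of $S_{i,j}$, I would define a map sending $v\ot a^rd^se_{k,l}$ to a suitable scalar multiple of $a^rd^se_{i+k,j+l}$, then verify $\mathcal{H}_n(q)$-linearity against all four generators. The scalars are forced by matching the diagonal actions of $b,c$ (governed by \leref{3.2}, e.g. $ba^rd^se_{k,l}=q^{k+r-s}a^rd^se_{k,l}$) and by tracking the $q$-powers that arise when $a$ or $d$ is pushed across $v$ through the comultiplication.

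The main obstacle I anticipate is bookkeeping these $q$-powers correctly in the explicit approach: when $a$ acts on $v\ot a^rd^se_{k,l}$ via $\t(a)=a\ot b+1\ot a$, the term $av\ot ba^rd^se_{k,l}$ vanishes since $av=0$, leaving $v\ot a^{r+1}d^se_{k,l}$, but an analogous computation for $d$ via $\t(d)=d\ot c+1\ot d$ will introduce nontrivial eigenvalue factors from $c$ acting on $a^rd^se_{k,l}$, and these must cancel consistently with the chosen scalars for the map to be well defined and linear. For this reason I expect the abstract top-identification argument to be the more economical proof, with the explicit map relegated to a remark or suppressed entirely; the structural fact that tensoring with an invertible one-dimensional module is a category equivalence does all the real work, reducing everything to the trivial first isomorphism.
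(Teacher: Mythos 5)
Your proposal is correct, but it proves the projective-module isomorphisms by a genuinely different route from the paper. The paper's own proof is a two-line citation argument: having established in Corollary \ref{3.4} that the Jacobson radical $J$ is a Hopf ideal, it invokes \cite[Corollary 3.3]{Lo} to write $P_{k,l}\cong P_{0,0}\ot S_{k,l}$, whence $P_{k,l}\ot S_{i,j}\cong P_{0,0}\ot S_{k,l}\ot S_{i,j}\cong P_{0,0}\ot S_{i+k,j+l}\cong P_{i+k,j+l}$, and it handles the left-hand tensor $S_{i,j}\ot P_{k,l}$ by citing the proof of \cite[Lemma 3.3]{Cib99}. You instead observe that $S_{i,j}$ is invertible (with inverse $S_{-i,-j}$, by your first isomorphism), so that $S_{i,j}\ot(-)$ and $(-)\ot S_{i,j}$ are exact auto-equivalences of $\mathcal{H}_n(q)$-mod; since equivalences preserve projectivity, indecomposability, radicals and hence tops, and an indecomposable projective over a finite-dimensional algebra is determined by its top, the identification $\mathrm{top}(S_{i,j}\ot P_{k,l})\cong S_{i,j}\ot S_{k,l}\cong S_{i+k,j+l}$ forces $S_{i,j}\ot P_{k,l}\cong P_{i+k,j+l}$, and symmetrically on the other side. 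What your approach buys is self-containedness: it needs neither the external results of Lorenz and Cibils nor the fact that $J$ is a Hopf ideal, and it works verbatim over any finite-dimensional Hopf algebra with one-dimensional simples. What the paper's approach buys is brevity and reuse of machinery it has already set up (Corollary \ref{3.4}); the underlying content is close, since Lorenz's result is itself a statement that projective covers are tensor twists of $P_{0,0}$. Your fallback explicit-basis construction is indeed the messier option, and you are right to suppress it; your only unstated step, that an abelian equivalence carries the top of a module to the top of its image, is standard and easily justified since the radical is characterized categorically.
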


\begin{proof}
The first isomorphism is obvious. Note that $S_{0,0}$ is the trivial $\mathcal{H}_n(q)$-module.
Since $J$ is a Hopf ideal, it follows from \cite[Corollary 3.3]{Lo} and the first isomorphism that
$P_{k,l}\ot S_{i,j}\cong P_{0,0}\ot S_{k,l}\ot S_{i,j}\cong P_{0,0}\ot S_{i+k,j+l}\cong P_{i+k,j+l}$.
Similarly, one can show that $S_{i,j}\ot P_{k,l}\cong P_{i+k, j+l}$,
which also follows from the proof of \cite[Lemma 3.3]{Cib99}.
\end{proof}

\begin{proposition}\label{3.7}
Let $i,j,k,l\in\mathbb{Z}_n$. Then $P_{i,j}\ot P_{k,l}\cong \oplus_{r,t\in\mathbb{Z}_n}P_{r,t}$.
\end{proposition}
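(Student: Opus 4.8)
The plan is to first show that $P_{i,j}\ot P_{k,l}$ is projective, and then to pin down how many copies of each indecomposable projective occur. For projectivity I would invoke the standard fact that over a finite-dimensional Hopf algebra $H$ the diagonal module $H\ot V$ is free of rank ${\rm dim}V$ for any finite-dimensional $V$ (via the isomorphism $h\ot v\mapsto\sum h_{(1)}\ot S(h_{(2)})v$ onto $H$ tensored with the trivial module). Since $P_{i,j}$ is a direct summand of the regular module $\mathcal{H}_n(q)$, the module $P_{i,j}\ot P_{k,l}$ is a direct summand of $\mathcal{H}_n(q)\ot P_{k,l}\cong({\rm dim}P_{k,l})\,\mathcal{H}_n(q)$, hence projective. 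By Lemma~\ref{3.3} it decomposes as $P_{i,j}\ot P_{k,l}\cong\oplus_{r,t\in\mathbb{Z}_n}m_{r,t}P_{r,t}$ for some integers $m_{r,t}\>0$, and comparing dimensions via Corollary~\ref{3.5} gives $\sum_{r,t}m_{r,t}=n^2$. This alone does not force each multiplicity to equal $1$, so the real content is to show $m_{r,t}=1$ for every $(r,t)$.

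To compute the multiplicities I would use that $P_{r,t}$ is the projective cover of $S_{r,t}$ and that distinct indecomposable projectives have distinct tops, so that $m_{r,t}={\rm dim}\,{\rm Hom}_{\mathcal{H}_n(q)}(P_{i,j}\ot P_{k,l},S_{r,t})$. The category of finite-dimensional $\mathcal{H}_n(q)$-modules is rigid, so the tensor--hom adjunction gives a natural isomorphism
$$
{\rm Hom}_{\mathcal{H}_n(q)}(P_{i,j}\ot P_{k,l},S_{r,t})\cong{\rm Hom}_{\mathcal{H}_n(q)}(P_{i,j},S_{r,t}\ot P_{k,l}^{*}),
$$
where $P_{k,l}^{*}$ is the dual module (the linear dual with $\mathcal{H}_n(q)$-action twisted by the antipode $S$). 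The purpose of this move is that the right-hand side homs out of the projective $P_{i,j}$, which only detects composition multiplicities.

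It then remains to identify $S_{r,t}\ot P_{k,l}^{*}$. Since $\mathcal{H}_n(q)$ is a Hopf, hence Frobenius, algebra, projective modules are injective, and therefore the dual $P_{k,l}^{*}$ of an indecomposable projective is again indecomposable projective; by Proposition~\ref{3.6} the module $S_{r,t}\ot P_{k,l}^{*}$ is then a single indecomposable projective $P_{a,b}$, with $(a,b)$ depending only on $(r,t)$ and $(k,l)$. Finally, from the basis of Corollary~\ref{3.5} and Lemma~\ref{3.2} one checks that $a^kd^le_{a,b}$ affords weight $(a+k,b-l)$, so that $[P_{a,b}]=\sum_{0\<k,l\<n-1}[S_{a+k,b-l}]=\sum_{r,t\in\mathbb{Z}_n}[S_{r,t}]$ in the Grothendieck group; that is, every simple occurs exactly once as a composition factor of every indecomposable projective. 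As $P_{i,j}$ is projective, ${\rm dim}\,{\rm Hom}_{\mathcal{H}_n(q)}(P_{i,j},P_{a,b})=[P_{a,b}:S_{i,j}]=1$, whence $m_{r,t}=1$ for all $(r,t)$, giving the asserted decomposition.

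The main obstacle is exactly the passage from the dimension identity $\sum_{r,t}m_{r,t}=n^2$ to the individual equalities $m_{r,t}=1$: the coarse invariants are useless here, since in $G_0(\mathcal{H}_n(q))$ every class $[P_{r,t}]$ collapses to the same element $\sum_{r,t}[S_{r,t}]$ and so cannot separate the multiplicities. The adjunction-and-self-duality argument above resolves this, and the only points requiring care are the precise form of the rigid duality (the placement of $P_{k,l}^{*}$ and the antipode twist) and the fact that the dual of an indecomposable projective is again indecomposable projective; both are routine once the Frobenius property of $\mathcal{H}_n(q)$ is used.
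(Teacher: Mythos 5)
Your proof is correct, but it follows a genuinely different route from the paper's. The paper first reduces to $i=j=k=l=0$ via Proposition~\ref{3.6}, and then exploits the fact that tensoring with $P_{0,0}$ sends every short exact sequence to a split one (because $P_{0,0}\ot L$ is projective for any $L$, a quotient map onto a projective splits): applying this to the composition series of $P_{0,0}$, whose factors are exactly the $S_{r,t}$ each with multiplicity one, gives directly $P_{0,0}\ot P_{0,0}\cong\oplus_{r,t}P_{0,0}\ot S_{r,t}\cong\oplus_{r,t}P_{r,t}$, with no need for Krull--Schmidt, duals, or Hom computations. You instead establish projectivity of $P_{i,j}\ot P_{k,l}$ (by the freeness of $H\ot V$), write down an unknown Krull--Schmidt decomposition, and compute each multiplicity $m_{r,t}={\rm dim}\,{\rm Hom}(P_{i,j}\ot P_{k,l},S_{r,t})$ via the rigid tensor--hom adjunction, self-duality of projectives over the Frobenius algebra $\mathcal{H}_n(q)$, and Proposition~\ref{3.6}. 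Note that both arguments ultimately rest on the same structural input, namely that $[P_{a,b}]=\sum_{r,t}[S_{r,t}]$ in $G_0(\mathcal{H}_n(q))$ with all multiplicities equal to one (the paper gets this from the proof of Corollary~\ref{3.5}; you re-derive it by the weight computation on the basis $a^kd^le_{a,b}$, which is correct: the weight is $(a+k,b-l)$). What the paper's filtration-splitting trick buys is brevity and the bonus identification $P_{0,0}\ot P_{0,0}\cong\mathcal{H}_n(q)$ as the regular module; what your argument buys is a template that computes tensor multiplicities of projectives over any finite-dimensional Hopf algebra without needing to first split a filtration, at the cost of invoking the Frobenius property and the antipode-twisted duality (both of which you handle correctly, including the placement of $P_{k,l}^{*}$ on the right of $S_{r,t}$ in the adjunction).
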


\begin{proof}
By Proposition \ref{3.6}, we only need to consider the case of $i=j=k=l=0$.
For any short exact sequence $0\ra N\ra M\ra L\ra 0$ of modules,
the exact sequence $0\ra P_{0,0}\ot N\ra P_{0,0}\ot M\ra P_{0,0}\ot L\ra 0$
is always split since $P_{0,0}\ot L$ is projective
for any module $L$. By Corollary \ref{3.4} and the proof of Corollary \ref{3.5},
$[P_{0,0}]=\sum_{r,t\in\mathbb{Z}_n}[S_{r,t}]$ in $G_0(\mathcal{H}_n(q))$.
Then it follows from Proposition \ref{3.6} that
$P_{0,0}\ot P_{0,0}\cong\oplus_{r,t\in\mathbb{Z}_n}P_{0,0}\ot S_{r,t}\cong \oplus_{r,t\in\mathbb{Z}_n}P_{r,t}$,
which is isomorphic to the regular module $\mathcal{H}_n(q)$.
\end{proof}

By Propositions \ref{3.6} and \ref{3.7}, the projective class ring $r_p(\mathcal{H}_n(q))$
is a commutative ring generated by $[S_{1,0}]$,
$[S_{0,1}]$ and $[P_{0,0}]$ subject to the relations
$[S_{1,0}]^n=1$, $[S_{0,1}]^n=1$ and $[P_{0,0}]^2=\sum_{i,j=0}^{n-1}[S_{1,0}]^i[S_{0,1}]^j[P_{0,0}]$.
Hence we have the following proposition.

\begin{theorem}\label{3.8}
$r_p(\mathcal{H}_n(q))\cong \mathbb{Z}[x,y,z]/(x^n-1,y^n-1,z^2-\sum_{i,j=0}^{n-1}x^iy^jz)$.
\end{theorem}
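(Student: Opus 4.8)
The plan is to realize $r_p(\mathcal{H}_n(q))$ as an explicit quotient of a polynomial ring by exhibiting a surjective ring homomorphism and then checking it is injective by a dimension (rank) count. Concretely, I would define a ring homomorphism
$$\phi\colon \mathbb{Z}[x,y,z]\to r_p(\mathcal{H}_n(q)),\qquad x\mapsto [S_{1,0}],\ y\mapsto [S_{0,1}],\ z\mapsto [P_{0,0}].$$
The discussion immediately preceding the statement already establishes that these three classes generate $r_p(\mathcal{H}_n(q))$, so $\phi$ is surjective. The first task is to verify that the three defining relations of the target quotient lie in $\ker\phi$. The relations $[S_{1,0}]^n=1$ and $[S_{0,1}]^n=1$ follow directly from Proposition~\ref{3.6}, since $S_{1,0}^{\ot n}\cong S_{n,0}=S_{0,0}$ is the trivial module (recall $n\equiv 0$ in $\mathbb{Z}_n$), and likewise for $[S_{0,1}]$. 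The relation $z^2=\sum_{i,j=0}^{n-1}x^iy^jz$ is exactly the content of Proposition~\ref{3.7} together with Proposition~\ref{3.6}, because $P_{i,j}\cong S_{i,0}^{\ot i}\ot S_{0,1}^{\ot j}\ot P_{0,0}$ gives $[P_{i,j}]=[S_{1,0}]^i[S_{0,1}]^j[P_{0,0}]$, so $[P_{0,0}]^2=\sum_{i,j}[P_{i,j}]=\sum_{i,j}[S_{1,0}]^i[S_{0,1}]^j[P_{0,0}]$.

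Thus $\phi$ factors through a surjective map $\bar\phi$ from the quotient ring $Q:=\mathbb{Z}[x,y,z]/(x^n-1,y^n-1,z^2-\sum_{i,j}x^iy^jz)$ onto $r_p(\mathcal{H}_n(q))$. It remains to prove $\bar\phi$ is injective. My approach is to show that both sides are free $\mathbb{Z}$-modules of the same finite rank and that $\bar\phi$ carries a spanning set of $Q$ to a $\mathbb{Z}$-basis of the target. On the target side, $r_p(\mathcal{H}_n(q))$ is a free abelian group whose basis consists of the classes of the indecomposable projective and simple modules appearing among its generators; by Lemmas~\ref{3.2}--\ref{3.3} and Corollary~\ref{3.5} these are precisely the $n^2$ simples $[S_{i,j}]$ and the $n^2$ projectives $[P_{i,j}]$, giving a free $\mathbb{Z}$-module of rank $2n^2$. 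On the source side, I would argue that $Q$ is spanned as a $\mathbb{Z}$-module by the monomials $\{x^iy^j : 0\le i,j\le n-1\}$ together with $\{x^iy^jz : 0\le i,j\le n-1\}$: the relations $x^n=1,\ y^n=1$ reduce exponents of $x,y$ modulo $n$, while the relation expressing $z^2$ as a $\mathbb{Z}[x,y]$-multiple of $z$ lets me rewrite any power $z^m$ with $m\ge 2$ linearly in $1$ and $z$ over $\mathbb{Z}[x,y]/(x^n-1,y^n-1)$. Hence $Q$ is spanned by $2n^2$ elements.

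The heart of the matter—and the step I expect to be the main obstacle—is showing these $2n^2$ spanning elements of $Q$ are actually $\mathbb{Z}$-linearly independent, i.e.\ that $Q$ has rank exactly $2n^2$ and no hidden collapse occurs. Rather than analyzing $Q$ abstractly, the cleanest route is to observe that $\bar\phi$ sends this spanning set bijectively onto the known $\mathbb{Z}$-basis of $r_p(\mathcal{H}_n(q))$: by Proposition~\ref{3.6} one has $\bar\phi(x^iy^j)=[S_{i,j}]$ and $\bar\phi(x^iy^jz)=[P_{i,j}]$, and the $2n^2$ classes $\{[S_{i,j}],[P_{i,j}]\}$ are a basis. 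A surjective $\mathbb{Z}$-module map from a module spanned by $2n^2$ elements onto a free module of rank $2n^2$ must carry that spanning set to a basis and hence be an isomorphism; this simultaneously forces the spanning set of $Q$ to be independent and makes $\bar\phi$ injective. Therefore $\bar\phi$ is a ring isomorphism, which is the claim. The only genuinely delicate point to write carefully is the reduction algorithm in $Q$ guaranteeing a \emph{finite} spanning set of size $2n^2$; once that is in place, the rank argument closes the proof without any further computation.
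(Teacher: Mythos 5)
Your proposal is correct and follows essentially the same route as the paper's proof: the same surjection $\phi\colon \mathbb{Z}[x,y,z]\to r_p(\mathcal{H}_n(q))$, the same verification via Propositions~\ref{3.6} and \ref{3.7} that the three relations lie in $\ker\phi$, the same $2n^2$-element spanning set of the quotient, and the same identification of $\{[S_{i,j}],[P_{i,j}]\}$ as a $\mathbb{Z}$-basis of the target. The only (inessential) difference is the finishing move for injectivity: the paper writes down an explicit $\mathbb{Z}$-linear map $\psi$ with $\psi([S_{i,j}])=\overline{x}^i\overline{y}^j$ and $\psi([P_{i,j}])=\overline{x}^i\overline{y}^j\overline{z}$ and checks that $\psi\circ\overline{\phi}$ fixes the spanning set, whereas you invoke the standard fact that a surjective $\mathbb{Z}$-module map from a module generated by $2n^2$ elements onto a free $\mathbb{Z}$-module of rank $2n^2$ is automatically an isomorphism; both arguments are valid and close the proof in the same way.
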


\begin{proof}
By Propositions \ref{3.6} and \ref{3.7}, $r_p(\mathcal{H}_n(q))$ is a commutative ring.
Moreover, $r_p(\mathcal{H}_n(q))$ is generated, as a $\mathbb Z$-algebra, by
$[S_{1,0}]$, $[S_{0,1}]$ and $[P_{0,0}]$. Therefore, there exists a ring epimorphism
$\phi: \mathbb{Z}[x,y,z]\ra r_p(\mathcal{H}_n(q))$ such that
$\phi(x)=[S_{1,0}]$, $\phi(y)=[S_{0,1}]$ and $\phi(z)=[P_{0,0}]$.
Let $I=(x^n-1,y^n-1,z^2-\sum_{i,j=0}^{n-1}x^iy^jz)$ be the ideal of $\mathbb{Z}[x,y,z]$
generated by $x^n-1$, $y^n-1$ and $z^2-\sum_{i,j=0}^{n-1}x^iy^jz$.
Then it follows from Propositions \ref{3.6} and \ref{3.7} that
$I\subseteq{\rm Ker}(\phi)$. Hence $\phi$ induces a ring epimorphism
$\ol{\phi}: \mathbb{Z}[x,y,z]/I\ra r_p(\mathcal{H}_n(q))$ such that
$\ol{\phi}\circ\pi=\phi$, where $\pi: \mathbb{Z}[x,y,z]\ra \mathbb{Z}[x,y,z]/I$
is the canonical projection. Let $\ol{u}=\pi(u)$ for any $u\in\mathbb{Z}[x,y,z]$.
Then $\ol{x}^n=1$, $\ol{y}^n=1$ and $\ol{z}^2=\sum_{i,j=0}^{n-1}\ol{x}^i\ol{y}^j\ol{z}$
in $\mathbb{Z}[x,y,z]/I$. Hence $\mathbb{Z}[x,y,z]/I$ is generated,
as a $\mathbb Z$-module, by
$\{\ol{x}^i\ol{y}^j, \ol{x}^i\ol{y}^j\ol{z}|i,j\in\mathbb{Z}_n\}$.
Since $r_p(\mathcal{H}_n(q))$ is a free $\mathbb Z$-module with a $\mathbb Z$-basis
$\{[S_{i,j}], [P_{i,j}]|i,j\in\mathbb{Z}_n\}$, one can define a $\mathbb Z$-module
map $\psi: r_p(\mathcal{H}_n(q))\ra\mathbb{Z}[x,y,z]/I$ by
$\psi([S_{i,j}])=\ol{x}^i\ol{y}^{j}$ and $\psi([P_{i,j}])=\ol{x}^i\ol{y}^{j}\ol{z}$
for any $i,j\in\mathbb{Z}_n$. Now for any $i,j\in\mathbb{Z}_n$, we have
$\psi(\ol{\phi}(\ol{x}^i\ol{y}^j))=\psi(\ol{\phi}(\ol{x})^i\ol{\phi}(\ol{y})^j)
=\psi([S_{1,0}]^i[S_{0,1}]^j)=\psi([S_{i,j}])=\ol{x}^i\ol{y}^j$ and
$\psi(\ol{\phi}(\ol{x}^i\ol{y}^j\ol{z}))=\psi(\ol{\phi}(\ol{x})^i\ol{\phi}(\ol{y})^j\ol{\phi}(\ol{z}))
=\psi([S_{1,0}]^i[S_{0,1}]^j[P_{0,0}])=\psi([P_{i,j}])=\ol{x}^i\ol{y}^j\ol{z}$.
This shows that $\ol{\phi}$ is injective, and so
$\ol{\phi}$ is a ring isomorphism.
\end{proof}

Now we consider the projective class algebra $R_p(\mathcal{H}_n(q))$.
By Theorem \ref{3.8}, we have
$$\begin{array}{c}
R_p(\mathcal{H}_n(q))\cong \mathbb{K}[x,y,z]/(x^n-1,y^n-1,z^2-\sum_{i,j=0}^{n-1}x^iy^jz).\\
\end{array}$$
Put $I=(x^n-1,y^n-1,z^2-\sum_{i,j=0}^{n-1}x^iy^jz)$ and let $J( \mathbb{K}[x,y,z]/I)$
be the Jacobson radical of $\mathbb{K}[x,y,z]/I$.
For any $u\in\mathbb{K}[x,y,z]$, let $\ol{u}$ denote the image of $u$
under the canonical projection $\mathbb{K}[x,y,z]\ra\mathbb{K}[x,y,z]/I$.
Then by the proof of Proposition \ref{3.8}, $\mathbb{K}[x,y,z]/I$ is of
dimension $2n^2$ with a $\mathbb K$-basis
$\{\ol{x}^i\ol{y}^j, \ol{x}^i\ol{y}^j\ol{z}|0\<i,j\<n-1\}$.
From $\ol{x}^n=1$, $\ol{y}^n=1$ and $\ol{z}^2=\sum_{i,j=0}^{n-1}\ol{x}^i\ol{y}^j\ol{z}$,
one gets $(1-\ol{x})\ol{z}^2=(1-\ol{y})\ol{z}^2=0$, and so $((1-\ol{x})\ol{z})^2=((1-\ol{y})\ol{z})^2=0$.
Consequently, the ideal $((1-\ol{x})\ol{z}, (1-\ol{y})\ol{z})$ of $\mathbb{K}[x,y,z]/I$
generated by $(1-\ol{x})\ol{z}$ and $(1-\ol{y})\ol{z}$ is contained in $J( \mathbb{K}[x,y,z]/I)$.
Moreover, dim$((\mathbb{K}[x,y,z]/I)/((1-\ol{x})\ol{z}, (1-\ol{y})\ol{z})=n^2+1$ and
$$\begin{array}{rl}
&(\mathbb{K}[x,y,z]/I)/((1-\ol{x})\ol{z}, (1-\ol{y})\ol{z})\\
\cong&\mathbb{K}[x,y,z]/(x^n-1,y^n-1,z^2-n^2z, (1-x)z, (1-y)z).\\
\end{array}$$
Let $\pi: \mathbb{K}[x,y,z]\ra \mathbb{K}[x,y,z]/(x^n-1,y^n-1,z^2-n^2z, (1-x)z, (1-y)z)$
be the canonical projection. For any integers $k,l\>0$, let $f_{k,l}=\frac{1}{n^2}\sum_{i,j=0}^{n-1}q^{ki+lj}x^iy^j$
in $\mathbb{K}[x,y,z]$. Then a straightforward verification shows that
$$\begin{array}{c}
\{\pi(f_{k,l}), \pi(f_{0,k}), \pi(f_{0,0}-\frac{1}{n^2}z), \pi(\frac{1}{n^2}z)|1\<k\<n-1, 0\<l\<n-1\}\\
\end{array}$$
is a set of orthogonal idempotents, and so it is a full set of orthogonal primitive idempotents
in $\mathbb{K}[x,y,z]/(x^n-1,y^n-1,z^2-n^2z, (1-x)z, (1-y)z)$. Therefore,
$$\mathbb{K}[x,y,z]/(x^n-1,y^n-1,z^2-n^2z, (1-x)z, (1-y)z)\cong\mathbb{K}^{n^2+1}.$$
Thus, $J( \mathbb{K}[x,y,z]/I)\subseteq((1-\ol{x})\ol{z}, (1-\ol{y})\ol{z})$,
and so $J( \mathbb{K}[x,y,z]/I)=((1-\ol{x})\ol{z}, (1-\ol{y})\ol{z})$.
This shows the following proposition.

\begin{proposition}\label{3.9}
Let $J(R_p(\mathcal{H}_n(q)))$ be the Jacobson radical of $R_p(\mathcal{H}_n(q))$.
Then $J(R_p(\mathcal{H}_n(q)))=((1-[S_{1,0}])[P_{0,0}], (1-[S_{0,1}])[P_{0,0}])$ and
$$\begin{array}{rl}
&R_p(\mathcal{H}_n(q))/J(R_p(\mathcal{H}_n(q)))\\
\cong&\mathbb{K}[x,y,z]/(x^n-1,y^n-1,z^2-n^2z, (1-x)z, (1-y)z)
\cong{\mathbb K}^{n^2+1}.\\
\end{array}$$
\end{proposition}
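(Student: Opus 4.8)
The plan is to transport everything across the ring isomorphism of Theorem~\ref{3.8}. Extending scalars to $\mathbb{K}$, we may identify $R_p(\mathcal{H}_n(q))$ with the finite-dimensional commutative $\mathbb{K}$-algebra $R=\mathbb{K}[x,y,z]/I$, where $I=(x^n-1,y^n-1,z^2-\sum_{i,j=0}^{n-1}x^iy^jz)$, under the correspondences $\bar x=[S_{1,0}]$, $\bar y=[S_{0,1}]$ and $\bar z=[P_{0,0}]$. Because $R$ is commutative and finite-dimensional, its Jacobson radical equals its nilradical, so I only need to determine the nilpotent elements. Writing $N=((1-\bar x)\bar z,(1-\bar y)\bar z)$, I would establish $J(R)=N$ by proving the two inclusions separately.

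For $N\subseteq J(R)$, I would first record the telescoping identity $(1-\bar x)\bar z^2=(1-\bar x)\sum_{i,j}\bar x^i\bar y^j\bar z=\sum_j\bar y^j(1-\bar x^n)\bar z=0$, using $\bar x^n=1$, and symmetrically $(1-\bar y)\bar z^2=0$. Consequently $((1-\bar x)\bar z)^2=(1-\bar x)^2\bar z^2=0$ and $((1-\bar y)\bar z)^2=0$, so both generators of $N$ are nilpotent and hence lie in the radical; as $J(R)$ is an ideal, $N\subseteq J(R)$.

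For the reverse inclusion it suffices to check that $R/N$ is semisimple. Modulo $N$ the relations $(1-x)z=(1-y)z=0$ give $xz=yz=z$, so $\sum_{i,j}x^iy^jz=n^2z$ and the cubic relation collapses to $z^2=n^2z$; thus $R/N\cong\mathbb{K}[x,y,z]/(x^n-1,y^n-1,z^2-n^2z,(1-x)z,(1-y)z)$, which has dimension $n^2+1$ with basis $\{x^iy^j,z\mid 0\le i,j\le n-1\}$. To see this is $\mathbb{K}^{n^2+1}$ I would exhibit a complete set of orthogonal primitive idempotents: the characters $f_{k,l}=\frac{1}{n^2}\sum_{i,j}q^{ki+lj}x^iy^j$ of $\mathbb{Z}_n\times\mathbb{Z}_n$ are orthogonal idempotents, and since $(1-x)z=(1-y)z=0$ the element $z$ is supported only at the trivial character $f_{0,0}$, where $z^2=n^2z$ makes $\frac1{n^2}z$ idempotent and splits $f_{0,0}=(f_{0,0}-\frac1{n^2}z)+\frac1{n^2}z$. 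This yields exactly the $n^2+1$ idempotents listed before the statement, each spanning a copy of $\mathbb{K}$, so $R/N\cong\mathbb{K}^{n^2+1}$ has trivial radical. Hence $J(R)\subseteq N$, giving $J(R)=N$; translating back through Theorem~\ref{3.8} then yields the stated formula for $J(R_p(\mathcal{H}_n(q)))$ and the stated quotient.

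The main obstacle is the semisimplicity argument underlying $J(R)\subseteq N$: one must verify that the proposed elements genuinely form a complete orthogonal set of primitive idempotents, i.e.\ the orthogonality relations $f_{k,l}f_{k',l'}=\delta_{kk'}\delta_{ll'}f_{k,l}$, the splitting of the component at $f_{0,0}$, and that they sum to $1$. By contrast, the nilpotency computation giving $N\subseteq J(R)$ and the collapse of the defining relation modulo $N$ are routine.
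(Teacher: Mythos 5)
Your proposal is correct and takes essentially the same route as the paper: both pass through the isomorphism of Theorem~\ref{3.8}, observe that $(1-\ol{x})\ol{z}$ and $(1-\ol{y})\ol{z}$ square to zero (via the telescoping identity $(1-\ol{x})\ol{z}^2=(1-\ol{y})\ol{z}^2=0$) so that the ideal they generate lies in the radical, and then identify the quotient with $\mathbb{K}[x,y,z]/(x^n-1,y^n-1,z^2-n^2z,(1-x)z,(1-y)z)\cong\mathbb{K}^{n^2+1}$ using exactly the same complete set of $n^2+1$ orthogonal idempotents, namely the characters $f_{k,l}$ with $(k,l)\neq(0,0)$ together with $f_{0,0}-\frac{1}{n^2}z$ and $\frac{1}{n^2}z$. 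The verification you flag as the main remaining obstacle (orthogonality, the splitting at $f_{0,0}$, and summing to $1$) is precisely what the paper dispatches as ``a straightforward verification.''
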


\section{\bf The Projective Class Ring of $H_n(0,q)$}\selabel{4}

In this section, we investigate the projective class ring of $H_n(0,q)$.

\begin{proposition}\label{4.1}
$H_n(0,q)$ is a symmetric algebra.
\end{proposition}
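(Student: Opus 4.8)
The plan is to exhibit an explicit symmetric linear functional $\lambda : H_n(0,q) \to \mathbb{K}$, i.e. a functional whose associated bilinear form $(u,v) \mapsto \lambda(uv)$ is nondegenerate and symmetric. Recall that an algebra $A$ is symmetric precisely when there is a linear form $\lambda$ with $\lambda(uv) = \lambda(vu)$ for all $u,v$ and $\ker\lambda$ containing no nonzero left (equivalently right) ideal. Since $H_n(0,q)$ has the $\mathbb{K}$-basis $\{a^ib^jc^ld^k \mid 0 \le i,j,l,k \le n-1\}$, the natural candidate is the functional that reads off the coefficient of the "top" basis element. Concretely, I would set $\lambda(a^ib^jc^ld^k)$ to equal a fixed nonzero scalar exactly when $(i,j,l,k)$ hits the unique top-degree multi-index dictated by the relations, and zero otherwise; the correct top index is forced by $a^n=d^n=0$ together with the requirement that the group-like part $b^jc^l$ be summed out correctly so the form pairs a monomial with its "complement."

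\textbf{Key steps in order.} First I would pin down the candidate $\lambda$ by analyzing the structure of $H_n(0,q)$ as a free module over its "Cartan" part. Because the relations $da - qad = 0$ (note $p=0$ here, so the inhomogeneous term $p(1-bc)$ vanishes) make $a$ and $d$ genuinely $q$-commute, $H_n(0,q)$ is close to a skew group algebra, and the left and right multiplications by $a,b,c,d$ have clean commutation rules with the idempotents $e_{i,j}$ analogous to Lemma~\ref{3.2}. The natural symmetrizing form is $\lambda(u) = $ (coefficient of $a^{n-1}b^0c^0d^{n-1}$ in $u$), or more robustly a Frobenius trace built from the coradical filtration; I would compute $\lambda$ on products of basis elements and verify $\lambda(uv)=\lambda(vu)$ by checking it on the generators, where the only nontrivial cross-terms come from moving $a,d$ past $b,c$ and picking up powers of $q$ that must cancel. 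Second, I would verify nondegeneracy: given a nonzero $u$, produce $v$ with $\lambda(uv)\neq 0$ by multiplying $u$ by an appropriate monomial $a^{n-1-i}b^{-j}c^{-l}d^{n-1-k}$ to push its leading term into the top position. Third, I would confirm the symmetry condition $\lambda(uv)=\lambda(vu)$ is genuinely satisfied and not merely nondegeneracy (which would only give a Frobenius structure); the powers of $q$ accumulated when commuting $a$ and $d$ through $b$ and $c$ in $uv$ versus $vu$ must agree, and this is exactly where the choice $p=0$ matters, since the relation $da=qad$ is homogeneous.

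\textbf{The main obstacle} I expect is verifying the symmetry $\lambda(uv) = \lambda(vu)$ cleanly rather than just Frobenius-ness. For a general Frobenius algebra the Nakayama automorphism $\nu$ is nontrivial, and one only gets $\lambda(uv) = \lambda(\nu(v)u)$; proving $H_n(0,q)$ is \emph{symmetric} amounts to showing $\nu = \mathrm{id}$, which forces all the $q$-powers incurred by the $q$-commutation relations to cancel in the trace. I would handle this by computing $\nu$ explicitly on the four generators: $\nu$ acts on $b,c$ trivially (they are group-like and the top index is balanced in the group part), and the potential scalars on $a$ and $d$ come from the relations $ba=qab$, $dc=qcd$, $da=qad$; I would check that the total weight of the chosen top monomial $a^{n-1}d^{n-1}$ under conjugation by each $b,c$ is trivial because $a^{n-1}$ and $d^{n-1}$ carry opposite $\mathbb{Z}_n$-weights that sum to zero against $b^{n}=c^{n}=1$. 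An alternative, possibly cleaner route is to use that $H_n(0,q)$ is a cocycle twist of $\mathcal{H}_n(q) = A_n(q)\ot A_n(q^{-1})$ and that being symmetric can sometimes be transported along twists; however, since $\mathcal{H}_n(q)$ is \emph{not} symmetric (as the introduction explicitly states), symmetry is \emph{not} a twist invariant here, so I would not rely on transport and would instead carry out the direct computation of $\lambda$ and $\nu$ on generators.
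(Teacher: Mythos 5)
Your overall strategy (exhibit an explicit symmetric nondegenerate trace form) is genuinely different from the paper's proof and can be made to work, but as written it contains a concrete error: the candidate you name first, $\lambda_0(u)=$ the coefficient of $a^{n-1}b^{0}c^{0}d^{n-1}$ in $u$, is a Frobenius form that is \emph{not} symmetric. Take $u=a$ and $v=a^{n-2}d^{n-1}$: then $uv=a^{n-1}d^{n-1}$, so $\lambda_0(uv)=1$, while $d^{n-1}a=q^{n-1}ad^{n-1}$ gives $vu=q^{n-1}a^{n-1}d^{n-1}$, so $\lambda_0(vu)=q^{-1}\neq 1$. Your cancellation argument --- that $a^{n-1}$ and $d^{n-1}$ carry opposite $\mathbb{Z}_n$-weights under conjugation by $b$ and $c$ --- only verifies that the top monomial is fixed by conjugation, which is not what symmetry requires; the offending scalar comes from $q$-commuting $a$ past $d^{n-1}$, i.e.\ precisely from the relation $da=qad$ that you dismissed as harmless because it is homogeneous. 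In general, for monomials $u=a^ib^jc^ld^k$ and $v=a^{i'}b^{j'}c^{l'}d^{k'}$ one computes $uv=q^{i'(j+l+k)+k(j'+l')}a^{i+i'}b^{j+j'}c^{l+l'}d^{k+k'}$, and comparing with $vu$ shows that the form supported on $a^{n-1}b^{j_0}c^{l_0}d^{n-1}$ satisfies $\lambda(uv)=q^{(k-i)(j_0+l_0-1)}\lambda(vu)$ whenever both sides can be nonzero. Hence symmetry holds if and only if $j_0+l_0\equiv 1\pmod n$: your choice $(j_0,l_0)=(0,0)$ fails (its Nakayama automorphism is $u\mapsto bub^{-1}$, which is exactly $S^2$), whereas the coefficient of $a^{n-1}bd^{n-1}$ (equivalently, up to a scalar, $u\mapsto\lambda_0(b^{-1}u)$) does give a symmetric form; nondegeneracy then follows by your complement-multiplication argument, which is sound.

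For comparison, the paper's proof avoids all of this computation: it quotes from \cite{Ch1} that $H_n(0,q)$ is unimodular, observes that $S^2(x)=bxb^{-1}=cxc^{-1}$ so $S^2$ is inner, and invokes the criterion of \cite{Lo, ObSch} that a finite-dimensional unimodular Hopf algebra with inner $S^2$ is symmetric. The failure you would have run into is exactly what that criterion encapsulates: on a unimodular Hopf algebra the natural integral-type form has Nakayama automorphism $S^2$, and one must twist it by the group-like element implementing $S^2$ (here $b$) to obtain a genuinely symmetric form. So your plan is repairable --- and your closing remark that symmetry cannot be transported along the cocycle twist from $\mathcal{H}_n(q)$ is correct and well taken --- but the proof only goes through after correcting the top index, and the corrected computation essentially reproves the cited criterion in this special case.
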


\begin{proof}
By \cite[Proposition 3.4]{Ch1} and its proof, $H_n(0,q)$ is unimodular.
Moreover, $S^2(a)=qa$, $S^2(b)=b$, $S^2(c)=c$ and $S^2(d)=q^{-1}d$,
where $S$ is the antipode of $H_n(0, q)$.
Hence $S^2(x)=bxb^{-1}=cxc^{-1}$ for all $x\in H_n(0,q)$. That is,
$S^2$ is an inner automorphism of $H_n(0,q)$.
It follows from \cite{Lo, ObSch} that $H_n(0,q)$
is a symmetric algebra.
\end{proof}

Note that $\mathcal{H}_n(q)$ is not symmetric since it is not unimodular.

\begin{proposition}\label{4.1+1}
$H_n(0, q)$ is of wild representation type.
\end{proposition}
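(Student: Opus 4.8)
The plan is to mirror the argument for $\mathcal{H}_n(q)$ in \seref{3}, realizing $H_n(0,q)$ as a smash product of a local ``quantum'' subalgebra with the semisimple group algebra $\mathbb{K}G$. First I would let $B$ be the subalgebra of $H_n(0,q)$ generated by $a$ and $d$. Since $p=0$, the defining relation $da-qad=p(1-bc)$ degenerates to $da=qad$, so together with $a^n=0$ and $d^n=0$ the set $\{a^id^k\mid 0\<i,k\<n-1\}$ spans $B$; comparing with the $\mathbb K$-basis $\{a^ib^jc^ld^k\}$ of $H_n(0,q)$ shows these $n^2$ elements are linearly independent, whence $\dim B=n^2$ and $B\cong\mathbb{K}\langle X,Y\rangle/(X^n,Y^n,YX-qXY)$, the quantum complete intersection. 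As before set $G=G(H_n(0,q))=\{b^ic^j\mid i,j\in\mathbb{Z}_n\}\cong\mathbb{Z}_n\times\mathbb{Z}_n$, so that $\mathbb{K}G$ is the coradical.

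Next I would check that $B$ is a left $\mathbb{K}G$-module algebra under the conjugation action $b\cdot a=qa$, $b\cdot d=q^{-1}d$, $c\cdot a=qa$, $c\cdot d=q^{-1}d$: this action merely scales the generators, hence preserves $a^n$ and $d^n$, and since $b\cdot(da-qad)=(q^{-1}d)(qa)-q(qa)(q^{-1}d)=da-qad$ (and similarly for $c$) it preserves the quantum-plane relation. Forming the smash product $B\#\mathbb{K}G$, the defining relations of $H_n(0,q)$ are exactly the smash-product relations $gx=(g\cdot x)g$ for $g\in G$, $x\in B$ (e.g. $ba=(b\cdot a)b=qab$ and $bd=(b\cdot d)b=q^{-1}db$, i.e. $db=qbd$). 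Thus $H_n(0,q)\cong B\#\mathbb{K}G$ as algebras.

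Granting this, the wildness of $H_n(0,q)$ reduces to that of $B$. Since $q$ is a primitive $n$-th root of unity, $\mathrm{char}(\mathbb{K})\nmid n$, so $\mathrm{char}(\mathbb{K})\nmid|G|=n^2$ and $\mathbb{K}G$ is semisimple; then \cite[Theorem 4.5]{Liu} yields that $B\#\mathbb{K}G$ is of wild representation type as soon as $B$ is. So the whole statement comes down to showing that the quantum complete intersection $B$ is wild for $n\>3$.

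This last point is the main obstacle, and it is precisely where the argument departs from \seref{3}: the reference \cite[p.295(3.4)]{Ringel} applies to the commutative algebra $\mathbb{K}[x,y]/(x^n,y^n)$, whereas $B$ is genuinely noncommutative for $q\neq1$, and representation type is not a cocycle-twist invariant (indeed $H_n(1,q)$ is tame). I would therefore either invoke the classification of representation types of quantum complete intersections $\mathbb{K}\langle X,Y\rangle/(X^a,Y^b,YX-qXY)$, under which, as in the commutative case, $a=b=n\>3$ is wild, or adapt Ringel's method directly to the $q$-commuting setting. In the latter route the radical layers are insensitive to $q$, namely $\dim\mathrm{rad}/\mathrm{rad}^2=2$ and $\dim\mathrm{rad}^2/\mathrm{rad}^3=3$ (the latter spanned by $a^2$, $ad$, $d^2$), so the combinatorial skeleton matches the commutative case; the delicate step is that the wild-embedding construction must be carried out respecting $da=qad$ rather than $da=ad$, producing a strict family of pairwise non-isomorphic indecomposables large enough to force wild type. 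Once $B$ is shown wild, the proposition is immediate.
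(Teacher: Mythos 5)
Your proposal is essentially the paper's own proof: the paper likewise takes the subalgebra $A$ generated by $a$ and $d$ (your $B$), identifies it with $\mathbb{K}\langle x,y\rangle/(x^n,y^n,yx-qxy)$, establishes the algebra isomorphism $H_n(0,q)\cong A\#\mathbb{K}G$ with $G\cong\mathbb{Z}_n\times\mathbb{Z}_n$ acting by exactly the scalars you list, and reduces wildness of the smash product to wildness of $A$ via \cite[Theorem 4.5]{Liu} and the semisimplicity of $\mathbb{K}G$, just as in Proposition \ref{3.1}. The only divergence is the step you flag as ``the main obstacle,'' and there your caution is misplaced: \cite[p.295(3.4)]{Ringel} is not a statement about commutative algebras but a wildness criterion for arbitrary finite-dimensional local algebras, phrased in terms of the dimensions of the radical layers --- precisely the invariants you compute ($\dim\mathrm{rad}/\mathrm{rad}^2=2$ and $\dim\mathrm{rad}^2/\mathrm{rad}^3=3$ when $n\geqslant 3$, since $yx$ and $xy$ are proportional). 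Ringel's paper classifies the representation type of local algebras in general, with no commutativity hypothesis, so the criterion applies verbatim to the quantum complete intersection; this is exactly how the paper concludes, citing for the noncommutative $A$ the same reference that \seref{3} cites for the commutative $\mathbb{K}[x,y]/(x^n,y^n)$. Consequently no separate classification of quantum complete intersections and no reworking of Ringel's wild-embedding construction is needed, although your first fallback (quoting such a classification from the literature) would also close the step correctly.
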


\begin{proof}
It is similar to Proposition \ref{3.1}.
Let $A$ be the subalgebra of $H_n(0, q)$ generated by $a$ and $d$.
Then $A$ is a $\mathbb{K}G$-module algebra with the action given by
$b\cdot a=qa$, $b\cdot d=q^{-1}d$, $c\cdot a=qa$ and $c\cdot d=q^{-1}d$,
 where $G=G(H_n(0,q))=\{b^ic^j|i,j\in\mathbb{Z}_n\}\cong \mathbb{Z}_n\times\mathbb{Z}_n$.
Moreover, $A\cong\mathbb{K}\langle x, y\rangle/(x^n, y^n, yx-qxy)$ and $H_n(0,q)\cong A\#\mathbb{K}G$,
as $\mathbb{K}$-algebras. Since $n\>3$, it follows from \cite[p.295(3.4)]{Ringel} that $A$
is of wild representation type. Since $\mathbb{K}G$ is a semisimple and cosemisimple Hopf algebra
by  char$(\mathbb{K})\nmid |G|$, it follows from
\cite[Theorem 4.5]{Liu} that $A\#\mathbb{K}G$ is of wild representation type.
\end{proof}

$H_n(0,q)$ has $n^2$ orthogonal primitive idempotents
$$\begin{array}{c}
e_{i,j}=\frac{1}{n^2}\sum_{k,l\in\mathbb{Z}_n}q^{-ik-jl}b^kc^l
=\frac{1}{n^2}\sum_{k,l=0}^{n-1}q^{-ik-jl}b^kc^l,\ \ i,j\in\mathbb{Z}_n.\\
\end{array}$$

\begin{lemma}\label{4.2}
Let $i,j\in\mathbb{Z}_n$. Then
$$be_{i,j}=q^ie_{i,j}, \ ce_{i,j}=q^je_{i,j},\
ae_{i,j}=e_{i+1,j+1}a, \ de_{i,j}=e_{i-1,j-1}d.$$
\end{lemma}
\begin{proof}
It follows from a straightforward verification.
\end{proof}

For $i,j\in\mathbb{Z}_n$, let $S_{i,j}$ be the one dimensional $H_n(0,q)$-module
defined by $bv=q^iv$, $cv=q^jv$ and $av=dv=0$, $v\in S_{i,j}$.
Let $P_{i,j}=P(S_{i,j})$ be the projective cover of $S_{i,j}$.
Let $J={\rm rad}(H_n(0,q))$ be the Jacobson radical of $H_n(0,q)$.

\begin{lemma}\label{4.3}
The simple modules $S_{i,j}$, $i,j\in\mathbb{Z}_n$, exhaust all simple modules of $H_n(0,q)$,
and consequently, the projective modules $P_{i,j}$, $i,j\in\mathbb{Z}_n$, exhaust all
indecomposable projective modules of $H_n(0,q)$. Moreover,
$P_{i,j}\cong H_n(0,q)e_{i,j}$ for all $i,j\in\mathbb{Z}_n$.
\end{lemma}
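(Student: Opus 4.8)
The statement to prove is the exact analogue of Lemma~\ref{3.3} for the cocycle-deformed algebra $H_n(0,q)$, so the plan is to imitate that proof with the structural adjustments forced by the new multiplication rules. First I would show that $J=H_n(0,q)a+H_n(0,q)d$. For this the key preliminary observation is that both $a$ and $d$ are normal elements, i.e.\ $aH_n(0,q)=H_n(0,q)a$ and $dH_n(0,q)=H_n(0,q)d$: this follows from the defining relations $ba=qab$, $ca=qac$, $da-qad=0$ (since $p=0$, the relation $da-qad=p(1-bc)$ collapses to $da=qad$), and symmetrically $db=qbd$, $dc=qcd$. Because $a^n=d^n=0$, the two-sided ideal $H_n(0,q)a+H_n(0,q)d$ is then nilpotent, hence contained in $J$.

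For the reverse inclusion I would pass to the quotient $H_n(0,q)/(H_n(0,q)a+H_n(0,q)d)$ and identify it with the group algebra $\mathbb{K}G$, where $G=G(H_n(0,q))=\{b^ic^j\mid 0\<i,j\<n-1\}\cong\mathbb{Z}_n\times\mathbb{Z}_n$. Since $\mathrm{char}(\mathbb{K})\nmid|G|$, the algebra $\mathbb{K}G$ is semisimple, which forces $J\subseteq H_n(0,q)a+H_n(0,q)d$ and hence equality. It is worth noting that the relation $da=qad$ (rather than $da=ad$ as in $\mathcal{H}_n(q)$) is exactly what keeps the quotient commutative and equal to $\mathbb{K}G$, so the same argument goes through unchanged at this level.

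Granting $J=H_n(0,q)a+H_n(0,q)d$, the quotient $H_n(0,q)/J\cong\mathbb{K}G$ decomposes as a direct sum of the $n^2$ one-dimensional simple $G$-modules on which $b$ and $c$ act by $n$-th roots of unity; these are precisely the $S_{i,j}$, $i,j\in\mathbb{Z}_n$, defined before the lemma. Hence the $S_{i,j}$ exhaust the simple modules, and by general theory their projective covers $P_{i,j}$ exhaust the indecomposable projective modules. For the final identification $P_{i,j}\cong H_n(0,q)e_{i,j}$, I would invoke Lemma~\ref{4.2}: the relations $be_{i,j}=q^ie_{i,j}$ and $ce_{i,j}=q^je_{i,j}$ show that the top of the indecomposable projective $H_n(0,q)e_{i,j}$ is the simple module $S_{i,j}$, so $H_n(0,q)e_{i,j}\cong P(S_{i,j})=P_{i,j}$.

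I do not expect a genuine obstacle here; the proof is essentially a transcription of Lemma~\ref{3.3}. The one point requiring care is the verification that the normality argument survives the deformed relations, and in particular that setting $p=0$ makes $d$ normal (with $da=qad$) just as $a$ is; this is where the two algebras $\mathcal{H}_n(q)$ and $H_n(0,q)$ genuinely differ, and it is reflected later in Lemma~\ref{4.2} by the shifted index rule $ae_{i,j}=e_{i+1,j+1}a$ (versus $ae_{i,j}=e_{i+1,j}a$ for $\mathcal{H}_n(q)$). As long as one checks that $H_n(0,q)a+H_n(0,q)d$ is a genuine two-sided ideal in this new algebra, the remainder of the argument is routine.
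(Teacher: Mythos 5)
Your proposal is correct and follows essentially the same route as the paper, whose proof of this lemma is literally ``It is similar to Lemma~\ref{3.3}'': normality of $a$ and $d$, nilpotency of $H_n(0,q)a+H_n(0,q)d$, identification of the quotient with the semisimple group algebra $\mathbb{K}G$ to get $J=H_n(0,q)a+H_n(0,q)d$, and then Lemma~\ref{4.2} to identify $H_n(0,q)e_{i,j}$ with the projective cover of $S_{i,j}$. Your only inessential slip is the side remark that $da=qad$ is ``what keeps the quotient commutative''---in fact both $a$ and $d$ are killed in the quotient, so its commutativity follows from $bc=cb$ alone---but this does not affect the argument.
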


\begin{proof}
It is similar to Lemma \ref{3.3}.
\end{proof}

\begin{corollary}\label{4.4}
$H_n(0, q)$ is a basic algebra. Moreover,
$J$ is a Hopf ideal of $H_n(0,q)$, and the Loewy length of $H_n(0,q)$ is $2n-1$.

\end{corollary}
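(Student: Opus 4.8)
The plan is to mirror the proof of Corollary \ref{3.4}, exploiting that $H_n(0,q)$ shares with $\mathcal{H}_n(q)$ the same coalgebra structure, the same group $G=G(H_n(0,q))\cong\mathbb{Z}_n\times\mathbb{Z}_n$ of group-likes, and the same two distinguished skew-primitive generators $a,d$; the one genuine change is that, by the proof of Proposition \ref{4.1+1}, the local subalgebra $A=\langle a,d\rangle$ is now the quantum plane $\mathbb{K}\langle x,y\rangle/(x^n,y^n,yx-qxy)$ rather than a commutative truncated polynomial ring. Basicness is then immediate from Lemma \ref{4.3}: all the simple modules $S_{i,j}$ are one-dimensional, so $H_n(0,q)/J$ is a direct product of copies of $\mathbb{K}$ with no matrix block of size greater than one, whence $H_n(0,q)$ is basic.

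The heart of the argument is to identify the radical explicitly as $J=H_n(0,q)a+H_n(0,q)d$. First I would check this sum is a two-sided ideal: since $p=0$, the relevant defining relations read $ba=qab$, $ca=qac$, $da=qad$, $db=qbd$ and $dc=qcd$, so each of $a$ and $d$ commutes with every generator up to a scalar, giving $aH_n(0,q)=H_n(0,q)a$ and $dH_n(0,q)=H_n(0,q)d$. Next, because $a$ and $d$ $q$-commute and $a^n=d^n=0$, collecting any product of $2n-1$ factors from $\{a,d\}$ into the form $a^id^j$ with $i+j=2n-1$ forces $i\>n$ or $j\>n$, hence $(H_n(0,q)a+H_n(0,q)d)^{2n-1}=0$; this shows $H_n(0,q)a+H_n(0,q)d\subseteq J$. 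Conversely, $H_n(0,q)/(H_n(0,q)a+H_n(0,q)d)\cong\mathbb{K}G$ is semisimple because $\mathrm{char}(\mathbb{K})\nmid|G|=n^2$, so $J\subseteq H_n(0,q)a+H_n(0,q)d$, and equality follows.

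With $J=H_n(0,q)a+H_n(0,q)d$ in hand, the remaining two claims are short. For the Hopf-ideal property I would verify $\e(a)=\e(d)=0$, that $\t(a)=a\ot b+1\ot a$ and $\t(d)=d\ot c+1\ot d$ both lie in $J\ot H_n(0,q)+H_n(0,q)\ot J$ (so $J$ is a coideal, since it is generated as an ideal by $a$ and $d$ and $\t$ is multiplicative), and that $S(a)=-ab^{n-1}\in J$ and $S(d)=-dc^{n-1}\in J$, giving $S(J)\subseteq J$. For the Loewy length, the quantum-plane basis element $a^{n-1}d^{n-1}\neq0$ lies in $J^{2n-2}$, so $J^{2n-2}\neq0$, while $J^{2n-1}=0$ by the nilpotency above; hence the Loewy length equals $2n-1$.

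The main obstacle is the explicit identification $J=H_n(0,q)a+H_n(0,q)d$, and within it the verification that this sum is genuinely a two-sided ideal. This is exactly where the hypothesis $p=0$ is essential: it replaces the inhomogeneous relation $da-qad=p(1-bc)$ by the clean $q$-commutation $da=qad$, without which $H_n(0,q)a+H_n(0,q)d$ would fail to be closed under multiplication. Indeed, this is the structural reason why $H_n(1,q)$, treated in \seref{5}, is not basic.
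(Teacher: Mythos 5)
Your proof is correct and takes essentially the same route as the paper, whose proof of Corollary \ref{4.4} simply repeats the argument of Corollary \ref{3.4}: basicness from the one-dimensionality of the simples in Lemma \ref{4.3}, the identification $J=H_n(0,q)a+H_n(0,q)d$ via nilpotency of that ideal together with semisimplicity of $\mathbb{K}G$, the coideal-plus-antipode check for the Hopf ideal property, and the nilpotency degree (with $a^{n-1}d^{n-1}\neq 0$ but $(H_n(0,q)a+H_n(0,q)d)^{2n-1}=0$) for the Loewy length $2n-1$. The only difference is that you write out the details the paper leaves as ``similar'' and ``easily checked,'' in particular that the $q$-commutation relations for $p=0$ (notably $da=qad$) make $H_n(0,q)a+H_n(0,q)d$ a two-sided ideal, which is indeed the point where the argument would break for $H_n(1,q)$.
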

\begin{proof}
It is similar to Corollary \ref{3.4}.
\end{proof}

Let $e_i=\sum_{j=0}^{n-1}e_{i+j,j}=\frac{1}{n}\sum_{j=0}^{n-1}q^{-ij}b^jc^{-j}$,
$i\in\mathbb{Z}_n$. Then by Lemmas \ref{4.2} and \ref{4.3},
$\{e_i|i\in\mathbb{Z}_n\}$ is a full set of central primitive idempotents
of $H_n(0,q)$. Hence $H_n(0,q)$ decomposes into $n$ blocks
$H_n(0, q)e_i$, $i\in\mathbb{Z}_n$.

In the rest of this section, we regard that $P_{i,j}=H_n(0,q)e_{i,j}$ for all $i,j\in\mathbb{Z}_n$.

\begin{corollary}\label{4.5}
$P_{i,j}$ is $n^2$-dimensional with a $\mathbb K$-basis $\{a^kd^le_{i,j}|0\leqslant k, l\leqslant n-1\}$,
$i,j\in\mathbb{Z}_n$.
\end{corollary}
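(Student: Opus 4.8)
The plan is to reproduce, with the relations of $H_n(0,q)$ replacing those of $\mathcal{H}_n(q)$, the argument used for Corollary~\ref{3.5}. Since $P_{i,j}\cong H_n(0,q)e_{i,j}$ by Lemma~\ref{4.3}, it suffices to analyze the left ideal $H_n(0,q)e_{i,j}$. First I would show that the set $\{a^kd^le_{i,j}\mid 0\leqslant k,l\leqslant n-1\}$ spans it, which yields the bound $\dim P_{i,j}\leqslant n^2$; then I would pin down the exact dimension by a counting argument based on the direct sum decomposition $H_n(0,q)=\oplus_{i,j\in\mathbb{Z}_n}H_n(0,q)e_{i,j}$ together with $\dim H_n(0,q)=n^4$.

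For the spanning claim, recall that $H_n(0,q)$ has the $\mathbb{K}$-basis $\{a^kb^mc^pd^l\}$, so $H_n(0,q)e_{i,j}$ is spanned by the words $a^kb^mc^pd^le_{i,j}$. The key step is to push the group-like factors $b^mc^p$ to the right and absorb them into the idempotent. From the relations $db=qbd$ and $dc=qcd$ one gets $bd=q^{-1}db$ and $cd=q^{-1}dc$, hence $b^mc^pd^l=q^{-(m+p)l}d^lb^mc^p$; then the eigenvalue relations $be_{i,j}=q^ie_{i,j}$ and $ce_{i,j}=q^je_{i,j}$ of Lemma~\ref{4.2} give $b^mc^pe_{i,j}=q^{im+jp}e_{i,j}$. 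Combining these shows that every basis word reduces to $a^kb^mc^pd^le_{i,j}=q^{\,im+jp-(m+p)l}\,a^kd^le_{i,j}$, so $\{a^kd^le_{i,j}\}$ does span $P_{i,j}$ and $\dim P_{i,j}\leqslant n^2$.

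Finally, since $\{e_{i,j}\mid i,j\in\mathbb{Z}_n\}$ is a complete set of orthogonal idempotents with $\sum_{i,j}e_{i,j}=1$, the regular representation splits as $H_n(0,q)=\oplus_{i,j\in\mathbb{Z}_n}H_n(0,q)e_{i,j}$, a direct sum of the $n^2$ left ideals $P_{i,j}$. Because $\dim H_n(0,q)=n^4=n^2\cdot n^2$ and each summand has dimension at most $n^2$, every summand must have dimension exactly $n^2$; hence the $n^2$-element spanning set $\{a^kd^le_{i,j}\}$ is in fact a basis. I do not anticipate a genuine obstacle: the only points needing care are the bookkeeping of the $q$-exponents in the commutation step (these differ from the $\mathcal{H}_n(q)$ case, since here both $b$ and $c$ $q$-commute with $d$) and the fact, already recorded in Lemma~\ref{4.2} and the block decomposition, that the $e_{i,j}$ are orthogonal and sum to $1$. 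Note that the $q$-commutation $da=qad$ peculiar to $H_n(0,q)$ never intervenes, because $a$ and $d$ already occur in their normal-ordered positions in each basis word.
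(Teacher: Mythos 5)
Your proposal is correct and follows essentially the same route as the paper, whose proof of Corollary~\ref{4.5} simply refers back to the argument for Corollary~\ref{3.5}: spanning of $H_n(0,q)e_{i,j}$ by $\{a^kd^le_{i,j}\}$ via the relations in Lemma~\ref{4.2}, followed by the dimension count from $H_n(0,q)=\oplus_{i,j\in\mathbb{Z}_n}H_n(0,q)e_{i,j}$ and $\dim H_n(0,q)=n^4$. Your explicit bookkeeping of the $q$-exponents (and the observation that $da=qad$ never enters) is a harmless elaboration of what the paper leaves implicit.
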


\begin{proof}
It is similar to Corollary \ref{3.5}.
\end{proof}

For $i, j\in\mathbb{Z}_n$, let $e_{i,j}^{k,l}=a^kd^le_{i,j}$ in $P_{i,j}$.
Using the same symbols as in the last section,
the structure of $P_{i,j}$ can be described as follows:
$$\begin{tikzpicture}[scale=1]
\path (0,0) node(e) {$e_{i,j}^{0,0}$};
\path (-1,-1) node(a) {$e_{i,j}^{1,0}$} (1,-1) node(d) {$e_{i,j}^{0,1}$};
\path (-0.45,-1.35) node(q1) {$q$};
\path (-2,-1.95) node(a2) {$\adots$}
(0,-2) node(ad) {$e_{i,j}^{1,1}$} (2,-2) node(d2) {$\ddots$};
\path (0.6,-2.4) node(q2) {$q$};
\path (-1.5, -2.35) node(1) {$\ddots$} (1.6, -2.4) node(1c) {$\adots$};
\path (-3,-3) node(an-2) {$e_{i,j}^{n-2,0}$}
(-1.14,-3.23) node(2l) {$\cdot$} (-1,-3) node(2) {$\ddots$} (-0.87,-3.01) node(2r) {$\cdot$}
(0.85,-3.235) node(3l) {$\cdot$} (1,-3) node(3) {$\ddots$}
(1.15,-3) node(3r) {$\cdot$} (3,-3) node (dn-2) {$e_{i,j}^{0,n-2}$};
\path (-2.2,-3.3) node(q3) {$q^{n-2}$} (-0.5,-3.5) node(3.5l) {$\ddots$}
(0.5,-3.5) node(3.5c) {$\adots$} (1.5,-3.3) node(q4) {$q$};
\path (-4,-4) node(an-1) {$e_{i,j}^{n-1,0}$} (-2, -4) node(an-2d) {$e_{i,j}^{n-2,1}$}
(-0.15,-4.22) node(4l) {$\cdot$} (0,-4) node(4) {$\ddots$} (0.15,-4) node(4r) {$\cdot$}
(2,-4) node(adn-2) {$e_{i,j}^{1,n-2}$} (4, -4) node(dn-1) {$e_{i,j}^{0,n-1}$};
\path (-3.2,-4.3) node(q5) {$q^{n-1}$} (-1.2,-4.3) node(q6) {$q^{n-2}$}
(-0.5,-4.45) node(4.5l) {$\adots$} (0.5,-4.5) node(4.5r) {$\ddots$}
(2.55,-4.4) node(q7) {$q$};
\path (-3,-5) node(an-1d) {$e_{i,j}^{n-1,1}$}
(-1.15,-5.235) node(5l) {$\cdot$} (-1,-5) node(5) {$\ddots$} (-0.85,-5) node(5r) {$\cdot$}
(0.85,-5.235) node(6l) {$\cdot$} (1,-5) node(6) {$\ddots$}
(1.15, -5) node(6r) {$\cdot$} (3,-5) node (adn-1) {$e_{i,j}^{1,n-1}$};
\path (-2.2,-5.3) node(q8) {$q^{n-1}$} (-1.65,-5.5) node(5.5l) {$\adots$}
(-0.2,-5.3) node(q9) {$q^{n-2}$} (1.5, -5.5) node(5.5r) {$\ddots$};
\path (-2,-6) node(7) {$\ddots$} (0,-6) node(an-2dn-2) {$e_{i,j}^{n-2,n-2}$}
(2,-6) node (9) {$\adots$};
\path (-1.2,-6.3) node(q10) {$q^{n-1}$} (0.8,-6.3) node(q11) {$q^{n-2}$};
\path (-1,-7) node(an-1dn-2) {$e_{i,j}^{n-1,n-2}$\ \ }
(1,-7) node(an-2dn-1) {\ \ $e_{i,j}^{n-2,n-1}$};
\path (-0.2,-7.3) node(q12) {$q^{n-1}$};
\path (0,-8) node(an-1dn-1) {$e_{i,j}^{n-1,n-1}$};
\draw[->] (e) --(a);
\draw[->,dashed] (e) --(d);
\draw[->] (a) --(a2);
\draw[->,dashed] (a) --(ad);
\draw[->] (d) --(ad);
\draw[->,dashed] (d) --(d2);
\draw[->] (a2) --(an-2);
\draw[->] (ad) --(2);
\draw[->,dashed] (ad) --(3);
\draw[->,dashed] (d2) --(dn-2);
\draw[->] (an-2) --(an-1);
\draw[->,dashed] (an-2) --(an-2d);
\draw[->] (2) --(an-2d);
\draw[->,dashed] (3) --(adn-2);
\draw[->] (dn-2) --(adn-2);
\draw[->,dashed] (dn-2) --(dn-1);
\draw[->,dashed] (an-1) --(an-1d);
\draw[->] (an-2d) --(an-1d);
\draw[->,dashed] (an-2d) --(5);
\draw[->] (adn-2) --(6);
\draw[->,dashed] (adn-2) --(adn-1);
\draw[->] (dn-1) --(adn-1);
\draw[->,dashed] (an-1d) --(7);
\draw[->,dashed] (5) --(an-2dn-2);
\draw[->] (6) --(an-2dn-2);
\draw[->] (adn-1) --(9);
\draw[->,dashed] (7) --(an-1dn-2);
\draw[->] (an-2dn-2) --(an-1dn-2);
\draw[->,dashed] (an-2dn-2) --(an-2dn-1);
\draw[->] (9) --(an-2dn-1);
\draw[->,dashed] (an-1dn-2) --(an-1dn-1);
\draw[->] (an-2dn-1) --(an-1dn-1);
\end{tikzpicture}$$

\begin{proposition}\label{4.6}
The $n$ blocks $H_n(0,q)e_i$, $i\in\mathbb{Z}_n$,
are isomorphic to each other.
\end{proposition}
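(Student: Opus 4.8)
The plan is to exhibit a single algebra automorphism of $H_n(0,q)$ that cyclically permutes the central primitive idempotents $e_0,e_1,\dots,e_{n-1}$, so that all $n$ blocks lie in one orbit and are therefore mutually isomorphic. The underlying general fact I would invoke is this: for a central idempotent $e$ of an algebra $H$ and an algebra automorphism $\phi$ of $H$, the image $\phi(e)$ is again a central idempotent, and $\phi$ restricts to an algebra isomorphism $He\ra H\phi(e)$ carrying the unit $e$ of $He$ to the unit $\phi(e)$ of $H\phi(e)$. Thus it suffices to produce a $\phi$ with $\phi(e_i)=e_{i+1}$ for all $i\in\mathbb{Z}_n$.

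First I would define $\phi$ on generators by $\phi(a)=a$, $\phi(b)=q^{-1}b$, $\phi(c)=c$ and $\phi(d)=d$, and check that it respects every defining relation of $H_n(0,q)$ (recall that for $p=0$ these are $ba=qab$, $db=qbd$, $ca=qac$, $dc=qcd$, $bc=cb$, $da=qad$, together with $a^n=d^n=0$, $b^n=c^n=1$). Each relation is preserved: those not involving $b$ are untouched since $a,c,d$ are fixed, while in each relation containing $b$ the extra factor $q^{-1}$ introduced by $\phi(b)$ cancels between the two sides; moreover $\phi(b)^n=q^{-n}b^n=1$ since $q$ is a primitive $n$-th root of unity. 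Hence $\phi$ extends to an algebra endomorphism, and it is visibly invertible (the inverse scales $b$ by $q$), so it is an automorphism.

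The key computation is then $\phi(e_i)=e_{i+1}$. Using the formula $e_i=\frac{1}{n}\sum_{j=0}^{n-1}q^{-ij}b^jc^{-j}$ recorded just before the statement, I would compute $\phi(e_i)=\frac{1}{n}\sum_{j=0}^{n-1}q^{-ij}(q^{-1}b)^jc^{-j}=\frac{1}{n}\sum_{j=0}^{n-1}q^{-(i+1)j}b^jc^{-j}=e_{i+1}$. Since $\{e_i\mid i\in\mathbb{Z}_n\}$ is precisely the full set of central primitive idempotents, $\phi$ permutes them by the $n$-cycle $i\mapsto i+1$. Restricting $\phi$ (and its iterates) to blocks therefore yields algebra isomorphisms $H_n(0,q)e_i\cong H_n(0,q)e_{i+1}$, whence $H_n(0,q)e_i\cong H_n(0,q)e_0$ for every $i\in\mathbb{Z}_n$.

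I do not expect a genuine obstacle here: the only creative step is to guess the correct rescaling (the factor $q^{-1}$ on $b$, or equivalently $q$ on $c$), after which both the relation-check and the idempotent computation are routine. The single point worth stating with care is the general lemma that an algebra automorphism restricts to an isomorphism between the block attached to a central idempotent and the block attached to its image, since this is exactly what converts the cyclic permutation of the $e_i$ into isomorphisms of the blocks.
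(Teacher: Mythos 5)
Your proof is correct, but it takes a genuinely different route from the paper. You construct the rescaling automorphism $\phi$ ($b\mapsto q^{-1}b$, fixing $a,c,d$), verify it preserves all the defining relations of $H_n(0,q)$ (which it does, and your computation $\phi(e_i)=\frac{1}{n}\sum_{j=0}^{n-1}q^{-ij}(q^{-1}b)^jc^{-j}=e_{i+1}$ is right), and then invoke the standard fact that an automorphism carrying one central idempotent to another restricts to an isomorphism of the corresponding blocks. The paper instead proves the statement by identifying every block with one fixed algebra: it shows $\dim H_n(0,q)e_i=n^3$, uses $be_i=q^ice_i$ to produce the basis $\{a^jd^kb^le_i\mid 0\leqslant j,k,l\leqslant n-1\}$, and concludes that each block $H_n(0,q)e_i$ is isomorphic to the subalgebra $B$ of $H_n(0,q)$ generated by $a$, $b$ and $d$. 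Your argument is arguably cleaner and more self-contained: it needs no dimension count and replaces the paper's unproved ``one can easily check'' step with an explicit isomorphism (a winding-type automorphism, which also makes transparent \emph{why} the blocks are permuted transitively). The paper's approach buys something you do not get: a concrete model of the common block as the $n^3$-dimensional algebra $B$ on generators $a,b,d$, which is what feeds into the Gabriel quiver and admissible relations described immediately after the proposition.
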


\begin{proof}
Let $i\in\mathbb{Z}_n$. Since $e_i=\sum_{j=0}^{n-1}e_{i+j,j}$,
$H_n(0,q)e_i=\oplus_{j=0}^{n-1}H_n(0,q)e_{i+j,j}$
as $H_n(0,q)$-modules. Then by Corollary \ref{4.5},
dim$(H_n(0,q)e_i)=n^3$. By Lemma \ref{4.2}, one gets
$be_i=q^ice_i$. It follows that
$H_n(0,q)e_i={\rm span}\{a^jd^kb^le_i|0\leqslant j,k,l\leqslant n-1\}$,
and so $\{a^jd^kb^le_i|0\leqslant j,k,l\leqslant n-1\}$ is a $\mathbb K$-basis
of $H_n(0, q)e_i$. Let $B$ be the subalgebra of $H_n(q)$ generated by
$a$, $b$ and $d$. Then one can easily check that
the block $H_n(0,q)e_i$ is isomorphic, as an algebra,
to the subalgebra $B$ of $H_n(0,q)$. Thus, the proposition follows.
\end{proof}

Let $i\in\mathbb{Z}_n$ be fixed. For any $j\in\mathbb{Z}_n$,
let $\ol{e}_j=e_{i+j,j}$. Then the Gabriel quiver
$Q=(Q_0, Q_1)$
of the block $H_n(0,q)e_i$ is given by
$$\begin{tikzpicture}[auto,bend right,scale=0.6]
\node (e0) at (90:4) {$\ol{e}_0$};
\node (e1) at (40:4) {$\ol{e}_1$};
\node (e2) at (-10:4) {$\ol{e}_2$};
\node (en_1) at (140:4) {$\ol{e}_{n-1}$};
\node (en_2) at (190:4) {$\ol{e}_{n-2}$};
\node (ejplus1) at (220:4) {$\ol{e}_{j+1}$};
\node (ej) at (270:4) {$\ol{e}_j$};
\node (ej_1) at (320:4) {$\ol{e}_{j-1}$};
\draw [<-](e0) to node [swap] {$\beta_0$} (e1);
\draw [<-](e1) to node [swap] {$\alpha_0$} (e0);
\draw [<-](e1) to node [swap] {$\beta_1$} (e2);
\draw [<-](e2) to node [swap] {$\alpha_1$} (e1);
\draw [<-](en_1) to node [swap] {$\beta_{n-1}$} (e0);
\draw [<-](e0) to node [swap] {$\alpha_{n-1}$} (en_1);
\draw [<-](en_2) to node [swap] {$\beta_{n-2}$} (en_1);
\draw [<-](en_1) to node [swap] {$\alpha_{n-2}$} (en_2);
\draw [<-](e0) to node [swap] {$\beta_0$} (e1);
\draw [<-](e1) to node [swap] {$\alpha_0$} (e0);
\draw [<-](ej) to node [swap] {$\alpha_{j-1}$} (ej_1);
\draw [<-](ej_1) to node [swap] {$\beta_{j-1}$} (ej);
\draw [<-](ejplus1) to node [swap] {$\alpha_j$} (ej);
\draw [<-](ej) to node [swap] {$\beta_j$} (ejplus1);
\draw[gray,dashed] (e2) -- (ej_1);
\draw[gray,dashed] (en_2) -- (ejplus1);
\end{tikzpicture}$$
where for $j\in\mathbb{Z}_n$, the arrows $\a_j$, $\b_j$ correspond to $a\ol{e}_j$,
$d\ol{e}_{j+1}$, respectively. The admissible ideal $I$ has the following relations:
$$\b_{j}\a_{j}-q\a_{j-1}\b_{j-1}=0,\ \a_{j+(n-1)}\cdots\a_{j+1}\a_{j}=0,\
\b_{j-(n-1)}\cdots\b_{j-1}\b_{j}=0,\ j\in\mathbb{Z}_n.$$

\begin{proposition}\label{4.7}
$S_{i,j}\ot S_{k,l}\cong S_{i+k,j+l}$ and
$S_{i,j}\ot P_{k,l}\cong P_{k,l}\ot S_{i,j}\cong P_{i+k,j+l}$
for all $i,j,k,l\in\mathbb{Z}_n$.
\end{proposition}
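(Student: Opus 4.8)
The plan is to mirror the proof of Proposition~\ref{3.6} almost verbatim. The key observation is that everything the tensor-product computation relies on is the same for $H_n(0,q)$ as for $\mathcal{H}_n(q)$: the simple modules $S_{i,j}$ and their projective covers $P_{i,j}$ are defined in exactly the same way, the coalgebra structure of $H_n(0,q)$ coincides with that of $\mathcal{H}_n(q)$ (so the coproducts $\t(a)=a\ot b+1\ot a$, $\t(d)=d\ot c+1\ot d$, $\t(b)=b\ot b$, $\t(c)=c\ot c$ are available), and by Corollary~\ref{4.4} the radical $J$ is again a Hopf ideal. Hence the categorical argument transfers directly, even though the underlying algebra (and the detailed structure of the $P_{i,j}$) has changed.

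First I would establish the first isomorphism $S_{i,j}\ot S_{k,l}\cong S_{i+k,j+l}$ by a one-line computation of the action on a generator $v\ot w$: from $\t(b)=b\ot b$ and $\t(c)=c\ot c$ one gets $b(v\ot w)=q^{i+k}(v\ot w)$ and $c(v\ot w)=q^{j+l}(v\ot w)$, while from the skew-primitive coproducts of $a,d$ together with $av=aw=dv=dw=0$ one gets $a(v\ot w)=d(v\ot w)=0$. For the projective statements, the crucial point is that $S_{0,0}$ is the trivial module and that each $S_{i,j}$ is one-dimensional, hence invertible, so $-\ot S_{i,j}$ and $S_{i,j}\ot-$ are auto-equivalences of $H_n(0,q)$-mod preserving both projectivity and indecomposability. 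Therefore $P_{0,0}\ot S_{k,l}$ is indecomposable projective with top $S_{0,0}\ot S_{k,l}\cong S_{k,l}$, forcing $P_{0,0}\ot S_{k,l}\cong P_{k,l}$ (this is also immediate from \cite[Corollary 3.3]{Lo}, since $J$ is a Hopf ideal), and similarly $S_{k,l}\ot P_{0,0}\cong P_{k,l}$. Combining these with the first isomorphism and associativity of $\ot$ yields
$$P_{k,l}\ot S_{i,j}\cong P_{0,0}\ot S_{k,l}\ot S_{i,j}\cong P_{0,0}\ot S_{i+k,j+l}\cong P_{i+k,j+l},$$
and symmetrically $S_{i,j}\ot P_{k,l}\cong S_{i,j}\ot S_{k,l}\ot P_{0,0}\cong S_{i+k,j+l}\ot P_{0,0}\cong P_{i+k,j+l}$; the latter can alternatively be quoted from the proof of \cite[Lemma 3.3]{Cib99}. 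Since both one-sided products equal $P_{i+k,j+l}$, the asserted $S_{i,j}\ot P_{k,l}\cong P_{k,l}\ot S_{i,j}$ follows.

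I expect no serious obstacle, precisely because the proof is categorical and depends only on the (unchanged) coalgebra structure and on $J$ being a Hopf ideal. The one place where $H_n(0,q)$ differs from $\mathcal{H}_n(q)$ is the commutation relations of Lemma~\ref{4.2} ($ae_{i,j}=e_{i+1,j+1}a$ and $de_{i,j}=e_{i-1,j-1}d$, versus $e_{i+1,j}a$ and $e_{i,j-1}d$ in Lemma~\ref{3.2}); this reshapes the internal quiver of each $P_{i,j}$ and the block decomposition, but it plays no role in the tensor-product computation. The only mild point requiring care is confirming that identifying the top of $P_{0,0}\ot S_{k,l}$ pins down its isomorphism type, which is guaranteed by the invertibility of $S_{k,l}$ (so that $-\ot S_{k,l}$ is exact and compatible with the radical filtration).
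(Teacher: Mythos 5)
Your proposal is correct and takes essentially the same route as the paper: the paper proves Proposition~\ref{4.7} by declaring it ``similar to Proposition~\ref{3.6}'', whose argument is exactly your chain $P_{k,l}\ot S_{i,j}\cong P_{0,0}\ot S_{k,l}\ot S_{i,j}\cong P_{0,0}\ot S_{i+k,j+l}\cong P_{i+k,j+l}$, resting on $J$ being a Hopf ideal (Corollary~\ref{4.4}), \cite[Corollary 3.3]{Lo}, and the proof of \cite[Lemma 3.3]{Cib99}. The additional details you supply (the explicit computation for simples and the invertibility of one-dimensional modules making $-\ot S_{k,l}$ an auto-equivalence) merely spell out what the paper leaves implicit.
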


\begin{proof}
It is similar to Proposition \ref{3.6}.
\end{proof}

\begin{proposition}\label{4.8}
Let $i,j,k,l\in\mathbb{Z}_n$. Then $P_{i,j}\ot P_{k,l}\cong \oplus_{t\in\mathbb{Z}_n}nP_{i+k+t,j+l+t}$.
\end{proposition}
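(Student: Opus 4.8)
The plan is to imitate the proof of Proposition \ref{3.7}, the only genuinely new input being a computation of the composition factors of the projective modules, which in the present algebra are more degenerate than in Section \ref{3}. As in that proof, for any module $L$ the module $P_{i,j}\ot L$ is projective (by the same reasoning used there, or because $P_{i,j}\cong S_{i,j}\ot P_{0,0}$ with $S_{i,j}$ invertible), so the functor $P_{i,j}\ot-$ sends every short exact sequence to a split one. Choosing a composition series of $P_{k,l}$ and tensoring it with $P_{i,j}$, an induction on its length yields
$$P_{i,j}\ot P_{k,l}\cong\bigoplus_S (P_{i,j}\ot S),$$
where $S$ ranges over the composition factors of $P_{k,l}$, counted with multiplicity. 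By Proposition \ref{4.7} each summand is again indecomposable projective, $P_{i,j}\ot S_{a,b}\cong P_{i+a,j+b}$, so the whole problem is reduced to listing the composition factors of $P_{k,l}$.

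Next I would read these composition factors off the explicit basis. By Corollary \ref{4.5}, $P_{k,l}$ has $\mathbb K$-basis $\{a^ud^ve_{k,l}\mid 0\<u,v\<n-1\}$, and iterating Lemma \ref{4.2} gives $a^ud^ve_{k,l}=e_{k+u-v,\,l+u-v}a^ud^v$, so that $a^ud^ve_{k,l}$ is a common eigenvector for $b$ and $c$ with eigenvalues $q^{k+u-v}$ and $q^{l+u-v}$. Restricting $P_{k,l}$ to the semisimple group algebra $\mathbb{K}G$ therefore exhibits it as a sum of one-dimensional $G$-modules, the character $(q^{k+u-v},q^{l+u-v})$ occurring once for each pair $(u,v)$. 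Since $S_{a,b}$ restricts to the $G$-character $(q^a,q^b)$ and these characters are pairwise distinct ($q$ being a primitive $n$-th root of unity), the multiplicity $[P_{k,l}:S_{a,b}]$ equals the number of pairs $(u,v)$ with $k+u-v\equiv a$ and $l+u-v\equiv b$ modulo $n$. This vanishes unless $a-k\equiv b-l$, in which case it equals $\#\{(u,v)\mid u-v\equiv a-k\}=n$. Writing $m=a-k$, I obtain $[P_{k,l}]=n\sum_{m\in\mathbb{Z}_n}[S_{k+m,l+m}]$.

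Finally I would assemble the pieces: substituting this list into the splitting above and applying Proposition \ref{4.7} gives
$$P_{i,j}\ot P_{k,l}\cong\bigoplus_{m\in\mathbb{Z}_n}n\,(P_{i,j}\ot S_{k+m,l+m})\cong\bigoplus_{m\in\mathbb{Z}_n}nP_{i+k+m,j+l+m}\cong\bigoplus_{t\in\mathbb{Z}_n}nP_{i+k+t,j+l+t},$$
which is the assertion. I expect the main obstacle to be a conceptual one rather than a computational one: the class of $P_{i,j}\ot P_{k,l}$ in $G_0(H_n(0,q))$ cannot by itself recover the multiplicities, because all indecomposable projectives lying in one block have \emph{equal} class in $G_0(H_n(0,q))$. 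The splitting argument is exactly what circumvents this, producing the indecomposable summands one at a time along a composition series, each already identified by Proposition \ref{4.7}; the technical crux is therefore to verify carefully that every step of the induced filtration splits, which rests on the projectivity of $P_{i,j}\ot S$ for each simple $S$.
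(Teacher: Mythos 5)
Your proposal is correct and is essentially the paper's own argument: the paper proves Proposition \ref{4.8} by citing the method of Proposition \ref{3.7} (tensoring a composition series with a projective splits, then identifying each summand via Proposition \ref{4.7}) together with the key identity $[P_{0,0}]=\sum_{t=0}^{n-1}n[S_{t,t}]$ in $G_0(H_n(0,q))$, which is exactly what your $\mathbb{K}G$-eigenvalue computation from Lemma \ref{4.2} and Corollary \ref{4.5} establishes. Your closing remark that the $G_0$-class alone cannot determine the decomposition (since indecomposable projectives in the same block share a class) is a correct observation about why the splitting along a composition series, rather than a pure Grothendieck-group computation, is the operative mechanism in both your proof and the paper's.
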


\begin{proof}
It is similar to Proposition \ref{3.7}. Note that $[P_{0,0}]=\sum_{t=0}^{n-1}n[S_{t,t}]$
in $G_0(H_n(0,q))$ by Corollaries \ref{4.4} and \ref{4.5}.
\end{proof}

\begin{theorem}\label{4.9}
$r_p(H_n(0,q))\cong \mathbb{Z}[x,y,z]/(x^n-1,y^n-1,z^2-n\sum_{i=0}^{n-1}x^iz)$.
\end{theorem}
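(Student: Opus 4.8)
The plan is to follow the template of Theorem~\ref{3.8} almost verbatim, the one new ingredient being the correct choice of polynomial generators, which is dictated by the shape of the third defining relation. By Propositions~\ref{4.7} and~\ref{4.8} the ring $r_p(H_n(0,q))$ is commutative. The first isomorphism of Proposition~\ref{4.7} shows that the classes $[S_{i,j}]$ form, under tensor product, a group isomorphic to $\mathbb{Z}_n\times\mathbb{Z}_n$, while the second isomorphism gives $[P_{i,j}]=[S_{i,j}][P_{0,0}]$ for all $i,j\in\mathbb{Z}_n$. Hence $r_p(H_n(0,q))$ is generated as a $\mathbb Z$-algebra by the grouplike classes together with $[P_{0,0}]$. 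The key observation is that the relation $z^2-n\sum_{i=0}^{n-1}x^iz$ forces $x$ to be the \emph{diagonal} simple class, so I would set $x=[S_{1,1}]$, $y=[S_{0,1}]$ and $z=[P_{0,0}]$; since $(1,1)$ and $(0,1)$ generate $\mathbb{Z}_n\times\mathbb{Z}_n$, these three classes generate $r_p(H_n(0,q))$, and there is a ring epimorphism $\phi\colon\mathbb{Z}[x,y,z]\ra r_p(H_n(0,q))$ with $\phi(x)=[S_{1,1}]$, $\phi(y)=[S_{0,1}]$ and $\phi(z)=[P_{0,0}]$.

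Next I would check that the defining relations lie in $\mathrm{Ker}(\phi)$. The first two are immediate: $[S_{1,1}]^n=[S_{n,n}]=[S_{0,0}]=1$ and $[S_{0,1}]^n=[S_{0,n}]=1$ by Proposition~\ref{4.7}. For the third, Proposition~\ref{4.8} with $i=j=k=l=0$ gives $[P_{0,0}]^2=\sum_{t=0}^{n-1}n[P_{t,t}]$, and combining $[P_{t,t}]=[S_{t,t}][P_{0,0}]$ with $[S_{t,t}]=[S_{1,1}]^t$ yields $[P_{0,0}]^2=n\sum_{t=0}^{n-1}[S_{1,1}]^t[P_{0,0}]$, i.e. exactly $\phi(z^2-n\sum_{i=0}^{n-1}x^iz)=0$. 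Thus $\phi$ descends to a ring epimorphism $\ol{\phi}\colon\mathbb{Z}[x,y,z]/I\ra r_p(H_n(0,q))$, where $I=(x^n-1,y^n-1,z^2-n\sum_{i=0}^{n-1}x^iz)$.

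Finally, to obtain injectivity I would repeat the free-module argument of Theorem~\ref{3.8}. Using $\ol{x}^n=1$, $\ol{y}^n=1$ and $\ol{z}^2=n\sum_{i}\ol{x}^i\ol{z}$ one reduces every monomial, so $\mathbb{Z}[x,y,z]/I$ is spanned over $\mathbb Z$ by the $2n^2$ elements $\{\ol{x}^i\ol{y}^j,\ \ol{x}^i\ol{y}^j\ol{z}\mid 0\<i,j\<n-1\}$. Since $r_p(H_n(0,q))$ is free of rank $2n^2$ on $\{[S_{i,j}],[P_{i,j}]\}$, I would define a $\mathbb Z$-linear map $\psi\colon r_p(H_n(0,q))\ra\mathbb{Z}[x,y,z]/I$ by inverting the identification above, namely $\psi([S_{i,j}])=\ol{x}^i\ol{y}^{j-i}$ and $\psi([P_{i,j}])=\ol{x}^i\ol{y}^{j-i}\ol{z}$, so that $\ol{\phi}(\ol{x}^i\ol{y}^{j-i})=[S_{i,i}][S_{0,j-i}]=[S_{i,j}]$. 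Checking $\psi\circ\ol{\phi}=\mathrm{id}$ on the spanning monomials then forces those monomials to be $\mathbb Z$-independent and $\ol{\phi}$ to be injective, hence an isomorphism. I expect the only genuine obstacle to be the exponent bookkeeping in this change of variables: one must verify that $(i,j)\mapsto(i,j-i)$ really is the inverse of the group isomorphism sending $x=[S_{1,1}]$ and $y=[S_{0,1}]$ to the diagonal and second coordinate, since it is precisely this twist that turns the naive relation $z^2=n\sum_t(\,[S_{1,0}][S_{0,1}]\,)^tz$ of the Section~\ref{3} presentation into the stated $z^2=n\sum_i x^iz$. Once the generators are pinned down correctly, everything else is routine.
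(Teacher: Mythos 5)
Your proposal is correct and is essentially the paper's own proof: the paper proves Theorem \ref{4.9} by invoking the argument of Theorem \ref{3.8} with the generators $[S_{1,1}]$, $[S_{0,1}]$, $[P_{0,0}]$, which is precisely the template you follow, including the diagonal choice $x=[S_{1,1}]$ forced by Proposition \ref{4.8}. Your explicit change of variables $\psi([S_{i,j}])=\ol{x}^{\,i}\ol{y}^{\,j-i}$, $\psi([P_{i,j}])=\ol{x}^{\,i}\ol{y}^{\,j-i}\ol{z}$ merely spells out the exponent bookkeeping that the paper leaves implicit in the phrase ``similar to Theorem \ref{3.8}.''
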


\begin{proof}
It is similar to Theorem \ref{3.8}. Note that $r_p(H_n(0,q))$ is a commutative ring
generated by $[S_{1,1}]$, $[S_{0,1}]$ and $[P_{0,0}]$.
\end{proof}

Now we consider the projective class algebra $R_p(H_n(0,q))$.
By Theorem \ref{4.9}, we have
$$\begin{array}{c}
R_p(H_n(0,q))\cong \mathbb{K}[x,y,z]/(x^n-1,y^n-1,z^2-n\sum_{i=0}^{n-1}x^iz).\\
\end{array}$$
Put $I=(x^n-1,y^n-1,z^2-n\sum_{i=0}^{n-1}x^iz)$ and let $J( \mathbb{K}[x,y,z]/I)$
be the Jacobson radical of $\mathbb{K}[x,y,z]/I$.
For any $u\in\mathbb{K}[x,y,z]$, let $\ol{u}$ denote the image of $u$
under the canonical projection $\mathbb{K}[x,y,z]\ra\mathbb{K}[x,y,z]/I$.
Then by Theorem \ref{4.9}, $\mathbb{K}[x,y,z]/I$ is of dimension $2n^2$
with a $\mathbb K$-basis $\{\ol{x}^i\ol{y}^j, \ol{x}^i\ol{y}^j\ol{z}|i,j\in\mathbb{Z}_n\}$.
Since $\ol{x}^n=1$ and $\ol{z}^2=n\sum_{i=0}^{n-1}\ol{x}^i\ol{z}$,
one gets $(1-\ol{x})\ol{z}^2=0$, and so $((1-\ol{x})\ol{z})^2=0$.
Consequently, the ideal $((1-\ol{x})\ol{z})$ of  $\mathbb{K}[x,y,z]/I$
generated by $(1-\ol{x})\ol{z}$ is contained in $J( \mathbb{K}[x,y,z]/I)$.
Moreover, dim$((\mathbb{K}[x,y,z]/I)/((1-\ol{x})\ol{z}))=n(n+1)$ and
$$(\mathbb{K}[x,y,z]/I)/((1-\ol{x})\ol{z})\cong\mathbb{K}[x,y,z]/(x^n-1,y^n-1,z^2-n^2z, (1-x)z).$$
Let $\pi: \mathbb{K}[x,y,z]\ra \mathbb{K}[x,y,z]/(x^n-1,y^n-1,z^2-n^2z, (1-x)z)$
be the canonical projection. For any integer $k\>0$, let $f_k=\frac{1}{n}\sum_{i=0}^{n-1}q^{ki}x^i$
and $g_k=\frac{1}{n}\sum_{i=0}^{n-1}q^{ki}y^i$ in $\mathbb{K}[x,y,z]$. Then a straightforward verification shows that
$$\begin{array}{c}
\{\pi(f_kg_l), \pi((f_0-\frac{1}{n^2}z)g_l), \pi(\frac{1}{n^2}zg_l)|1\<k\<n-1, 0\<l\<n-1\}\\
\end{array}$$
is a set of orthogonal idempotents, and so it is a full set of orthogonal primitive idempotents
in $\mathbb{K}[x,y,z]/(x^n-1,y^n-1,z^2-n^2z, (1-x)z)$. Therefore,
$$\mathbb{K}[x,y,z]/(x^n-1,y^n-1,z^2-n^2z, (1-x)z)\cong\mathbb{K}^{n(n+1)}.$$
It follows that $J( \mathbb{K}[x,y,z]/I)\subseteq((1-\ol{x})\ol{z})$,
and so $J( \mathbb{K}[x,y,z]/I)=((1-\ol{x})\ol{z})$.
This shows the following proposition.

\begin{proposition}\label{4.10}
Let $J(R_p(H_n(0,q)))$ be the Jacobson radical of $R_p(H_n(0,q))$.
Then $J(R_p(H_n(0,q)))=((1-[S_{1,1}])[P_{0,0}])$ and
$$\begin{array}{rl}
&R_p(H_n(0,q))/J(R_p(H_n(0,q)))\\
\cong&\mathbb{K}[x,y,z]/(x^n-1,y^n-1,z^2-n^2z, (1-x)z)
\cong{\mathbb K}^{n(n+1)}.\\
\end{array}$$
\end{proposition}

\section{\bf The Projective Class Ring of $H_n(1,q)$}\selabel{5}

In this section, we will study the projective class ring of $H_n(1,q)$.
The finite dimensional indecomposable $H_n(1,q)$-modules are classified in \cite{Ch2, Ch4}.
There are $n^2$ simple modules $V(l,r)$ over $H_n(1,q)$,
where $1\leqslant l\leqslant n$ and $r\in\mathbb{Z}_n$.
The simple modules $V(n,r)$ are both projective and injective.
Let $P(l,r)$ be the projective cover of $V(l,r)$. Then $P(l,r)$
is the injective envelope of $V(l,r)$ as well. Moreover,
$P(n,r)\cong V(n,r)$.

Note that $M\ot N\cong N\ot M$ for any modules $M$ and $N$
since $H_n(1, q)$ is a quasitriangular Hopf algebra.
For any $t\in{\mathbb Z}$, let $c(t):=[\frac{t+1}{2}]$ be the integer part of $\frac{t+1}{2}$.
That is, $c(t)$ is the maximal integer with respect to $c(t)\<\frac{t+1}{2}$.
Then $c(t)+c(t-1)=t$.

{\bf Convention}: If $\oplus_{l\<i\<m}M_i$ is a term in a decomposition of a module,
then it disappears when $l>m$.

\begin{lemma}\label{5.1} Let $1\leqslant l, l'\leqslant n$ and $r, r'\in{\mathbb Z}_n$.\\
$(1)$ $V(1, r)\ot V(l, r')\cong V(l, r+r')$.\\
$(2)$ $V(1, r)\ot P(l, r')\cong P(l, r+r')$.\\
$(3)$ If $l\leqslant l'$ and $l+l'\leqslant n+1$, then
$V(l, r)\ot V(l', r')\cong \oplus_{i=0}^{l-1}V(l+l'-1-2i, r+r'+i)$.\\
$(4)$ If $l\leqslant l'$ and $t=l+l'-(n+1)>0$, then
$$\begin{array}{rcl}
V(l, r)\ot V(l', r')&\cong&(\oplus_{i=c(t)}^tP(l+l'-1-2i, r+r'+i))\\
&&\oplus(\oplus_{t+1\<i\<l-1}V(l+l'-1-2i, r+r'+i)).\\
\end{array}$$
$(5)$ If $l\leqslant l'<n$ and $l+l'\leqslant n$, then
$V(l, r)\ot P(l', r')\cong\oplus_{i=0}^{l-1}P(l+l'-1-2i, r+r'+i)$.\\
$(6)$ If $l\leqslant l'<n$ and $t=l+l'-(n+1)\geqslant 0$, then
$$\begin{array}{rl}
V(l, r)\ot P(l', r')
\cong&(\oplus_{i=c(t)}^t2P(l+l'-1-2i, r+r'+i))\\
&\oplus(\oplus_{i=t+1}^{l-1}P(l+l'-1-2i, r+r'+i)).\\
\end{array}$$
$(7)$ If $l'<l<n$ and $l+l'\leqslant n$, then
$$\begin{array}{rcl}
V(l,r)\ot P(l',r')
&\cong&(\oplus_{i=0}^{l'-1}P(l+l'-1-2i, r+r'+i))\\
&&\oplus(\oplus_{i=c(l+l'-1)}^{l-1}2P(n+l+l'-1-2i, r+r'+i)).\\
\end{array}$$
$(8)$ If $l'<l<n$ and $t=l+l'-(n+1)\geqslant 0$, then
$$\begin{array}{rl}
V(l, r)\ot P(l', r')
\cong&(\oplus_{i=c(t)}^t2P(l+l'-1-2i, r+r'+i))\\
&\oplus(\oplus_{i=t+1}^{l'-1}P(l+l'-1-2i, r+r'+i))\\
&\oplus(\oplus_{i=c(l+l'-1)}^{l-1}2P(n+l+l'-1-2i, r+r'+i)).\\
\end{array}$$
$(9)$ If $l<n$, then
$$\begin{array}{rl}
V(n, r)\ot P(l, r')\cong&(\oplus_{i=c(l-1)}^{l-1}2P(n+l-1-2i, r+r'+i))\\
&\oplus(\oplus_{i=1}^{c(n-l)}2P(l-1+2i, r+r'-i)).\\
\end{array}$$
$(10)$ If $l\<l'<n$ and $l+l'\<n$, then
$$\begin{array}{rl}
P(l, r)\ot P(l', r')
\cong &(\oplus_{i=0}^{l-1}2P(l+l'-1-2i, r+r'+i))\\
&\oplus(\oplus_{i=l'}^{l'+l-1}2P(n+l+l'-1-2i, r+r'+i))\\
&\oplus(\oplus_{c(l'+l-1)\<i\<l'-1}4P(n+l+l'-1-2i, r+r'+i))\\
&\oplus(\oplus_{1\<i\<c(n-l-l')}4P(l+l'-1+2i, r+r'-i)).\\
\end{array}$$
$(11)$ If $l\<l'<n$ and $t=l+l'-(n+1)\geqslant 0$, then
$$\begin{array}{rl}
P(l, r)\ot P(l', r')
\cong&(\oplus_{i=c(t)}^{t}4P(l+l'-1-2i, r+r'+i))\\
&\oplus(\oplus_{i=t+1}^{l-1}2P(l+l'-1-2i, r+r'+i))\\
&\oplus(\oplus_{i=l'}^{n-1}2P(n+l+l'-1-2i, r+r'+i))\\
&\oplus(\oplus_{c(l'+l-1)\<i\<l'-1}4P(n+l+l'-1-2i, r+r'+i)).\\
\end{array}$$
\end{lemma}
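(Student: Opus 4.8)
The plan is to reduce every product to the case $r=r'=0$ and then determine the answer in two stages: first at the level of composition factors in the Grothendieck ring $G_0(H_n(1,q))$, and then lift to the Green ring using projectivity together with rigidity of the module category. Throughout I use the quasitriangularity of $H_n(1,q)$, so that $M\ot N\cong N\ot M$ and only the case $l\<l'$ must be treated. The classification of indecomposables in \cite{Ch2,Ch4} supplies the data I need: $\dim V(l,r)=l$, $\dim P(l,r)=2n$ for $l<n$, $P(n,r)\cong V(n,r)$, and the known Loewy structure of each $P(l,r)$ (simple head and socle $V(l,r)$, with heart a sum of two simples).

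First I would dispose of parts (1) and (2). The one-dimensional modules $V(1,r)$ are exactly the grouplike twists; they close under $\ot$ into a group isomorphic to $\mathbb Z_n$, and $V(1,r)\ot-$ implements the shift $(l,r')\mapsto(l,r+r')$ on both simples and indecomposable projectives. This is immediate from the module structure, and, more importantly, it lets me assume $r=r'=0$ in all remaining parts and restore the shift $+r+r'$ at the end. For part (3), when $l\<l'$ and $l+l'\<n+1$ none of the Clebsch--Gordan labels $l+l'-1-2i$ exceeds $n$, so no projective cover is forced; I would exhibit a singular (highest-weight) vector for the nilpotent generators inside $V(l,0)\ot V(l',0)$ generating each summand $V(l+l'-1-2i,i)$, check that these generate a direct sum of the claimed simples, and close with the dimension count $\sum_{i=0}^{l-1}(l+l'-1-2i)=ll'$; equivalently one verifies the identity in $G_0$ and notes the module is semisimple because all weights remain in the unlinked range.

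For parts (4)--(11) the decisive structural input is that projectives form a tensor ideal: since $H\ot M$ is free for every $M$, tensoring a projective with anything is again projective, so each of these products is a direct sum of indecomposable projectives $P(m,s)$ and only the multiplicities are at issue. I would pin these down via the head-multiplicity formula $a_{m,s}=\dim\mathrm{Hom}_H(X,V(m,s))$ for a projective $X$, computed through the rigidity adjunction $\mathrm{Hom}(M\ot N,V)\cong\mathrm{Hom}(N,M^*\ot V)$, which reduces each case to an already established one, with the total dimension as a running check. The pairing of the ``overflow'' composition factors into single projectives is governed by $c(t)+c(t-1)=t$: the factors indexed $c(t)\<i\<t$ fuse in consecutive pairs into the heart of one projective, which explains why the ranges split at $i=t$ in (4), (6), (8), and at both $i=t$ and $i=l'$ in (11).

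The hard part will be the bookkeeping in the doubly-projective cases (10), (11) and in (9), where the labels wrap around modulo $n$ and the multiplicities $2$ and $4$ appear. Here one must carefully separate the ``direct'' Clebsch--Gordan range, the ``reflected'' range $n+l+l'-1-2i$ produced by factors that overshoot $n$, and the wrapped range $l+l'-1+2i$ carrying the shift $-i$; getting the exact endpoints right (for instance $c(l'+l-1)\<i\<l'-1$ for the weight-$4$ block and $1\<i\<c(n-l-l')$ for the wrapped block) and confirming that the contributions assemble without overlap to the correct total dimension (namely $4n^2$ in (10) and (11)) is the delicate combinatorial core. All of this bookkeeping is carried out in \cite{ChenHassenSun}, to which I would appeal for the final verification.
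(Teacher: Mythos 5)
Your proposal is correct and ultimately takes the same route as the paper: the paper's entire proof of this lemma is a citation of \cite{Ch2, ChenHassenSun}, where these decompositions are established, and your argument likewise defers the decisive combinatorial verification (parts (4), (9)--(11)) to \cite{ChenHassenSun}. The scaffolding you add --- reduction via the one-dimensional modules $V(1,r)$, the tensor-ideal property of projectives, head multiplicities via the rigidity adjunction $\mathrm{Hom}(M\ot N,V)\cong\mathrm{Hom}(N,M^*\ot V)$, and dimension counts --- is sound and consistent with how those references proceed.
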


\begin{proof} It follows from \cite{Ch2, ChenHassenSun}.
\end{proof}

By Lemma \ref{5.1} or \cite[Corollary 3.2]{ChenHassenSun},
the category consisting of semisimple modules and projective modules in
$H_n(1,q)$-mod is a monoidal subcategory of $H_n(1,q)$-mod.
Therefore, we have the following corollary.

\begin{corollary}\label{5.2}
$r_p(H_n(1,q))$ is a free $\mathbb Z$-module with a $\mathbb Z$-basis
$\{[V(k,r)], [P(l,r)]|1\leqslant k\leqslant n, 1\leqslant l\leqslant n-1, r\in\mathbb{Z}_n\}$.
\end{corollary}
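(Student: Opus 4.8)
The plan is to establish Corollary~\ref{5.2} by showing that the projective class ring $r_p(H_n(1,q))$ is precisely the $\mathbb Z$-span of the indicated classes and that these classes are $\mathbb Z$-linearly independent. First I would recall from Section~\ref{2} that $r_p(H_n(1,q))$ is by definition the subring of the Green ring $r(H_n(1,q))$ generated by the simple modules and the indecomposable projective modules. The simple modules are exactly the $V(k,r)$ with $1\leqslant k\leqslant n$ and $r\in\mathbb Z_n$, and the indecomposable projective modules are the $P(l,r)$; moreover $P(n,r)\cong V(n,r)$, so to avoid redundancy the projective generators contributing genuinely new basis elements are those with $1\leqslant l\leqslant n-1$. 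Thus the stated set $\{[V(k,r)],[P(l,r)]\mid 1\leqslant k\leqslant n,\ 1\leqslant l\leqslant n-1,\ r\in\mathbb Z_n\}$ is exactly the list of distinct classes of simple and indecomposable projective modules, and these classes are automatically $\mathbb Z$-linearly independent because $r(H_n(1,q))$ is free on the classes of \emph{all} indecomposable modules (as noted in Section~\ref{2}) and our list consists of pairwise non-isomorphic indecomposables.

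The substantive point to verify is that the $\mathbb Z$-span of this set is closed under the ring multiplication, i.e.\ that it is actually a subring and hence equals $r_p(H_n(1,q))$. This is where Lemma~\ref{5.1} does the real work: I would observe that parts (1)--(11) of Lemma~\ref{5.1} together express every product of two generators --- simple times simple, simple times projective, and projective times projective --- as a $\mathbb Z_{\geqslant 0}$-linear combination of classes $[V(k,r)]$ and $[P(l,r)]$ lying in our set. Concretely, (1) and (3)--(4) handle $V\otimes V$, giving simples and projectives; (2) and (5)--(9) handle $V\otimes P$, giving only projectives; and (10)--(11) handle $P\otimes P$, again giving only projectives. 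Since the multiplication is commutative (because $H_n(1,q)$ is quasitriangular, so $M\otimes N\cong N\otimes M$), these cases exhaust all products of generators, and closure under multiplication follows.

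The only mild subtlety, which I would address explicitly, is that the decompositions in Lemma~\ref{5.1} occasionally produce modules $P(m,s)$ with the index $m$ reduced modulo the constraint $1\leqslant m\leqslant n$, or produce $V(n,s)$, which equals $P(n,s)$; I would check that in every case the summands indeed land among the listed generators (using $P(n,s)\cong V(n,s)$ and the Convention that empty ranges disappear), so that no class outside the given set ever appears. This bookkeeping is routine rather than deep. Hence the main --- and essentially the only --- obstacle is simply invoking the already-established classification of indecomposables and the tensor product formulas of Lemma~\ref{5.1}; with these in hand, the corollary follows immediately, since freeness of $r(H_n(1,q))$ supplies independence and Lemma~\ref{5.1} supplies multiplicative closure.
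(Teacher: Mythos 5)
Your proposal is correct and follows essentially the same route as the paper: the paper deduces the corollary from Lemma~\ref{5.1} (equivalently \cite[Corollary 3.2]{ChenHassenSun}), which shows that semisimple and projective modules form a monoidal subcategory, exactly your multiplicative-closure argument, with linear independence coming from the freeness of the Green ring on indecomposables. Your explicit bookkeeping about $P(n,r)\cong V(n,r)$ and the index ranges is just a spelled-out version of what the paper leaves implicit.
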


\begin{lemma}\label{5.3}
Let $2\leqslant m\leqslant n-1$. Then
$$\begin{array}{c}
V(2,0)^{\ot m}\cong\oplus_{i=0}^{[\frac{m}{2}]}
\frac{m-2i+1}{m-i+1}\binom{m}{i}V(m+1-2i, i).\\
\end{array}$$
\end{lemma}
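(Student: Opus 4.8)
The plan is to prove the decomposition of $V(2,0)^{\ot m}$ by induction on $m$, using the tensor product rules for $V(2,0)\ot V(k,r)$ supplied by \leref{5.1}. The base case $m=2$ follows directly from \leref{5.1}(3): since $l=l'=2$ and $l+l'=4\<n+1$ (using $n\>3$), we get $V(2,0)^{\ot 2}\cong V(3,0)\oplus V(1,1)$, which matches the claimed formula for $m=2$. The heart of the argument is the inductive step: assuming the formula for $m-1$, I would compute $V(2,0)^{\ot m}\cong V(2,0)\ot V(2,0)^{\ot(m-1)}$ by distributing $V(2,0)\ot(-)$ across the direct sum given by the inductive hypothesis.

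The key computation is that for each summand $V(k, i)$ appearing in $V(2,0)^{\ot(m-1)}$, tensoring on the left with $V(2,0)$ produces $V(k+1, i)\oplus V(k-1, i+1)$. Here I must be careful to use the correct part of \leref{5.1}: when $k<n$ (equivalently $k+2\<n+1$) part (3) with $l=2$, $l'=k$ gives $V(2,0)\ot V(k,i)\cong V(k+1,i)\oplus V(k-1,i+1)$; the hypothesis $m\<n-1$ guarantees that the largest simple factor $V(m,\cdot)$ appearing at stage $m-1$ has first index at most $m\<n-1$, so $k+2\<n+1$ holds throughout and no projective modules arise. Summing these contributions, the coefficient of $V(m+1-2i, i)$ in $V(2,0)^{\ot m}$ receives a ``raising'' contribution from $V(m-2i,i)$ and a ``lowering'' contribution from $V(m-2i+2, i-1)$ at stage $m-1$. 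This yields the recurrence
$$
\frac{m-2i+1}{m-i+1}\binom{m}{i}=\frac{(m-1)-2i+1}{(m-1)-i+1}\binom{m-1}{i}+\frac{(m-1)-2(i-1)+1}{(m-1)-(i-1)+1}\binom{m-1}{i-1},
$$
which is precisely the Catalan-triangle (ballot-number) identity; I would verify it by a routine manipulation of binomial coefficients.

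The main obstacle will be the bookkeeping at the boundary of the index range, namely checking that the first and last terms of the sum behave correctly and that the disappearance convention is respected. Specifically, when $i=0$ there is no lowering contribution, and when $i=[\frac{m}{2}]$ one must check that the raising contribution from the bottom of the previous range lands correctly and that no term with a nonpositive first index $m+1-2i$ is spuriously created; the ``Convention'' preceding \leref{5.1} handles the vanishing of empty summands. I would also confirm that the numerical coefficients $\frac{m-2i+1}{m-i+1}\binom{m}{i}$ are genuine nonnegative integers (the Catalan triangle entries), so that the right-hand side is an honest decomposition into indecomposables. Once the binomial recurrence is confirmed and the boundary indices are matched against the convention, the induction closes and the lemma follows.
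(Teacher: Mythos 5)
Your proposal is correct and follows essentially the same route as the paper's own proof: induction on $m$, applying \leref{5.1}(3) to each simple summand of $V(2,0)^{\ot(m-1)}$ and closing the induction via the ballot-number recurrence $\frac{m-2i+1}{m-i+1}\binom{m}{i}=\frac{m-2i}{m-i}\binom{m-1}{i}+\frac{m-2i+2}{m-i+1}\binom{m-1}{i-1}$, which is exactly the coefficient identity the paper verifies (split there into even and odd $m$, mainly to handle the boundary summand $V(1,\cdot)$, which you also flag). The only cosmetic difference is that the paper checks $m=2$ and $m=3$ as base cases and treats the $V(1,\cdot)$ term separately via $V(2,0)\ot V(1,l)\cong V(2,l)$, whereas you absorb this into your boundary discussion.
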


\begin{proof}
By Lemma \ref{5.1}(3), one can easily check that the isomorphism in the lemma
holds for $m=2$ and $m=3$.
Now let $3<m\leqslant n-1$ and assume
$$\begin{array}{c}
V(2,0)^{\ot(m-1)}\cong \oplus_{i=0}^{[\frac{m-1}{2}]}
\frac{m-2i}{m-i}\binom{m-1}{i}V(m-2i, i).\\
\end{array}$$
If $m=2l$ is even, then by the induction hypothesis and Lemma \ref{5.1}(3), we have
$$\begin{array}{rcl}
V(2,0)^{\ot m}&=&V(2, 0)\ot V(2,0)^{\ot(m-1)}\\
&\cong&\oplus_{i=0}^{l-1}
\frac{2l-2i}{2l-i}\binom{2l-1}{i}V(2, 0)\ot V(2l-2i, i)\\
&\cong&\oplus_{i=0}^{l-1}\frac{2l-2i}{2l-i}\binom{2l-1}{i}(V(2l+1-2i, i)\oplus V(2l-1-2i, i+1))\\
&\cong& V(2l+1,0)\oplus\frac{2}{l+1}\binom{2l-1}{l-1}V(1, l)\\
&&\oplus(\oplus_{i=1}^{l-1}(\frac{2l-2i}{2l-i}
\binom{2l-1}{i}+\frac{2l-2i+2}{2l-i+1}\binom{2l-1}{i-1})V(2l+1-2i, i))\\
&\cong& V(2l+1,0)\oplus\frac{2}{l+1}\binom{2l-1}{l-1}V(1, l)\\
&&\oplus(\oplus_{i=1}^{l-1}\frac{2l+1-2i}{2l+1-i}
\binom{2l}{i}V(2l+1-2i, i))\\
&\cong&\oplus_{i=0}^l\frac{2l+1-2i}{2l+1-i}
\binom{2l}{i}V(2l+1-2i, i))\\
&\cong&\oplus_{i=0}^{[\frac{m}{2}]}\frac{m+1-2i}{m+1-i}
\binom{m}{i}V(m+1-2i, i).\\
\end{array}$$

If $m=2l+1$ is odd, then by the same reason as above, we have
$$\begin{array}{rl}
&V(2,0)^{\ot m}\\
=&V(2, 0)\ot V(2,0)^{\ot(m-1)}\\
\cong&\oplus_{i=0}^l\frac{2l+1-2i}{2l+1-i}
\binom{2l}{i}V(2, 0)\ot V(2l+1-2i, i)\\
\cong&(\oplus_{i=0}^{l-1}\frac{2l+1-2i}{2l+1-i}
\binom{2l}{i}V(2l+2-2i, i)\oplus V(2l-2i, i+1))
\oplus\frac{1}{l+1}\binom{2l}{l}V(2, l)\\
\cong&(\oplus_{i=0}^{l}\frac{2l+1-2i}{2l+1-i}
\binom{2l}{i}V(2l+2-2i, i))
\oplus(\oplus_{i=0}^{l-1}\frac{2l+1-2i}{2l+1-i}
\binom{2l}{i}V(2l-2i, i+1))\\
\cong&(\oplus_{i=0}^{l}\frac{2l+1-2i}{2l+1-i}
\binom{2l}{i}V(2l+2-2i, i))
\oplus(\oplus_{i=1}^{l}\frac{2l+3-2i}{2l+2-i}
\binom{2l}{i-1}V(2l+2-2i, i))\\
\cong& V(2l+2,0)
\oplus(\oplus_{i=1}^l(\frac{2l+1-2i}{2l+1-i}\binom{2l}{i}
+\frac{2l+3-2i}{2l+2-i}\binom{2l}{i-1})V(2l+2-2i, i))\\
\cong& V(2l+2,0)
\oplus(\oplus_{i=1}^l\frac{2l+2-2i}{2l+2-i}\binom{2l+1}{i}V(2l+2-2i, i))\\
\cong&\oplus_{i=0}^l\frac{2l+2-2i}{2l+2-i}\binom{2l+1}{i}V(2l+2-2i, i)\\
\cong& \oplus_{i=0}^{[\frac{m}{2}]}\frac{m+1-2i}{m+1-i}\binom{m}{i}V(m+1-2i, i).\\
\end{array}$$
\end{proof}

Throughout the following, let $x=[V(1,1)]$ and $y=[V(2,0)]$ in
$r_p(H_n(1,q))$.

\begin{corollary}\label{5.4}
The following equations hold in $r_p(H_n(1,q))$ (or $r(H_n(1,q))$):\\
$(1)$ $x^n=1$ and $[V(m,i)]=x^i[V(m,0)]$
for all $1\<m\<n$ and $i\in\mathbb Z$;\\
$(2)$ $[P(m,i)]=x^i[P(m,0)]$ for all $1\<m<n$ and $i\in\mathbb Z$;\\
$(3)$ $y[V(n,0)]=x[P(n-1,0)]$;\\
$(4)$ $y[P(1,0)]=[P(2,0)]+2x[V(n,0)]$;\\
$(5)$ $y[P(n-1,0)]=2[V(n,0)]+x[P(n-2,0)]$;\\
$(6)$ $y[P(m,0)]=[P(m+1,0)]+x[P(m-1,0)]$ for all $2\<m\<n-2$;\\
$(7)$ $[V(m+1,0)]=y^{m}-\sum_{i=1}^{[\frac{m}{2}]}\frac{m+1-2i}{m+1-i}\binom{m}{i}
x^i[V(m+1-2i,0)]$ for all $2\<m<n$.
\end{corollary}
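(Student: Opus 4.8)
The plan is to read each identity directly off the tensor‑product decompositions in Lemma~\ref{5.1} (and Lemma~\ref{5.3} for part $(7)$), pass to classes in $r_p(H_n(1,q))$, and then use parts $(1)$ and $(2)$ to absorb every shift in the second index into a power of $x$. Thus $(1)$ and $(2)$ are the bookkeeping tools on which the rest depends, so I would prove them first. Since $V(1,0)$ is the unit object, Lemma~\ref{5.1}$(1)$ gives $V(1,1)^{\ot n}\cong V(1,n)=V(1,0)$, so $x^n=1$; iterating $V(1,1)\ot V(m,0)\cong V(m,1)$ yields $[V(m,i)]=x^i[V(m,0)]$, which is $(1)$. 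Part $(2)$ is the identical argument applied to $V(1,i)\ot P(m,0)\cong P(m,i)$ from Lemma~\ref{5.1}$(2)$.

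For $(3)$–$(6)$ I would compute the product $y[\,\cdot\,]=[V(2,0)\ot(\,\cdot\,)]$ by selecting the case of Lemma~\ref{5.1} dictated by the inequalities satisfied by $(l,l')$ with $l=2$ and by $n$, and then apply $(1)$–$(2)$ together with $P(n,r)\cong V(n,r)$. Concretely: for $(3)$, the pair $(l,l')=(2,n)$ has $t=l+l'-(n+1)=1>0$, so Lemma~\ref{5.1}$(4)$ applies and leaves only the single summand $P(n-1,1)$, the second (semisimple) sum being empty by the Convention; for $(5)$, $(l,l')=(2,n-1)$ has $t=0$ and $l'<n$, so Lemma~\ref{5.1}$(6)$ gives $2P(n,0)\oplus P(n-2,1)$; for $(6)$, $(l,l')=(2,m)$ with $2\<m\<n-2$ satisfies $l\<l'<n$ and $l+l'\<n$, so Lemma~\ref{5.1}$(5)$ gives $P(m+1,0)\oplus P(m-1,1)$; and for $(4)$, $(l,l')=(2,1)$ has $l'<l<n$ and $l+l'\<n$, so Lemma~\ref{5.1}$(7)$ gives $P(2,0)\oplus 2P(n,1)=P(2,0)\oplus 2V(n,1)$. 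Passing to classes and rewriting the shifted indices via $(1)$ and $(2)$ turns these four isomorphisms into exactly the stated equations.

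For $(7)$ I would simply take classes in Lemma~\ref{5.3}: writing $[V(m+1-2i,i)]=x^i[V(m+1-2i,0)]$ by $(1)$, the class $y^m=[V(2,0)^{\ot m}]$ becomes $\sum_{i=0}^{[\frac{m}{2}]}\frac{m+1-2i}{m+1-i}\binom{m}{i}x^i[V(m+1-2i,0)]$, whose $i=0$ term is $[V(m+1,0)]$; isolating that term and moving the remaining terms to the other side gives $(7)$.

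The routine part is the algebra of shifting indices; the one place requiring care is the case selection in $(3)$–$(6)$, since Lemma~\ref{5.1} has many overlapping branches distinguished by the signs of $l-l'$, $l+l'-n$, and $t$. The main obstacle I anticipate is the bookkeeping at the boundary values—correctly evaluating $c(t)$ and the summation ranges so that the degenerate sums vanish in accordance with the Convention, and confirming that each chosen branch is the legitimate one for every $n>2$ (for instance, checking the edge case $n=3$ in $(4)$ and the lower limit $m=2$ in $(6)$).
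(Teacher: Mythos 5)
Your proposal is correct and takes essentially the same route as the paper, whose proof of this corollary is exactly the observation that all seven identities follow from Lemmas 5.1 and 5.3; your case selections in Lemma 5.1 (branch (4) with $t=1$ for item (3), branch (6) with $t=0$ for item (5), branch (5) for item (6), and branch (7) together with $P(n,r)\cong V(n,r)$ for item (4)) and the index-shifting via items (1) and (2) are all accurate.
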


\begin{proof}
It follows from Lemmas \ref{5.1} and \ref{5.3}.
\end{proof}

\begin{proposition}\label{5.5}
The commutative ring $r_p(H_n(1,q))$ is generated by $x$ and $y$.
\end{proposition}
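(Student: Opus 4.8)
The plan is to show that every element of the $\mathbb{Z}$-basis of $r_p(H_n(1,q))$ exhibited in Corollary~\ref{5.2} can be written as a $\mathbb{Z}$-polynomial in $x=[V(1,1)]$ and $y=[V(2,0)]$. Since $r_p(H_n(1,q))$ is commutative (because $H_n(1,q)$ is quasitriangular), it suffices to produce such expressions for the basis classes $[V(k,r)]$ with $1\le k\le n$, $r\in\mathbb{Z}_n$, and $[P(l,r)]$ with $1\le l\le n-1$, $r\in\mathbb{Z}_n$. By Corollary~\ref{5.4}(1) and (2) we have $[V(m,r)]=x^{r}[V(m,0)]$ and $[P(m,r)]=x^{r}[P(m,0)]$, so the twist in the second index is absorbed by powers of $x$, and the problem reduces to expressing the untwisted classes $[V(m,0)]$ for $1\le m\le n$ and $[P(m,0)]$ for $1\le m\le n-1$ in terms of $x$ and $y$. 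Throughout I will use that $x$ is invertible with $x^{-1}=x^{n-1}$, which holds by $x^{n}=1$ from Corollary~\ref{5.4}(1).

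First I would handle the simple classes. Here $[V(1,0)]=1$ is the identity and $[V(2,0)]=y$ by definition, while Corollary~\ref{5.4}(7) gives a recursion expressing $[V(m+1,0)]$ as $y^{m}$ minus a $\mathbb{Z}[x]$-combination of the strictly smaller classes $[V(m+1-2i,0)]$ with $i\ge 1$. An immediate induction on $m$ then writes every $[V(m,0)]$, $1\le m\le n$ (in particular $[V(n,0)]$), as a polynomial in $x$ and $y$. Next I would treat the projective classes by descending induction. Corollary~\ref{5.4}(3) gives $[P(n-1,0)]=x^{n-1}y[V(n,0)]$, which is a polynomial in $x,y$ by the previous step; solving Corollary~\ref{5.4}(5) for $[P(n-2,0)]=x^{n-1}\bigl(y[P(n-1,0)]-2[V(n,0)]\bigr)$ then expresses $[P(n-2,0)]$ as well. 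Finally, rewriting the three-term recursion of Corollary~\ref{5.4}(6) as $[P(m-1,0)]=x^{n-1}\bigl(y[P(m,0)]-[P(m+1,0)]\bigr)$ for $2\le m\le n-2$ lets me descend from $[P(n-1,0)]$ and $[P(n-2,0)]$ all the way to $[P(1,0)]$, expressing each $[P(m,0)]$ as a polynomial in $x$ and $y$. This exhausts the basis of Corollary~\ref{5.2}, so $x$ and $y$ generate $r_p(H_n(1,q))$.

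The computation is essentially a bookkeeping exercise once the recursions are in place, so I do not expect a serious obstacle; the only point requiring care is that the projective recursion is genuinely solvable, i.e.\ that one can always isolate the desired class. This is exactly where the invertibility of $x$ enters: each of Corollary~\ref{5.4}(3),(5),(6) contains the target class multiplied by a unit power of $x$, so the recursion can be run in the indicated direction. The other subtlety is to interface the two inductions correctly — the simple-module induction must be completed first, so that $[V(n,0)]$ is available as the input to the projective base cases coming from Corollary~\ref{5.4}(3) and (5). I note that Corollary~\ref{5.4}(4) is not needed for generation; it records an extra identity that, together with the relations used here, will be relevant only when one later determines the defining relations of $r_p(H_n(1,q))$.
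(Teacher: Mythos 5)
Your proposal is correct and follows essentially the same route as the paper's own proof: absorb the second index into powers of $x$ via Corollary~\ref{5.4}(1),(2), express the simple classes $[V(m,0)]$ by upward induction using the recursion in Corollary~\ref{5.4}(7), and then obtain the projective classes $[P(m,0)]$ by downward induction starting from Corollary~\ref{5.4}(3),(5) and descending with Corollary~\ref{5.4}(6), concluding by the basis of Corollary~\ref{5.2}. Your observation that Corollary~\ref{5.4}(4) is not needed here also matches the paper, where that relation is used only later (in Proposition~\ref{5.7}) to determine the defining relations.
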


\begin{proof}
Let $R$ be the subring of $r(H_n(1,q))$ generated by $x$ and $y$.
Then $R\subseteq r_p(H_n(1,q))$. By Corollary \ref{5.4}(1), one gets that
$[V(1,i)]=x^i\in R$ and $[V(2,i)]=x^iy\in R$ for all $i\in\mathbb{Z}_n$.
Now let $2\<m<n$ and assume $[V(l,i)]\in R$ for all $1\<l\<m$ and $i\in\mathbb{Z}_n$.
Then by Corollary \ref{5.4}(1) and (7), one gets that
$[V(m+1,i)]=x^i[V(m+1,0)]=x^iy^{m}-\sum_{j=1}^{[\frac{m}{2}]}\frac{m+1-2j}{m+1-j}\binom{m}{j}
x^{i+j}[V(m+1-2j,0)]\in R$
for all $i\in\mathbb{Z}_n$. Thus, we have proven that
$[V(m,i)]\in R$ for all $1\<m\<n$ and $i\in\mathbb{Z}_n$.
In particular, $[V(n,i)]\in R$ for all $i\in\mathbb{Z}_n$.

By Corollary \ref{5.4}(2) and (3), $[P(n-1, i)]=x^i[P(n-1,0)]=x^{i-1}y[V(n,0)]\in R$ for all $i\in\mathbb{Z}_n$.
Then by Corollary \ref{5.4}(2) and (5), $[P(n-2, i)]=x^i[P(n-2,0)]=x^{i-1}(y[P(n-1, 0)]-2[V(n, 0)])\in R$
for any $i\in\mathbb{Z}_n$. Now let $1<m\<n-2$ and assume that
$[P(l,i)]\in R$ for all $m\<l<n$ and $i\in\mathbb{Z}_n$.
Then by Corollary \ref{5.4}(2) and (6), we have
$[P(m-1, i)]=x^i[P(m-1,0)]=x^{i-1}(y[P(m, 0)]-[P(m+1, 0)])\in R$.
Thus, we have shown that $[P(m,i)]\in R$ for all $1\<m<n$ and
$i\in\mathbb{Z}_n$. Then it follows from Corollary \ref{5.2} that
$R=r_p(H_n(1,q))$. This completes the proof.
\end{proof}

\begin{lemma}\label{5.6}
$(1)$ $[V(m,0)]=\sum_{i=0}^{[\frac{m-1}{2}]}(-1)^i\binom{m-1-i}{i}x^iy^{m-1-2i}$ for all $1\<m\<n$.\\
$(2)$ Let $1\<m\<n-1$. Then
$$\begin{array}{c}
[P(m,0)]=(\sum_{i=0}^{[\frac{n-m}{2}]}(-1)^i\frac{n-m}{n-m-i}\binom{n-m-i}{i}x^{m+i}y^{n-m-2i})[V(n,0)].\\
\end{array}$$
\end{lemma}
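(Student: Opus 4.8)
The plan is to prove both formulas by induction, in each case first extracting a clean three-term recurrence among the relevant classes and then verifying that the proposed closed form satisfies it by means of a binomial identity. A useful guiding remark is that the two families $[V(m,0)]$ and $x^{-m}[P(m,0)]\cdot[V(n,0)]^{-1}$ satisfy the \emph{same} recursion $p_{k+1}=yp_k-xp_{k-1}$ and differ only in their initial data; they are normalized Chebyshev polynomials of the second and first kind in the variables $x,y$, which is why the same elementary machinery handles both parts.

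For part $(1)$, I would first produce the recursion. Applying Lemma \ref{5.1}$(3)$ with $l=2$ and $l'=m$ (valid for $2\<m\<n-1$) gives $V(2,0)\ot V(m,0)\cong V(m+1,0)\oplus V(m-1,1)$, so in $r_p(H_n(1,q))$ we have $y[V(m,0)]=[V(m+1,0)]+[V(m-1,1)]$. Combining this with $[V(m-1,1)]=x[V(m-1,0)]$ from Corollary \ref{5.4}$(1)$ yields
$$[V(m+1,0)]=y[V(m,0)]-x[V(m-1,0)],\quad 2\<m\<n-1.$$
With the base cases $[V(1,0)]=1$ and $[V(2,0)]=y$, I would then check by induction that the proposed sum satisfies this recurrence; after matching coefficients of $x^iy^{m-2i}$, the inductive step reduces to Pascal's identity $\binom{m-1-i}{i}+\binom{m-1-i}{i-1}=\binom{m-i}{i}$.

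For part $(2)$, set $f_k:=\sum_{i=0}^{[\frac{k}{2}]}(-1)^i\frac{k}{k-i}\binom{k-i}{i}x^iy^{k-2i}$, so the claim is $[P(m,0)]=x^m f_{n-m}[V(n,0)]$ for $1\<m\<n-1$. I would argue by downward induction on $m$, using $x^n=1$ (hence $x^{-1}=x^{n-1}$). The base cases are $m=n-1$, which is Corollary \ref{5.4}$(3)$ giving $[P(n-1,0)]=x^{n-1}y[V(n,0)]=x^{n-1}f_1[V(n,0)]$, and $m=n-2$, obtained from Corollary \ref{5.4}$(5)$ together with the $m=n-1$ case, giving $[P(n-2,0)]=x^{n-2}(y^2-2x)[V(n,0)]=x^{n-2}f_2[V(n,0)]$. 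For the inductive step I would substitute the hypotheses for $m$ and $m+1$ into Corollary \ref{5.4}$(6)$, rewritten as $[P(m-1,0)]=x^{-1}(y[P(m,0)]-[P(m+1,0)])$, which reduces the claim to the recurrence $f_{k+1}=yf_k-xf_{k-1}$ with $k=n-m$.

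The main obstacle is the weighted binomial identity underlying this last recurrence, namely
$$\frac{k+1}{k+1-i}\binom{k+1-i}{i}=\frac{k}{k-i}\binom{k-i}{i}+\frac{k-1}{k-i}\binom{k-i}{i-1}.$$
I expect to prove it by clearing denominators: factoring $\frac{(k-i-1)!}{i!(k+1-2i)!}$ out of the right-hand side leaves the bracket $k(k-2i+1)+(k-1)i=(k+1)(k-i)$, which recombines into the left-hand side. Beyond this identity, the only care required is bookkeeping at the boundary terms $i=0$ and $i=[\frac{k}{2}]$ (using the stated convention that out-of-range summands vanish), together with checking that the ranges of validity of Corollary \ref{5.4}$(3)$, $(5)$ and $(6)$ exactly cover $1\<m\<n-1$; relation $(4)$ then becomes an automatic consistency check and is not needed for the argument.
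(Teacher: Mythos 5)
Your proposal is correct and takes essentially the same route as the paper: part (2) is the same downward induction on $m$ with base cases $m=n-1$ and $m=n-2$ from Corollary \ref{5.4}(1),(3),(5), inductive step from Corollary \ref{5.4}(6), and the same weighted binomial identity (which the paper verifies by splitting into the parity cases of $n-m$, while you clear denominators once — a purely cosmetic difference, as is your Chebyshev-style packaging via $f_k$). For part (1) the paper merely cites \cite{ZWLC}, and the recursion-plus-Pascal argument you supply is exactly the expected one, with the correct range check on Lemma \ref{5.1}(3).
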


\begin{proof}
(1) It is similar to \cite[Lemma 3.2]{ZWLC}.\\
(2) Note that $\frac{n-m}{n-m-i}\binom{n-m-i}{i}$ is a positive integer for any
$1\<m\<n-1$ and $0\<i\<[\frac{n-m}{2}]$. We prove the equality by induction on $n-m$. If $m=n-1$, then
by Corollary \ref{5.4}(1) and (3), $[P(n-1,0)]=x^{-1}y[V(n,0)]=x^{n-1}y[V(n,0)]$,
as desired. If $m=n-2$, then by Corollary \ref{5.4}(1) and (5), we have
$[P(n-2,0)]=x^{-1}y[P(n-1,0)]-2x^{-1}[V(n,0)]=(x^{n-2}y^2-2x^{n-1})[V(n,0)]$,
as desired. Now let $1\<m<n-2$. Then by Corollary \ref{5.4}(1) and (6), and the
induction hypotheses, we have
$$\begin{array}{rl}
[P(m,0)]=&x^{-1}y[P(m+1,0)]-x^{-1}[P(m+2,0)]\\
=&x^{-1}y(\sum_{i=0}^{[\frac{n-m-1}{2}]}(-1)^i\frac{n-m-1}{n-m-1-i}\binom{n-m-1-i}{i}x^{m+1+i}y^{n-m-1-2i})[V(n,0)]\\
&-x^{-1}(\sum_{i=0}^{[\frac{n-m-2}{2}]}(-1)^i\frac{n-m-2}{n-m-2-i}\binom{n-m-2-i}{i}x^{m+2+i}y^{n-m-2-2i})[V(n,0)]\\
=&(\sum_{i=0}^{[\frac{n-m-1}{2}]}(-1)^i\frac{n-m-1}{n-m-1-i}\binom{n-m-1-i}{i}x^{m+i}y^{n-m-2i})[V(n,0)]\\
&+(\sum_{i=1}^{[\frac{n-m}{2}]}(-1)^{i}\frac{n-m-2}{n-m-1-i}\binom{n-m-1-i}{i-1}x^{m+i}y^{n-m-2i})[V(n,0)].\\
\end{array}$$
If $n-m$ is odd, then $[\frac{n-m-1}{2}]=\frac{n-m-1}{2}=[\frac{n-m}{2}]$, and hence
$$\begin{array}{rl}
&\sum_{i=0}^{[\frac{n-m-1}{2}]}(-1)^i\frac{n-m-1}{n-m-1-i}\binom{n-m-1-i}{i}x^{m+i}y^{n-m-2i}\\
&+\sum_{i=1}^{[\frac{n-m}{2}]}(-1)^{i}\frac{n-m-2}{n-m-1-i}\binom{n-m-1-i}{i-1}x^{m+i}y^{n-m-2i}\\
=&x^my^{n-m}+\sum_{i=1}^{[\frac{n-m}{2}]}(-1)^i(\frac{n-m-1}{n-m-1-i}\binom{n-m-1-i}{i}\\
&+\frac{n-m-2}{n-m-1-i}\binom{n-m-1-i}{i-1})x^{m+i}y^{n-m-2i}\\
=&\sum_{i=0}^{[\frac{n-m}{2}]}(-1)^i\frac{n-m}{n-m-i}\binom{n-m-i}{i}x^{m+i}y^{n-m-2i}.\\
\end{array}$$
If $n-m$ is even, then $[\frac{n-m-1}{2}]=\frac{n-m-2}{2}=[\frac{n-m}{2}]-1$, and hence
$$\begin{array}{rl}
&\sum_{i=0}^{[\frac{n-m-1}{2}]}(-1)^i\frac{n-m-1}{n-m-1-i}\binom{n-m-1-i}{i}x^{m+i}y^{n-m-2i}\\
&+\sum_{i=1}^{[\frac{n-m}{2}]}(-1)^{i}\frac{n-m-2}{n-m-1-i}\binom{n-m-1-i}{i-1}x^{m+i}y^{n-m-2i}\\
=&x^my^{n-m}+\sum_{i=1}^{[\frac{n-m}{2}]-1}(-1)^i(\frac{n-m-1}{n-m-1-i}\binom{n-m-1-i}{i}\\
&+\frac{n-m-2}{n-m-1-i}\binom{n-m-1-i}{i-1})x^{m+i}y^{n-m-2i}+(-1)^{\frac{n-m}{2}}2x^{\frac{n+m}{2}}\\
=&\sum_{i=0}^{[\frac{n-m}{2}]}(-1)^i\frac{n-m}{n-m-i}\binom{n-m-i}{i}x^{m+i}y^{n-m-2i}.\\
\end{array}$$
Therefore, $[P(m,0)]=(\sum_{i=0}^{[\frac{n-m}{2}]}(-1)^i\frac{n-m}{n-m-i}\binom{n-m-i}{i}x^{m+i}y^{n-m-2i})[V(n,0)]$.
\end{proof}

\begin{proposition}\label{5.7}
In $r_p(H_n(1,q))$ (or $r(H_n(1,q))$), we have
$$\begin{array}{c}
(\sum_{i=0}^{[\frac{n}{2}]}(-1)^i\frac{n}{n-i}\binom{n-i}{i}x^iy^{n-2i}-2)
(\sum_{i=0}^{[\frac{n-1}{2}]}(-1)^i\binom{n-1-i}{i}x^iy^{n-1-2i})=0.\\
\end{array}$$
\end{proposition}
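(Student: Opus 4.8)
The plan is to recognize the two bracketed factors as concrete elements of $r_p(H_n(1,q))$ and to reduce the identity to relations already at hand. First I would note that, by Lemma \ref{5.6}(1) applied with $m=n$, the second factor is exactly $[V(n,0)]$:
$$\sum_{i=0}^{[\frac{n-1}{2}]}(-1)^i\binom{n-1-i}{i}x^iy^{n-1-2i}=[V(n,0)].$$
Writing $A:=\sum_{i=0}^{[\frac{n}{2}]}(-1)^i\frac{n}{n-i}\binom{n-i}{i}x^iy^{n-2i}$ for the first factor, the assertion $(A-2)[V(n,0)]=0$ thus becomes the single equation $A[V(n,0)]=2[V(n,0)]$.

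The key device is the family of polynomials $C_k:=\sum_{i=0}^{[\frac{k}{2}]}(-1)^i\frac{k}{k-i}\binom{k-i}{i}x^iy^{k-2i}$ in $\mathbb{Z}[x,y]$, for which $A=C_n$ and, by Lemma \ref{5.6}(2), $[P(m,0)]=x^mC_{n-m}[V(n,0)]$ for $1\<m\<n-1$. These $C_k$ obey the three-term recursion
$$C_k=yC_{k-1}-xC_{k-2},$$
which, read coefficient by coefficient, is precisely the binomial identity $\frac{k}{k-i}\binom{k-i}{i}=\frac{k-1}{k-1-i}\binom{k-1-i}{i}+\frac{k-2}{k-1-i}\binom{k-1-i}{i-1}$ already established inside the proof of Lemma \ref{5.6}(2). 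In particular $A=C_n=yC_{n-1}-xC_{n-2}$.

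I would then multiply this recursion by $[V(n,0)]$. Since $n>2$, Lemma \ref{5.6}(2) is available for $m=1$ and $m=2$, and because $x$ is invertible with $x^n=1$ by Corollary \ref{5.4}(1), it yields $C_{n-1}[V(n,0)]=x^{-1}[P(1,0)]$ and $C_{n-2}[V(n,0)]=x^{-2}[P(2,0)]$. Using commutativity of $r_p(H_n(1,q))$ (Proposition \ref{5.5}) we obtain
$$A[V(n,0)]=yx^{-1}[P(1,0)]-x^{-1}[P(2,0)]=x^{-1}\bigl(y[P(1,0)]-[P(2,0)]\bigr).$$
By Corollary \ref{5.4}(4) the expression in parentheses equals $2x[V(n,0)]$, whence $A[V(n,0)]=2[V(n,0)]$, which is exactly the desired identity.

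The only delicate ingredient is the recursion $C_k=yC_{k-1}-xC_{k-2}$, i.e.\ the identification of the $C_k$ as Lucas-type polynomials; but since the supporting binomial identity is exactly the one dispatched within the proof of Lemma \ref{5.6}(2), this presents no genuine obstacle, and everything else is a short formal manipulation in the commutative ring $r_p(H_n(1,q))$ (and hence in $r(H_n(1,q))$, as $r_p(H_n(1,q))$ is a subring).
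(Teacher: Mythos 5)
Your proof is correct and is essentially the paper's own argument, reorganized: the paper equates two computations of $x^{-1}y[P(1,0)]$ (via Lemma \ref{5.6}(2) at $m=1,2$ and Corollary \ref{5.4}(4)) and then verifies, by the same binomial identity you use, that the resulting relation is $(C_n-2)[V(n,0)]=0$, finishing with Lemma \ref{5.6}(1). Your packaging of that identity as the Lucas-type recursion $C_k=yC_{k-1}-xC_{k-2}$ is a tidy presentational improvement, but the ingredients and logical content are identical.
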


\begin{proof}
By Lemma \ref{5.6}(2), we have
$$\begin{array}{c}
x^{-1}y[P(1,0)]=(\sum_{i=0}^{[\frac{n-1}{2}]}(-1)^i\frac{n-1}{n-1-i}\binom{n-1-i}{i}x^{i}y^{n-2i})[V(n,0)].\\
\end{array}$$
On the other hand, by Corollary \ref{5.4}(4) and Lemma \ref{5.6}(2), we have
$$\begin{array}{rl}
x^{-1}y[P(1,0)]=&x^{-1}[P(2,0)]+2[V(n,0)]\\
=&(\sum_{i=0}^{[\frac{n-2}{2}]}(-1)^i\frac{n-2}{n-2-i}\binom{n-2-i}{i}x^{i+1}y^{n-2-2i}+2)[V(n,0)]\\
=&(\sum_{i=1}^{[\frac{n}{2}]}(-1)^{i-1}\frac{n-2}{n-1-i}\binom{n-1-i}{i-1}x^{i}y^{n-2i}+2)[V(n,0)].\\
\end{array}$$
Therefore, one gets
$$\begin{array}{rl}
&(\sum_{i=0}^{[\frac{n-1}{2}]}(-1)^i\frac{n-1}{n-1-i}\binom{n-1-i}{i}x^{i}y^{n-2i})[V(n,0)]\\
=&(\sum_{i=1}^{[\frac{n}{2}]}(-1)^{i-1}\frac{n-2}{n-1-i}\binom{n-1-i}{i-1}x^{i}y^{n-2i}+2)[V(n,0)],\\
\end{array}$$
which is equivalent to
$$\begin{array}{rl}
&(\sum_{i=0}^{[\frac{n-1}{2}]}(-1)^i\frac{n-1}{n-1-i}\binom{n-1-i}{i}x^{i}y^{n-2i}\\
&-\sum_{i=1}^{[\frac{n}{2}]}(-1)^{i-1}\frac{n-2}{n-1-i}\binom{n-1-i}{i-1}x^{i}y^{n-2i}-2)[V(n,0)]=0.\\
\end{array}$$
Then a computation similar to the proof of Lemma \ref{5.6} shows that
$$\begin{array}{rl}
&\sum_{i=0}^{[\frac{n-1}{2}]}(-1)^i\frac{n-1}{n-1-i}\binom{n-1-i}{i}x^{i}y^{n-2i}
-\sum_{i=1}^{[\frac{n}{2}]}(-1)^{i-1}\frac{n-2}{n-1-i}\binom{n-1-i}{i-1}x^{i}y^{n-2i}-2\\
=&\sum_{i=0}^{[\frac{n}{2}]}(-1)^i\frac{n}{n-i}\binom{n-i}{i}x^{i}y^{n-2i}-2.\\
\end{array}$$
Thus, the proposition follows from Lemma \ref{5.6}(1).
\end{proof}

\begin{corollary}\label{5.8}
$\{x^ly^m|0\<l\<n-1, 0\<m\<2n-2\}$ is a $\mathbb Z$-basis of $r_p(H_n(1,q))$.
\end{corollary}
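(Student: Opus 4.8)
The plan is to show that the $n(2n-1)$ listed monomials span $r_p(H_n(1,q))$ as a $\mathbb{Z}$-module and then conclude by a rank count. First I would invoke Proposition~\ref{5.5}, which says that $r_p(H_n(1,q))$ is generated as a commutative ring by $x$ and $y$; thus every element is a $\mathbb{Z}$-linear combination of monomials $x^ly^m$ with $l,m\>0$. The exponent of $x$ is controlled by the relation $x^n=1$ from Corollary~\ref{5.4}(1), which lets me assume $0\<l\<n-1$. The exponent of $y$ is controlled by the relation $P(x,y)=0$ of Proposition~\ref{5.7}, where $P$ is the product of $f:=\sum_{i=0}^{[\frac{n}{2}]}(-1)^i\frac{n}{n-i}\binom{n-i}{i}x^iy^{n-2i}-2$ and $g:=\sum_{i=0}^{[\frac{n-1}{2}]}(-1)^i\binom{n-1-i}{i}x^iy^{n-1-2i}$.

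The observation I would verify carefully is that $P$, regarded as a polynomial in $y$ with coefficients in $\mathbb{Z}[x]$, is monic of degree $2n-1$. Indeed, the top $y$-terms of $f$ and $g$ are $y^n$ (coefficient $\frac{n}{n}\binom{n}{0}=1$) and $y^{n-1}$ (coefficient $\binom{n-1}{0}=1$), so $P=y^{2n-1}+(\text{terms of }y\text{-degree}<2n-1)$, and moreover every term of $P$ has $x$-degree at most $[\frac{n}{2}]+[\frac{n-1}{2}]=n-1$. Hence $P(x,y)=0$ rewrites $y^{2n-1}$ as a $\mathbb{Z}[x]$-linear combination of $y^m$ with $m<2n-1$. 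Multiplying this relation by powers of $y$ and iterating, while reducing powers of $x$ via $x^n=1$, I can rewrite an arbitrary monomial $x^ly^m$ as a $\mathbb{Z}$-linear combination of the monomials $x^ly^m$ with $0\<l\<n-1$ and $0\<m\<2n-2$. This establishes that $\{x^ly^m\mid 0\<l\<n-1,\,0\<m\<2n-2\}$ spans $r_p(H_n(1,q))$ over $\mathbb{Z}$.

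To finish I would count. The set has exactly $n(2n-1)$ elements, and by Corollary~\ref{5.2} the $\mathbb{Z}$-module $r_p(H_n(1,q))$ is free of rank $n\cdot n+(n-1)\cdot n=n(2n-1)$. A spanning set of cardinality equal to the rank of a free $\mathbb{Z}$-module is automatically a basis: the induced surjection $\mathbb{Z}^{n(2n-1)}\twoheadrightarrow r_p(H_n(1,q))\cong\mathbb{Z}^{n(2n-1)}$ is a surjective endomorphism of a finitely generated module over $\mathbb{Z}$, hence an isomorphism, so the listed monomials are also $\mathbb{Z}$-linearly independent. This yields the claimed $\mathbb{Z}$-basis.

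The hard part will be the monicity check underlying the $y$-reduction. Because $r_p(H_n(1,q))$ is a $\mathbb{Z}$-algebra, the division-with-remainder reduction must remain integral, so it is essential that the leading $y$-coefficient of $P$ be a unit of $\mathbb{Z}$; the fractions $\frac{n}{n-i}$ occurring in $f$ are harmless since each $\frac{n}{n-i}\binom{n-i}{i}$ is an integer, but it is precisely the top coefficient being $1$ that guarantees the reduction terminates within the stated range $0\<m\<2n-2$ without introducing denominators.
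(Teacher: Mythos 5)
Your proof is correct and follows essentially the same route as the paper: the paper likewise uses $x^n=1$ together with the relation of Proposition~\ref{5.7} --- rewriting $y^{2n-1}$ explicitly as a $\mathbb{Z}[x]$-combination of lower powers of $y$, which is exactly your monicity observation --- to get spanning, and then concludes by comparing with the rank $n(2n-1)$ from Corollary~\ref{5.2}. Your remark that the leading $y$-coefficient being $1$ is what keeps the reduction integral is a point the paper leaves implicit, but the argument is the same.
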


\begin{proof}
By Corollary \ref{5.4}(1), $x^n=1$. By Proposition \ref{5.7}, we have
$$\begin{array}{rl}
y^{2n-1}=&-\sum_{i=1}^{[\frac{n-1}{2}]}(-1)^i\binom{n-1-i}{i}x^iy^{2n-1-2i}\\
&-\sum_{i=1}^{[\frac{n}{2}]}(-1)^i\frac{n}{n-i}\binom{n-i}{i}x^iy^{2n-1-2i}+2y^{n-1}\\
&-(\sum_{i=1}^{[\frac{n}{2}]}(-1)^i\frac{n}{n-i}\binom{n-i}{i}x^iy^{n-2i}-2)
(\sum_{i=1}^{[\frac{n-1}{2}]}(-1)^i\binom{n-1-i}{i}x^iy^{n-1-2i}).\\
\end{array}$$
Then it follows from Proposition \ref{5.5} that $r_p(H_n(1,q))$ is generated,
as a $\mathbb Z$-module, by $\{x^ly^m|0\<l\<n-1, 0\<m\<2n-2\}$.
By Corollary \ref{5.2}, $r_p(H_n(1,q))$ is a free $\mathbb Z$-module of rank $n(2n-1)$,
and hence $\{x^ly^m|0\<l\<n-1, 0\<m\<2n-2\}$ is a $\mathbb Z$-basis of $r_p(H_n(1,q))$.
\end{proof}

\begin{theorem}\label{5.9}
Let $\mathbb{Z}[x,y]$ be the polynomial ring in two variables $x$ and $y$,
 and $I$ the ideal of $\mathbb{Z}[x,y]$ generated by $x^n-1$ and
 $$\begin{array}{c}
 (\sum_{i=0}^{[\frac{n}{2}]}(-1)^i\frac{n}{n-i}\binom{n-i}{i}x^iy^{n-2i}-2)
(\sum_{i=0}^{[\frac{n-1}{2}]}(-1)^i\binom{n-1-i}{i}x^iy^{n-1-2i}).\\
\end{array}$$
Then $r_p(H_n(1,q))$ is isomorphic to the quotient ring $\mathbb{Z}[x,y]/I$.
\end{theorem}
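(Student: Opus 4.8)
The plan is to realize $r_p(H_n(1,q))$ as the image of a polynomial presentation and then match $\mathbb{Z}$-ranks. By Proposition~\ref{5.5}, $r_p(H_n(1,q))$ is commutative and generated as a $\mathbb Z$-algebra by $x$ and $y$, so there is a unique ring epimorphism $\phi:\mathbb{Z}[x,y]\to r_p(H_n(1,q))$ with $\phi(x)=x$ and $\phi(y)=y$. First I would check that the two generators of $I$ lie in $\ker\phi$: that $x^n-1\in\ker\phi$ is Corollary~\ref{5.4}(1), and that the second generator lies in $\ker\phi$ is precisely the identity established in Proposition~\ref{5.7}. Hence $I\subseteq\ker\phi$, and $\phi$ factors as $\phi=\overline{\phi}\circ\pi$, where $\pi:\mathbb{Z}[x,y]\to\mathbb{Z}[x,y]/I$ is the canonical projection and $\overline{\phi}:\mathbb{Z}[x,y]/I\to r_p(H_n(1,q))$ is a ring epimorphism. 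It then remains only to prove that $\overline{\phi}$ is injective.

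The key observation is that the two relations defining $I$ furnish a normal form. Viewing the second generator of $I$ as a polynomial in $y$ with coefficients in $\mathbb{Z}[x]$, its leading term is $y^{2n-1}$ with coefficient $1$, obtained as the product of the $y^n$ term of the first factor and the $y^{n-1}$ term of the second, each of coefficient $1$. Because this relation is monic in $y$, one may repeatedly rewrite any monomial $x^l y^m$ with $m\geqslant 2n-1$ as a $\mathbb{Z}$-combination of monomials of strictly smaller $y$-degree, while the relation $x^n-1$ reduces the $x$-exponent modulo $n$. Consequently $\mathbb{Z}[x,y]/I$ is spanned as a $\mathbb{Z}$-module by the $n(2n-1)$ monomials $\{\overline{x}^l\overline{y}^m\mid 0\leqslant l\leqslant n-1,\ 0\leqslant m\leqslant 2n-2\}$.

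Finally I would conclude by a rank count. By Corollary~\ref{5.8}, the images $\overline{\phi}(\overline{x}^l\overline{y}^m)=x^l y^m$ of these $n(2n-1)$ spanning monomials form a $\mathbb{Z}$-basis of $r_p(H_n(1,q))$. Since a basis is $\mathbb{Z}$-linearly independent, any $\mathbb{Z}$-linear relation among the $\overline{x}^l\overline{y}^m$ in $\mathbb{Z}[x,y]/I$ would be carried by $\overline{\phi}$ to a relation among these basis elements and hence be trivial; thus the spanning set is in fact a free $\mathbb{Z}$-basis of $\mathbb{Z}[x,y]/I$. Therefore $\overline{\phi}$ sends a $\mathbb{Z}$-basis to a $\mathbb{Z}$-basis and is an isomorphism of rings, as claimed.

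The main obstacle is the middle step: verifying that $I$ produces exactly the claimed normal form. This hinges on the second generator being monic in $y$ of degree $2n-1$, so that the reduction is valid over $\mathbb{Z}$ rather than merely over $\mathbb{Q}$, and on ruling out any further collapse that could drop the count below $n(2n-1)$. The latter is supplied for free by the independence coming from Corollary~\ref{5.8}, so the entire argument reduces to the monicity check already implicit in the derivation of Corollary~\ref{5.8}.
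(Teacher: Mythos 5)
Your proof is correct and follows essentially the same route as the paper's: factor the epimorphism $\phi$ of Proposition~\ref{5.5} through $\mathbb{Z}[x,y]/I$ using Corollary~\ref{5.4}(1) and Proposition~\ref{5.7}, show the monomials $\overline{x}^l\overline{y}^m$ with $0\leqslant l\leqslant n-1$, $0\leqslant m\leqslant 2n-2$ span the quotient, and pull back the linear independence of Corollary~\ref{5.8} to conclude $\overline{\phi}$ is an isomorphism. Your explicit monicity-in-$y$ argument is just an unpacking of what the paper invokes via the proof of Corollary~\ref{5.8}, where the relation $y^{2n-1}=(\text{lower }y\text{-degree terms})$ is written out.
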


\begin{proof}
By Proposition \ref{5.5}, there is a ring epimorphism
$\phi: \mathbb{Z}[x,y]\ra r_p(H_n(1,q))$ given by
$\phi(x)=[V(1,1)]$ and $\phi(y)=[V(2,0)]$.
By Corollary \ref{5.4}(1) and Proposition \ref{5.7}, $\phi(I)=0$.
Hence $\phi$ induces a ring epimorphism
$\ol{\phi}: \mathbb{Z}[x,y]/I\ra r_p(H_n(1,q))$ such that $\phi=\ol{\phi}\circ\pi$,
where $\pi: \mathbb{Z}[x,y]\ra\mathbb{Z}[x,y]/I$ is the canonical projection.
Let $\ol{u}=\pi(u)$ for any $u\in\mathbb{Z}[x,y]$.
Then by the definition of $I$ and the proof of Corollary \ref{5.8}, one knows that
$\mathbb{Z}[x,y]/I$ is generated, as a $\mathbb Z$-module,
by $\{\ol{x}^l\ol{y}^m|0\<l\<n-1, 0\<m\<2n-2\}$.
For any $0\<l\<n-1$ and $0\<m\<2n-2$, we have
$\ol{\phi}(\ol{x}^l\ol{y}^m)=\ol{\phi}(\ol{x})^l\ol{\phi}(\ol{y})^m=\phi(x)^l\phi(y)^m
=[V(1,1)]^l[V(2,0)]^m$. By Corollary \ref{5.8},
$\{[V(1,1)]^l[V(2,0)]^m|0\<l\<n-1, 0\<m\<2n-2\}$ is a linearly independent set over $\mathbb Z$,
which implies that $\{\ol{x}^l\ol{y}^m|0\<l\<n-1, 0\<m\<2n-2\}$ is also a linearly independent set
over $\mathbb Z$. It follows that $\{\ol{x}^l\ol{y}^m|0\<l\<n-1, 0\<m\<2n-2\}$
is a $\mathbb Z$-basis of $\mathbb{Z}[x,y]/I$. Consequently, $\ol{\phi}$ is a $\mathbb Z$-module
isomorphism, and so it is a ring isomorphism.
\end{proof}

\rm
\begin{center}
\Large {\bf Acknowledgment}
\end{center}

This work is supported by NSF of China (No. 11571298)
and TAPP of Jiangsu Higher Education Institutions (No. PPZY2015B109).


\begin{thebibliography}{99}
\bibitem{AndrFanGarVen}
N. Andruskiewitsch, F. Fantino, G. A. Garcia and L. Vendramin,
On Nichols algebras associated to simple racks, in: Groups, Algebras and Applications,
in: Contemp. Math., vol.537, Amer. Math. Soc., 2011, pp.31¨C56.
\bibitem{Archer}
L. Archer, On certain quotients of the Green rings of dihedral 2-groups,
J. Pure \& Appl. Algebra 212 (2008), 1888-1897.
\bibitem{BenkPerWith}
G. Benkart, M. Pereira and S. Witherspoon, Yetter-Drinfeld modules under cocycle twists,
J. Algebra 324 (2010), 2990-3006.
\bibitem{BenCar}
D. J. Benson and J. F. Carlson, Nilpotent elements in the Green ring,
J. Algebra 104 (1986), 329-350.
\bibitem{BenPar}
D. J. Benson and R. A. Parker, The Green ring of a finite group, J. Algebra 87 (1984), 290-331.
\bibitem{BrJoh}
R. M. Bryant and M. Johnson, Periodicity of Adams operations on the Green
ring of a finite group, J. Pure Appl. Algebra 215 (2011), 989-1002.
\bibitem{chen99}
H. X. Chen, Skew pairing, cocycle deformations and double crossproducts,
Acta Math. Sinica, English Ser. 15 (1999), 225-234.
\bibitem{Ch1}
H. X. Chen, A class of noncommutative and
noncocommutative Hopf algebras-the quantum version, Comm. Algebra 27 (1999), 5011-5023.
\bibitem{Ch2}
H. X. Chen, Irreducible representations of a class of quantum doubles,
J. Algebra 225 (2000), 391-409.
\bibitem{Ch4}
H. X. Chen, Representations of a class of Drinfeld's doubles,
Comm. Algebra 33 (2005), 2809-2825.
\bibitem{Ch5}
H. X. Chen, The Green ring of Drinfeld double $D(H_4)$, Algebr. Represent. Theor.
17 (2014), 1457-1483.
\bibitem{ChenHassenSun}
H. X. Chen, H. E. S. Mohammed and H. Sun,
Indecomposable decomposition of tensor products of modules over Drinfeld Doubles of Taft algebras,
arXiv.org:1503.04393v3[math.RT].
\bibitem{ChVOZh}
H. X. Chen, F. Van Oystaeyen and Y. H. Zhang, The Green rings of Taft algebras,
Proc. Amer. Math. Soc. 142 (2014), 765-775.
\bibitem{ChenZhang}
H. X. Chen and Y. H. Zhang, Cocycle deformations and Brauer Groups, Comm. Algebra 35 (2007), 399-433.
\bibitem{Chin}
W. Chin, Special biserial coalgebras and representations of quantum SL(2), J. Algebra 353 (2012), 1-21.
\bibitem{Cib}
C. Cibils, A quiver quantum group, Commun. Math. Phys. 157 (1993), 459-477.
\bibitem{Cib99}
C. Cibils, The projective class ring of basic and split Hopf algebras,
K-Theory 17 (1999), 385-393.
\bibitem{Doi}
Y. Doi, Braided bialgebrs and quadratic bialgebras, Comm. Algebra 21 (1993), 1731-1749.
\bibitem{DoiTak}
Y. Doi and M. Takeuchi, Multiplication alteration by two-cocycles-the quantum version,
Comm. Algebra 22 (1994), 5715-5732.
\bibitem{Green}
J. A. Green, The modular representation algebra of a finite group, Ill. J.
Math. 6(4) (1962), 607-619.
\bibitem{GuiKasMas}
P. Guillot, C.Kassel and A. Masuoka, Twisting algebras using non-commutative torsors:
explicitcom-putations, Math. Z. 271 (2012), 789¨C818.
\bibitem{HTW}
I. Hambleton, L. R. Taylor and E. B. Williams, Dress induction and Burnside quotient Green ring,
Algebra Number Theory 3 (2009), 511-541.
\bibitem{Ka}
C. Kassel, Quantum groups, Springer-Verlag, New York, 1995.
\bibitem{LiHu}
Y. Li and N. Hu, The Green rings of the 2-rank Taft algebra and its two relatives twisted,
J. Algebra 410 (2014), 1-35.
\bibitem{LiZhang}
L. B. Li and Y. H. Zhang, The Green rings of the generalized Taft Hopf algebras,
Contemporary Mathematics 585 (2013), 275-288.
\bibitem{Liu}
G. Liu, On the structure of tame graded basic Hopf algebras, J. Algebra 299 (2006), 841-853.
\bibitem{Lo}
M. Lorenz, Representations of finite-dimensional Hopf algebras, J. Algebra 188 (1997), 476-505.
\bibitem{Maj92}
S. Majid, Algebras and Hopf algebras in braided categories, in: Advances in Hopf Algebras, Chicago, IL, 1992,
in: Lect. Notes Pure Appl. Math., vol.158, Dekker, New York, 1994, pp.55¨C105.
\bibitem{Maj}
S. Majid, Foundations of quantum group theory, Cambridge Univ. Press, Cambridge, 1995.
\bibitem{MajOec}
S. Majid and R. Oeckl, Twisting of quantum differentials and the Planck scale Hopf algebra,
Comm. Math. Phys. 205 (1999), 617-655.
\bibitem{Mon}
S. Montgomery,  Hopf Algebras and their actions on rings, CBMS Series in Math.,  Vol.
82, Amer. Math. Soc., Providence, 1993.
\bibitem{ObSch}
U. Oberst and H.-J. Schneider, Uber untergruppen endlicher algebraischer gruppen, Manuscripta Math. 8 (1973), 217-241.
\bibitem{Ringel}
C. M. Ringel, The representation type of local algebras, in: Representation of Algebra,
in: Lecture Notes in Math., Vol. 488, Springer, 1975, pp. 282-305.
\bibitem{Sw}
M. E. Sweedler, Hopf Algebras, Benjamin, New York, 1969.
\bibitem{Ta}
E. J. Taft, The order of the antipode of a finite-dimensional Hopf algebra, Proc. Nat. Acad. Sci. USA 68 (1971), 2631-2633.
\bibitem{Wakui}
M. Wakui, Various structures associated to the representation categories of eight dimensional
non-semisimple Hopf algebras, Algebr. Represent. Theory 7 (2004), 491-515.
\bibitem{With}
S. J. Witherspoon, The representation ring of the quantum double of a finite group, J. Algebra 179 (1996), 305-329.
\bibitem{ZWLC} Y. Zhang, F. Wu, L. Liu and H. X. Chen, Grothendieck groups of a class of quantum doubles,
Algebra Colloq. 15 (2008), 431-448.
\end{thebibliography}
\end{document}